\def\I{\mathcal{I}}
\def\nn{\nonumber}
\def\Lie{{\cal L}}
\newcommand{\vol}{\textnormal{vol}}
\def\f12{\frac 1 2}
\def\a{\alpha}
\def\b{\beta}
\def\ga{\gamma}
\def\Ga{\Gamma}
\def\ep{\epsilon}
\def\si{\sigma}
\def\Si{\Sigma}
\def\om{\omega}
\def\Om{\Omega}
\def\cL{\mathcal{L}}
\def\H{\mathcal{H}}
\def\Hb{\underline{\mathcal{H}}}
\def\E{\mathcal{E}}
\def\ab{\underline{\a}}
\def\Lb{\underline{L}}
\def\nab{\nabla}
\def\pa{\partial}
\def\les{\lesssim}
\def\G{\mathcal{G}}
\def\sl{\slashed}
\def\ud{\underline}
\def\dn{{\Delta_0}}
\def\pr{\partial}
\newcommand{\D}{\mbox{$D \mkern-13mu /$\,}}
\newcommand{\nabb}{\mbox{$\nabla \mkern-13mu /$\,}}
\newcommand{\J}{\mbox{$J \mkern-13mu /$\,}}
\newtheorem{thm}{Theorem}
\newtheorem{prop}{Proposition}
\newtheorem{lem}{Lemma}
\newtheorem{cor}{Corollary}
\newtheorem{remark}{Remark}
\begin{document}

\title{Global solution for massive Maxwell-Klein-Gordon equations}
\author{Sergiu Klainerman \and Qian Wang \and Shiwu Yang}
\AtEndDocument{\bigskip{\footnotesize%
  \addvspace{\medskipamount}
  \textsc{\quad \ \ Department of Mathematics, Princeton University, Princeton, New Jersey, USA} \par
  \textit{E-mail address}: \texttt{seri@math.princeton.edu} \par
  \addvspace{\medskipamount}
  \textsc{Oxford PDE Center, Mathematical Institute, University of Oxford, Oxford, OX2 6GG, UK} \par
  \textit{E-mail address}: \texttt{qian.wang@@maths.ox.ac.uk}\par
  \addvspace{\medskipamount}
  \textsc{Beijing International Center for Mathematical Research, Peking University, Beijing, China} \par
  \textit{E-mail address}: \texttt{shiwuyang@math.pku.edu.cn}
}}
\date{}

\maketitle

\begin{abstract}
We derive the asymptotic properties of the mMKG system (Maxwell  coupled with a massive Klein-Gordon scalar field), in the exterior
  of  the domain of influence of a compact set.  This complements the previous well known results,
  restricted to  compactly supported  initial conditions,  based on the so called hyperboloidal method.  That method takes advantage of  the  commutation properties of the   Maxwell and Klein Gordon  with the generators of the Poincar\'e group to  resolve the
   difficulties    caused by the fact that they have,  separately,    different   asymptotic properties.     Though the hyperboloidal  method is very robust and applies well
       to  other  related systems it has  the  well known  drawback  that it    requires compactly supported data.  In this paper we remove  this limitation
      based on a further extension of the vector-field  method adapted to the exterior region.  Our method  applies, in particular, to nontrivial charges.   The full problem could then be treated by patching together  the new  estimates in the  exterior  with  the hyperboloidal ones  in the interior.    This   purely physical space  approach  introduced here   maintains the  robust properties of the
      old method and can thus be applied to other situations such as    the coupled Einstein Klein-Gordon equation.

\end{abstract}

\section{Introduction}
In this paper, we study the small data global solutions to the massive Maxwell-Klein-Gordon equations on $\mathbb{R}^{3+1}$. To define the equations, let $A=A_\mu dx^\mu$ be a $1$-form. The covariant derivative associated to this 1-form is
\begin{equation*}
D_\mu =\pa_\mu+\sqrt{-1}A_\mu,
\end{equation*}
which can be viewed as a $U(1)$ connection on the complex line bundle over $\mathbb{R}^{3+1}$ with the standard flat metric $m_{\mu\nu}$. Then the curvature $2$-form $F$ is given by
\begin{equation*}
F_{\mu\nu}=-\sqrt{-1}[D_{\mu}, D_{\nu}]=\pa_\mu A_\nu-\pa_\nu A_\mu=(dA)_{\mu\nu}.
\end{equation*}
This is a closed $2$-form or equivalently $F$ satisfies the Bianchi identity
\begin{equation}
\label{bianchi}
 \pa_\ga F_{\mu\nu}+\pa_\mu F_{\nu\ga}+\pa_\nu F_{\ga\mu}=0.
\end{equation}
Note also that,
\begin{equation}\label{1.17.1.18}
(D_\mu D_\nu -D_\nu D_\mu)\phi=\sqrt{-1} F_{\mu\nu}\phi.
\end{equation}
The massive Maxwell-Klein-Gordon equations (mMKG) is a system for the connection field $A$ and the complex scalar field $\phi$:
\begin{equation}
 \label{EQMKG}\tag{mMKG}
\begin{cases}
\pa^\nu F_{\mu\nu}=\Im(\phi \cdot\overline{D_\mu\phi})=J[\phi]_\mu;\\
D^\mu D_\mu\phi-\phi=\Box_A\phi-\phi=0,
\end{cases}
\end{equation}
in which the mass of the scalar field is normalized to be 1.
It is well known that this system is gauge invariant in the sense that $(A-d\chi, e^{i\chi}\phi)$
solves the same equation \eqref{EQMKG} for any potential function $\chi$.

In this paper, we consider the Cauchy problem to \eqref{EQMKG} with initial data $(\phi_0, \phi_1, E, H)$:
\begin{equation*}
F_{0i}(0, x)=E_i(x),\quad \leftidx{^*}F_{0i}(0, x)=H_i(x),\quad \phi(0, x)=\phi_0,\quad D_0\phi(0, x)=\phi_1,
\end{equation*}
where $\leftidx{^*}F$ is the Hodge dual of the 2-form $F$. The (mMKG) equation system is an over-determined system which means that
 the data set has to satisfy the compatibility condition
\begin{equation*}
div(E)=\Im(\phi_0\cdot \overline{\phi_1}),\quad div (H)=0
\end{equation*}
with the divergence taken on the initial hypersurface $\mathbb{R}^3$.
For solutions of \eqref{EQMKG}, the total energy
as well as the total charge
\begin{equation}
\label{defcharge}
q_0=\frac{1}{4\pi}\int_{\mathbb{R}^3}\Im(\phi\cdot \overline{D_0\phi})dx=\frac{1}{4\pi}\int_{\mathbb{R}^3}div (E)dx
\end{equation}
are conserved. The existence of nonzero charge has a long range effect on the asymptotic behaviour of the solution as the electric field $E_i=F_{0i}$ has a tail $q_0r^{-3}x_i$ at any fixed time $t$.

\medskip

The Maxwell-Klein-Gordon system \eqref{EQMKG}     has  drawn extensive attention in the past. The pioneering works \cite{Moncrief1} and \cite{Moncrief2} of Eardley-Moncrief established a  global existence result to the more general Yang-Mills-Higgs equations with sufficiently smooth initial data in $\mathbb{R}^{3+1}$. For the massless case, when   the scalar field   has the same radiation properties as  the Maxwell field,  Klainerman-Machedon \cite{MKGkl}  obtained a  global  existence and regularity result  in  the energy space by introducing the celebrated bilinear estimates for null forms \cite{Kl-Ma1}. These type of estimates,   as well as  further  improvements made by various authors,  have     revolutionized  our understanding      of   optimal  well-posedness  for  classical field equations, such as Maxwell-Klein-Gordon, Yang-Mills, Wave Maps,  Einstein Field Equations etc,  whose ultimate  aim is to   provide  an effective  continuation argument\footnote{This goal has been  met successfully  for semilinear fields such as  Wave Maps, Maxwell-Klein-Gordon and Yang-Mills, but remains a major open problem for quasilinear ones  such as the Einstein  equations.}   consistent with the scaling properties of the equations, see \cite{Kl-1999} for a general discussion of the topic.
For such works, in the context of  massless  MKG,  we refer to \cite{YMkl}, \cite{MKGigor}, \cite{KriegerMKG4}, \cite{MKGtataru}, \cite{OhMKG4} and references therein.

\medskip

The long time asymptotic  behavior  of smooth  solutions  to the massless MKG equations  in  $\mathbb{R}^{3+1}$ is  also relatively   well understood. The vector field method introduced by Klainerman \cite{klinvar} is sufficiently robust to obtain the pointwise decay estimates for small solutions  with non-trivial charges, see e.g. \cite{LindbladMKG}.
 For the restricted  case of compactly supported initial data, one can also use the conformal compactification method\footnote{The conformal compactification method does not  in fact require compactly supported data but it imposes serious restriction on the rate of decay at infinity.  It requires in particular that $E=O(|x|^{-4})$ and thus excludes non-trivial charges.} \cite{fieldschrist}, \cite{Deivy:decayMKG:trivialout}.
 By combining  the Eardley-Moncrief result  with  the conformal method one can also  obtain  similar results for large compactly supported data \cite{Br-Seg}.   For  large initial  conditions,    with  nontrivial charges,   the first result  establishing
 quantitative energy flux decay of the solutions  are due to  S. Yang in  \cite{yangILEMKG}.  Finally,
  by combining the modified vector field method  of \cite{yangILEMKG},   \cite{yangMKG}    with the conformal compactification method,
  Yang-Yu in \cite{YangYu:MKGrough}
 and    \cite{YangYu:MKG}   have recently given a full description of the global asymptotic dynamics of massless MKG  equations,  with  large initial data and nontrivial charges,  in $\mathbb{R}^{3+1}$.

\medskip

The  decay properties of the  massive Maxwell-Klein-Gordon (mMKG)  equations  are far  less understood.
 In the case  of  an  uncoupled, nonlinear scalar  Klein Gordon equation \cite{Kl:KG:85},  Klainerman found a variation  of the vectorfield method, based on  the standard   hyperboloid foliation  of the interior of a  forward light  cone, which allows one to derive\footnote{A similar result  based on Fourier methods  and renormalization   is due to   J. Shatah
 \cite{shatah}.  Note that  \cite{shatah}  does not require    restrictions on the data. } global existence and    asymptotic behavior  of solutions corresponding to small, compactly supported  data. The hyperboloidal method can be easily  extended to  the
  mMKG equations provided that one maintains  the restriction   of the data, see\footnote{See also  \cite{Maria:timedecay:mMKG} for an integrated local energy estimate.  }   \cite{Maria:mMKG:small}.  The goal of this paper is to describe a new variation of the vectorfield  method  which allows us to  dispense of the  restriction to compactly supported data. We note that, in principle, the Fourier type methods recently  introduced in \cite{Ionescu:waveKG}  may also apply to the system considered here to derive comparable results.   We  believe  however  that the  purely geometric, physical space,  approach   taken here    has its own specific advantages  which     will prove to be of independent interest in various  applications.

The main difference between the massive   and  massless cases is due to the different global  asymptotic behavior  of solutions to  wave  and Klein-Gordon equations.
Recall that   solutions of the standard  wave equation in $\mathbb{R}^{3+1}$  concentrate   along outgoing null directions  and decay faster in the interior  while, to the contrary,  solutions
to the standard KG equation   concentrate in the interior and decay faster along null  directions.
This  difference can be neatly captured  by the  vectorfield method. Indeed while  both   $\square$ and   $\square-1$  commute   with the generators of the   Lorentz group $\Om_{\a\b}=x_\a\pr_\b-x_\b \pr_\a$,  it is only  the  former    which    has, in addition,   good commutation properties  with the   scaling vectorfield  $S= x^\a \pr_\a$
 as well as the  other conformal Killing vector fields of Minkowski space.  The method pioneered by  Klainerman in  \cite{Kl:KG:85} takes advantage  of the commutation properties
 of $\square-1$  with $\Om_{\a\b}$      by           considering   higher order, invariant,   energy estimates  along   the natural hyperboloidal   foliation, asymptotic to a fixed outgoing null cone,    spanned by these  vector fields.    The decay properties of solutions then follow by a  simple global Sobolev inequality on the  leaves of the foliation. The method
 has the advantage that it can be applied to  coupled nonlinear   wave and Klein-Gordon type equations  and, indeed,  it  has been later  widely used   in various such applications, most notable the recent works of \cite{Mayue:EKG} and \cite{Qian:EKG} on the coupled Einstein-Klein-Gordon equations, see also  \cite{Maria:mMKG:small} and \cite{Katayama:waveKl:12}.

 \medskip

 The obvious  limitation of the hyperboloidal approach is  that it  is naturally restricted to solutions supported in a forward light cone, i.e. for compactly\footnote{ As far as we know, the general case  can only be treated by   Fourier techniques, see \cite{shatah} for nonlinear Klein-Gordon equations  and see \cite{Ionescu:waveKG} for  wave-KG  coupled systems.}  supported initial  data.  The main contribution of this paper is to   provide     the missing piece,  i.e. to   construct
 solutions  to mMKG system in the complement of     a fixed light cone.         The full problem could then be treated by patching together  the new  estimates in the  exterior  with  the hyperboloidal ones  in the interior.
 \medskip

   A first glimpse of  what is needed to  derive the decay of solutions  to  KG  equations  in  the exterior of a light cone can be found in  \cite{Kl:remark:KG} where Klainerman gave a hierarchic multiplier-vectorfield   approach to recover the fast decay  along null directions.  Unfortunately  that approach  does not directly apply  to    the  Maxwell equation  piece of the mMKG system, and  we cannot rely on either  commutation with the  scaling vectorfield  or the hyperboloidal foliation.
   The  restriction  to the exterior of  an outgoing null  cone provides  however a  small but  crucial advantage in that we  can derive   energy-flux  estimates  with higher weights  in $|u|$  \big(the outgoing optical function $u=\frac 1 2 (t-r)\big) $ in such regions,  see  e.g. \cite{KlN2},  and that such estimates also apply to KG.         Combining these two ingredients is  not enough however, one needs to
   also derive subtle multiplier and commutation  estimates and an appropriate version of the null condition to make everything work.          We shall  summarize the main steps in more detail in the next subsection   after we introduce  some notation and  state the main result.

\subsection{Main results}

To state our main theorem, we first define some necessary notations.
We use the standard polar local coordinate system $(t, r, \om)$ of Minkowski space as well as the null coordinates $u=\frac{t-r}{2}$, $v=\frac{t+r}{2}$. We may use $u_+=1+|u|$. Under the coordinates $(u, v, \om)$ with $\om\in {\mathbb S}^2$, all points with fixed $(u, v)$ form a round sphere with radius $r$, which is denoted as $S{(u, v)}$. We now introduce a null frame $\{L, \Lb, e_1, e_2\}$ with
\[
L=\pa_v=\pa_t+\pa_r,\quad \Lb=\pa_u=\pa_t-\pa_r
\]
and $\{e_1, e_2\}$ an orthonormal basis of the sphere $S(u,v)$. We use $\D$ to denote the covariant derivative associated to the connection field $A$ on the sphere $S(u,v)$, which is defined by $\D_\nu=\Pi^\mu_\nu D_\mu,$ with $\Pi^\mu_\nu=\delta_\nu^\mu+\f12(L^\mu \Lb_\nu+ L_\nu\Lb^\mu).$
For any 2-form $G$, denote the null decomposition under the above null frame by
\begin{equation}\label{12.9.2.17}
\a_B[G]=G_{Le_B},\quad\underline{\a}_B[G]=G_{\Lb e_B},\quad \rho[G]=\f12 G_{\Lb L}, \quad \si[G]=G_{e_1 e_2},\quad B=1, 2.
\end{equation}
We interpret $\a, \ab$ as $1$-forms  tangent to the spheres $S(u,v)$. We refer to such quantities, more generally,  as $S$-tangent  tensors.

Without loss of generality we only prove estimates in the future, i.e., $t\geq 0$. Our analysis  focuses on  the exterior region $\{t+R\leq |x|\}$, for some fixed constant $R\ge 1$. We foliate  this region by the standard  double null foliation  defined be the level surfaces of  the optical functions $u$ and $v$.  Let $\mathcal{H}_{u}$  denote the outgoing null hypersurface $\{t-r=2u\}\cap\{t\geq 0, r\geq R\}$ and $\Hb_v$  the incoming null hypersurface $\{t+r=2v\}\cap\{t\geq 0, r\geq R\}$. We also use $\H_{u}^{v}$ and $\Hb_{v}^{u}$ to denote the truncated hypersurfaces
\begin{align*}
&\H_{u}^{v}:=\{(t, x): \,t-|x|=2u, \quad -2u\leq  t+|x|\leq 2v\},\quad \H_{u}:=\H_{u}^{\infty},\\
&\Hb_{v}^{u}:=\{(t, x):\,t+|x|=2v, \quad -2v\leq t-|x|\leq 2u\}.
\end{align*}
On the initial hypersurface $\{t=0\}$, define $\Sigma_0={\mathbb R}^3\cap \{r\ge R\}$ and
\begin{align*}
\Si_{0}^{u_1, u_2}:=\{(0, x)|-2u_1\leq |x|\leq -2u_2\},\quad \forall\, u_2<u_1\leq -\f12 R,\quad \Si_{0}^{u}=\Si_{0}^{u, -\infty}.
\end{align*}
In the exterior region, let $\mathcal{D}_{u}^{v}$ be the domain
bounded by $\H_{u}^{v}$, $\Hb_{v}^{u}$ and the initial hypersurface:
\[
\mathcal{D}_{u}^{v}:=\{(t, x):\, t-|x|\leq 2u,\quad t+|x|\leq 2v\}.
\]
We fix the convention that the standard volume elements on $\Si_0$,  $\H_u$, $\Hb_v$ and $\mathcal{D}_u^v$ are given by  $r^2 dr d\om$,  $r^2 d v d\om$, $r^2 du d\om$  and $dx dt$ respectively. These will be dropped whenever there is no   possible confusion,  i.e. $\int_{\Si_0} f=\int_{\Si_0} f r^2 dr d\om$,   $\int_{\H_u} f = \int_{\H_u}   f r^2 dv d\om$  etc.

\medskip

We denote by  $E[f, G](\Si)$  the  appropriate  energy-flux of the 2-form $G$ and  complex scalar field $f$ through the hypersurface $\Si$. For the hypersurfaces  of interest to us,
\begin{equation}
\label{generalizedenergy-norms}
\begin{split}
 E[f, G](\Sigma_0)&=\int_{\Sigma_0}(|G|^2+|Df |^2+|f|^2)dx,\quad |G|^2=\rho^2+|\si|^2+\frac{1}{2}(|\a|^2+|\underline{\a}|^2),\\
 E[f, G](\H_u)&=\int_{\H_u}(|D_L f|^2+|\D f|^2+|f|^2+\rho^2+\si^2+|\a|^2) ,\\
 E[f, G](\Hb_v)&=\int_{\Hb_v}(|D_{\Lb}f|^2+|\D f|^2+|f|^2+\rho^2+\si^2+|\ab|^2),
\end{split}
\end{equation}
where $\a, \ab, \rho,\sigma$ are the components of $G$ defined in (\ref{12.9.2.17}), $d\omega$ denotes the standard surface measure on the unit sphere ${\mathbb S}^2$,
and $D$ is the covariant derivative associated to the connection $A$.

On the initial hypersurface, define the chargeless part of the electric field $\tilde{E}$:
\[
\tilde{E}_i=E_i-q_0 r^{-2}\chi_{\{R\leq r\}} \om_i, \quad \mbox{ where } \omega_i=\frac{x_i}{r},
\]
where $q_0$ is the total charge defined in (\ref{defcharge}). Similarly define the chargeless part of the Maxwell field
\[
\tilde{F}=F-q_0 r^{-2}\chi_{\{t+R\leq r\}}dt\wedge dr.
\]

Our assumption on the initial data is that for some positive constant $1<\ga_0 <2$ the following weighted energy
\begin{equation}
\label{IDMKG}
\mathcal{E}_{k, \ga_0}= \sum\limits_{l\leq k}\int_{\Sigma_0}(1+r)^{\ga_0+2l}(|\bar D\bar D^l\phi_0|^2+|\bar D^l\phi_1|^2+|\bar D^l\phi_0|^2+|\bar\nabla^l \tilde{E}|^2+|\bar\nabla^l H|^2)dx
\end{equation}
is small for some positive integer $k\geq  2$, where $\bar D$ is the projection of $D$ to ${\mathbb R}^3\times \{t=0\}$. We denote by  $\nabla$  the Levi-Civita connection  in Minkowski space.  $\bar \nabla$ is the projection of  $\nabla$ to ${\mathbb R}^3\times \{t=0\}$. Note that the total charge is defined in terms of an integral on ${\mathbb R}^3$, it is not bounded by the energy $\E_{0,\ga_0}$ which is only defined for ${\mathbb R}^3\cap \{r\ge R\}$. Therefore it can be large even if $\E_{0,\ga_0}$ is small.

 We are ready to  state the main theorem of this paper.
\begin{thm}[Main theorem]\label{m.thm}
Consider the Cauchy problem for (\ref{EQMKG}) with the admissible initial data set $(\phi_0, \phi_1, E, H)$. There exists a positive constant $\ep_0$, depending only on $1<\ga_0<2$, $|q_0|$ and $\ep$ such that if $\E_{2,\ga_0}<\ep_0$,  the unique local solution  $(F, \phi)$ of (\ref{EQMKG}) can be globally extended \begin{footnote}{We refer the reader to \cite{Moncrief1, Moncrief2}  for the standard  global existence without  asymptotic behavior.}\end{footnote} in time on the exterior region $\{(t, x):\, t+R\leq |x|\}$, with a  fixed constant  $R\ge 1$.

\begin{itemize}
\item[(1)] The global solution verifies the following pointwise estimates,
\begin{align*}
r^2|\D\phi|^2+u_+^2|D_{\Lb}\phi|^2+r^2|D_L \phi|^2 &\leq C \E_{2,\ga_0} r^{-\frac{5}{2}+\ep}u_+^{\f12-\ga_0},\quad  |\phi|^2 \leq C \E_{2,\ga_0} r^{-3}u_+^{-\ga_0};\\
|\tilde{\rho}|^2+|\a|^2+|\si|^2&\leq C \E_{2,\ga_0} r^{-2-\ga_0}u_+^{-1},\quad  |\ab|^2\leq C \E_{2,\ga_0} r^{-2}u_+^{-\ga_0-1},
\end{align*}
where $\ep>0$ is any positive constant. Here $\tilde \rho=\rho[\tilde F]$ and the other curvature components are for the full Maxwell field $F$.
\item[(2)] The  following generalized energy estimates (see the notation in  \eqref{generalizedenergy-norms}) hold true
\begin{align*}
&E[D_Z^k\phi, \cL_Z^k\tilde{F}](\H_{u_1}^{-u_2})+E[D_Z^k\phi, \cL_Z^k\tilde{F}](\Hb_{-u_2}^{u_1})\le C (u_1)_+^{-\ga_0+2\zeta(Z^k)}\E_{2,\ga_0} ,\\
&\int_{\H_{u_1}^{-u_2}}r|D_L D_Z^k\phi|^2 +\int_{\Hb_{-u_2}^{u_1} }r(|\D D_Z^k\phi|^2+|D_Z^k\phi|^2) \le C(u_1)_+^{1-\ga_0+2\zeta(Z^k)}\E_{2,\ga_0},\\
&\int_{\H_{u_1}^{-u_2}}r^{\ga_0}|\a[\cL_Z^k \tilde{F}]|^2 +\iint_{\mathcal{D}_{u_1}^{-u_2}}r^{\ga_0-1}|(\a, \rho,\sigma)[\cL_Z^k\tilde{F}]|^2 \\
&\qquad \qquad+\int_{\Hb_{-u_2}^{u_1} }r^{\ga_0}(|\rho[\cL_Z^k\tilde{F}]|^2+|\si[\cL_Z^k\tilde{F}]|^2)  \le C (u_1)_+^{2\zeta(Z^k)}\E_{2,\ga_0}
\end{align*}
for all $u_2<u_1\leq -\frac{R}{2}$, $Z^k=Z_1 Z_2\ldots Z_k$ with $k\le 2$ and $Z_i\in \Gamma$, where $\Gamma$ is the set of  generators
  of the Poincar\' e group.\begin{footnote}{Here $D_Z^l=D_{Z^l}:=D_{Z_1} \ldots D_{Z_l},\,  \Lie_Z^l=\Lie_{Z^l}:=\Lie_{Z_1}\ldots \Lie_{Z_l}$, and $D_{Z^0}, \Lie_{Z^0}$ are both the identity map.} \end{footnote}The definition of the signature function $\zeta(\cdot)$ can be found in Section \ref{1.9.1.18}.
\end{itemize}
 The constant $C$ in (1) and (2) depends only on $\ga_0$, $|q_0|$ and $\ep$.
\end{thm}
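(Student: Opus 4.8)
The plan is a continuity argument in the exterior region $\{t+R\le|x|\}$. Starting from the local solution produced by the Eardley--Moncrief theory, I would work on a truncated domain $\mathcal{D}_{u_1}^{-u_2}$ with $u_1$ ranging over $[-U_0,-\tfrac R2]$ and \emph{assume} that the weighted energy and spacetime norms of part (2), together with the pointwise bounds of part (1), hold with the constant $C$ replaced by a large constant $\Delta$; the goal is to recover the same estimates with a constant $\lesssim 1$ independent of $U_0$ whenever $\E_{2,\ga_0}<\ep_0$ is small. Local existence together with such an improvement then propagates the estimates to the entire exterior region. Since inside $\mathcal{D}_{u_1}^{-u_2}$ (with $u_1\le-\tfrac R2$) the subtracted field $q_0 r^{-2}\,dt\wedge dr$ is a static vacuum Maxwell solution, the chargeless field $\tilde F$ solves the \emph{same} system $\pa^\nu\tilde F_{\mu\nu}=J[\phi]_\mu$ there, now with chargeless data $\tilde E$ on $\Sigma_0$ for which the weighted energy $\E_{k,\ga_0}$ is finite and small; the cut-off only enters the initial-data bookkeeping, not the evolution.

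From the bootstrap energies the pointwise estimates of part (1) follow by weighted Klainerman--Sobolev inequalities adapted to the double null foliation: one integrates the $Z^k$-commuted fluxes ($k\le 2$, $Z_i\in\Gamma$) along the null hypersurfaces and applies Sobolev embedding on the spheres $S(u,v)$, keeping track of the $r$- and $u_+$-weights supplied by part (2). The statement controls $\tilde\rho=\rho[\tilde F]$ rather than $\rho[F]$ precisely because the slowly decaying Coulomb tail $q_0 r^{-2}$ is not captured by any finite weighted energy and must be exhibited explicitly.

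The core is to recover the energy hierarchy of part (2). For the massive scalar field I would contract the energy--momentum tensor $Q[\phi]_{\mu\nu}=\Re(\overline{D_\mu\phi}\,D_\nu\phi)-\tfrac12 m_{\mu\nu}\big(D^\alpha\phi\,\overline{D_\alpha\phi}+|\phi|^2\big)$ with a hierarchy of multipliers --- $\pa_t$ for the basic flux (which reproduces $E[f,G](\H_u)$ and $E[f,G](\Hb_v)$ exactly), then $r$-weighted and $u_+^{\ga_0}$-weighted multipliers of the type used by Klainerman for exterior Klein--Gordon decay to generate the extra $r$- and $(u_1)_+$-weights --- and commute $\Box_A\phi=\phi$ with strings $D_Z^k$. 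For the chargeless Maxwell field I would use $Q[\tilde F]_{\mu\nu}=\tilde F_{\mu\alpha}\tilde F_\nu{}^{\alpha}-\tfrac14 m_{\mu\nu}\tilde F_{\alpha\beta}\tilde F^{\alpha\beta}$ with the $r^{\ga_0}$-weighted multiplier (the exterior analogue of the $r^p$-weighted energy method), commuting with $\cL_Z^k$; this produces the $\H$-flux of $r^{\ga_0}|\a|^2$, the spacetime integral of $r^{\ga_0-1}|(\a,\rho,\si)|^2$, and the $\Hb$-flux of $r^{\ga_0}(|\rho|^2+|\si|^2)$. All error terms are at least quadratic: the interaction terms couple $J[\phi]_\mu=\Im(\phi\,\overline{D_\mu\phi})$ with derivatives of $\phi$; the commutators produce curvature contractions of the schematic form $\cL_Z^{\le k}\tilde F\,(Z,\,D_Z^{\le k}\phi)$ plus lower order; and the static subtraction contributes no interior source. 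Estimating these via the null decomposition --- exploiting that $\a,\rho,\si$ decay faster in $r$ and $u_+$ than $\ab$, and that the genuinely dangerous pairings against $\ab$ are suppressed by the exterior null structure --- together with the bootstrap pointwise bounds, one gains a factor $\sqrt{\E_{2,\ga_0}}\lesssim\sqrt{\ep_0}$, closing the bootstrap; the signature function $\zeta(Z^k)$ records exactly the $u_+$-weight lost at each commutation with a Lorentz boost.

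The hard part is the commutation with the Lorentz boosts $\Omega_{0i}=x_i\pa_t+t\pa_i$. In the exterior their $\Lb$-component has size $\sim|u|$, so $[\Box_A,D_{\Omega_{0i}}]\phi$ contains terms of schematic size $|u|\,\ab[F]\,D_L\phi$ (and analogues involving $\phi$ itself), which lie right at the threshold of integrability along $\H_u$ and $\Hb_v$ --- the $\ep$-loss in the pointwise rate $r^{-\frac52+\ep}$ is the residue of this borderline behaviour. Making the scheme close requires (i) the sharp exterior weights in $u_+\sim r-t\gtrsim R$ built into the multiplier hierarchy, available only because we work away from the light cone; (ii) an exterior-adapted form of the null condition that keeps the bad pairings against $\ab$ out of the worst error terms; and (iii) a careful ordering of the hierarchy so that the top-order commuted energies (largest $\zeta$) are controlled using only the \emph{lower-order} pointwise bounds, avoiding loss of derivatives. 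Patching these exterior estimates to the interior hyperboloidal ones along $\H_{-R/2}$ then yields the global picture, but that step lies beyond the present theorem.
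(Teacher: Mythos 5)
Your high-level blueprint --- bootstrap in the exterior, null-adapted Sobolev for pointwise decay, $r$-weighted multipliers, an exterior null condition protecting $\ab$, and signature bookkeeping for the boosts --- is the paper's strategy, but two of your mechanistic claims would fail as stated. The most serious is the assertion that ``the static subtraction contributes no interior source.'' Running the energy identity with $\G=\tilde F$ still keeps the \emph{full} connection $F=dA$ inside the covariant derivatives of $\phi$, so the interaction terms $X^\nu F_{\nu\mu}J[\phi]^\mu-\tilde F_{X\ga}J^\ga$ do not cancel: the Coulomb tail survives as $q_0\,r^{-2}\,\Im(\phi\cdot\overline{D_{\pa_r}\phi})$ in the $\pa_t$-identity and as analogous $q_0$-weighted pieces in the $r^pL$-identity and in the commutators $[\Box_A-1,D_Z]$. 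Since $q_0$ is \emph{not} assumed small, these cannot be absorbed by smallness of $\E_{2,\ga_0}$; they force the Gronwall-type argument and the careful $q_0$-tracking that the authors single out (see the proofs of Lemma \ref{prop:Q:need:r:all} and Proposition \ref{1.27.3.18}). Your scheme, as written, simply drops this term.

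Second, the paper does \emph{not} use ``$u_+^{\ga_0}$-weighted multipliers of the type used by Klainerman for exterior Klein--Gordon decay'' --- the introduction explicitly notes that that hierarchic approach does not transfer to the Maxwell sector. Only $\pa_t$ and $r^pL$ (with $p=1$ for the scalar, $p=\ga_0$ for Maxwell) appear as multipliers; the $(u_1)_+^{-\ga_0}$ decay is generated \emph{for free} because the initial-data weight $(1+r)^{\ga_0}$, restricted to $\Si_0^{u_1,u_2}$ where $r\ge -2u_1\sim(u_1)_+$, converts directly into a $(u_1)_+$-power. This replaces the multiplier you propose and is what really exploits the exterior geometry. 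Finally, a smaller point: the borderline boost term is not schematically $|u|\,\ab[F]\,D_L\phi$; the $\Lb$-component of $\Omega_{0i}$ produces $u_+(\rho\,D_{\Lb}\phi+\ab\,\D\phi)$-type terms, and the paper's null-structure lemma (Lemma \ref{lem:Est4commu:1}) is precisely that $\ab$ never meets $D_{\Lb}\phi$; you should also invoke the double-commutator decomposition of $(\Box_A-1)D_XD_Y\phi$ into trilinear forms $Q$ plus a cubic term (Lemmas \ref{lem:Est4commu:id:2}, \ref{cor1}), which is the algebraic step that makes the second-order null structure visible.
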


\begin{remark}
If $\ga_0\geq 2$, pointwise decay estimates as well as weighted energy estimates can be improved according to a  slightly modified argument. Moreover if the initial data belong to higher order weighted Sobolev space ($k>2$), we then can also derive the associated higher order weighted energy estimates. Note that the theorem can be adapted to large data, provided that $R$ is sufficiently large. Indeed, if the total energy in ${\mathbb R}^3$ is bounded, the energy $\E_{2,\ga_0}$ defined on $\{r\ge R\}$ can be sufficiently small.
\end{remark}
\begin{remark}
The  estimates   mentioned above   can be used as
boundary conditions on the outgoing null boundary  of the causal  future of a sufficiently large compact set
 where one can apply   the hyperboloidal  approach  of   \cite{Maria:mMKG:small}  to   derive  global asymptotic   results  for general initial conditions. \end{remark}

\begin{remark}
Our approach can be applied to (mMKG) equations in Minkowski spacetimes of other dimensions. It can also be used to remove the restriction of the compactly supported Cauchy data for the  $2$-D problems treated by the hyperboloid foliation such as  \cite{Willie}. More importantly our  approach can also be used to study quasilinear wave and Klein-Gordon systems, such as the model problem in \cite{Ionescu:waveKG} as well as  the Einstein-Klein-Gordon equations\footnote{ The last two authors of this paper  are  in  the process of  completing   this  goal.}.
\end{remark}

\begin{remark}
As mentioned earlier  the exterior region provides considerable flexibility  in  allowing   high weights in  $u_+$ for the various flux integrals used here, as  long as  the initial  data  has sufficient decay in $r$. An obvious restriction however is given by the charge which limits the  decay  in $r$ for the  $\rho[F]$ component of the data. The way to overcome such restriction is to define the energy of the data for $\tilde F$, the chargeless part of $F$.
\end{remark}

We now briefly summarize our method.  As expected,  if  we   apply the standard   multiplier approach to the linear Klein-Gordon equation with the multiplier $rL$, the energy identity generates  the bulk term
\[
B:=\int_{\{r\ge R+t \}}|\phi|^2 dx dt
\]
  with  an unfavorable  sign.
 Note that  the flux of  the standard energy estimate provides a  bound  for $ \int_{\H_u}|\phi|^2 $ in terms of the initial energy.  One can improve this, in the exterior region, to  a bound of
 $u_+^{\ga} \int_{\H_u}|\phi|^2$  in terms of  the corresponding weighted  energy norm of the data. Thus, by  integrating $ \int_{\H_u}|\phi|^2  $   with respect to $u$ (for $\ga>1$),   we can control the  bulk  term $B$,  as long as the  initial weighted  energy is bounded.
  This  multiplier approach,  combined  with      commutation with   $\Om_{\a\b}$ and standard translations, $T_\a=\pr_\a$,   is  thus  sufficient to derive the desired pointwise decay estimates for solutions of linear   Klein-Gordon equations in the exterior  and can also be adapted to   control solutions to nonlinear  KG equations.

  \medskip

  Now  consider the  mMKG system.  One can show, as  before,   that the total  energy flux through the outgoing null hypersurface $\H_u$ in the exterior region   decays, sufficiently fast,  with respect to  $u$, i.e.
 \[
 \int_{\H_u}(|D_L\phi|^2+|\D\phi|^2+|\phi|^2 )\les u_+^{-\ga_0}\E_{0, \ga_0}, \qquad \ga_0>1,
 \]
 provided that the  initial data is  bounded in the corresponding  weighted energy space $\E_{0, \ga_0}$. The key observation, once more, is that this energy flux decay is sufficient to bound the only unfavorable term
 \[
 \int_{u\leq u_0}(\int_{\H_u}|\phi|^2) du\les {u_0}_+^{1-\ga_0}\E_{0, \ga_0},
 \]
 generated  when using the vector field $rL$ as multiplier for the coupled system\footnote{ The bad term is due, of course, to the  Klein-Gordon  component of the system.}.

 \medskip

 Things become  a lot   more complicated  when we try to derive  the higher order derivative estimates due to the complexity of the  quadratic  error terms generated  in the process. At the top level, when we commute with two vector fields $X, Y$ the main error terms are due    to the commutator\footnote{ See   (\ref{12.7.6.17}) for a  precise form of  $Com$.} $Com:=[D_X D_Y, \Box-1]\phi$.
  We make  use of  the technique of  double commutator in  \cite{yangMKG} to decompose $Com$   into a  combination of trilinear forms, such as
$Q(F, D_X \phi, Y)$  and $Q(\Lie_X F, \phi, Y)$, where the definition of $Q$ can be found in (\ref{12.9.1.17}). To control  $Com$  we  need to  take into account
that   $Q$ verifies the null condition with respect to the fields\footnote{This means roughly that the radiative components of  $F$, i.e. $\underline{\a}[F]$,  does not interact with $D_{\Lb}\phi$.}  $(\phi, F)$. Consider the case  when $Y=\Omega_{0i}$ is a  boost, as a typical   example.
In that case we can estimate, schematically, (see Lemma \ref{lem:Est4commu:1} for the complete inequalities),
\begin{align}\label{1.2.1.18}
|Q(F, D_X\phi, Y)|&\les r |F\cdot D_L D_X \phi|+\cdots;\quad  |Q(\Lie_X F,\phi,Y)|\les r|\Lie_X F\cdot D_L \phi|+\cdots.
\end{align}

To bound the standard energy flux for $E[D_X D_Y \phi](\H_u)+E[D_X D_Y \phi](\Hb_v)$ (which can be found in the first inequality in Theorem \ref{m.thm} (2)),  we need to control \begin{footnote} {For any one-form $V$, we fix the convention that $V_0=V(\pa_t)$. Lifting it by Minkowski metric gives $V^0$.}
 \end{footnote}
\begin{equation}\label{1.2.3.18}
 \int_{\{r>t+R\}}\big(|Q(F, D_X \phi, Y)|+|Q(\Lie_{X} F, \phi, Y)|\big)|D_0 D_X D_Y \phi| dx dt =I_1+I_2.
\end{equation}
     Consider the first term $I_1$ in (\ref{1.2.3.18}) in view of (\ref{1.2.1.18}) and decomposing $2D_0=D_L +D_{\Lb}$, we need to bound
\begin{equation*}
I_1 \les \int_{\{r>t+R\}} r|F\cdot D_L D_X \phi|(|D_{\Lb} D_X D_Y \phi|+|D_L D_X D_Y\phi|) dx dt.
\end{equation*}
For simplicity, we only discuss the treatment of the chargeless part of $F$, for which we have
\begin{align*}
I_1   \les \|r^{-\f12-\ep}(|D_{\Lb} D_X D_Y \phi|+|D_L D_X D_Y|)\|_{L^2(\{r>t+R\})}\|r^{\frac{3}{2}+\ep} \tilde F \cdot D_L D_X\phi\|_{L^2\{r>t+R\}}+\cdots
\end{align*}
 The first factor can be bounded by the energy fluxes on  $\Hb_v$ and $\H_u$, followed with  direct integration. To bound the second factor we note that,  since
 the  radiative $\ab$ component of $\tilde F$ decays only  like  $ r^{-1}$ in $r$, in view of  its expected decay in Theorem \ref{m.thm} (1),  we write,
 \begin{align*}
 \int_{\{r>t+R\}} r^{3+2\ep}|\tilde F|^2 | D_L D_X\phi |^2&\les   \E_{2,\ga_0}\int_{\{r>t+R\}}r^{-1+2\ep} u_+^{-\ga_0-1}| r D_L D_X\phi |^2 \\
 &\les\E_{2,\ga_0} \sup_{u \le  -\frac 1 2 R}  \|rD_L D_X\phi\|_{L^2(\H_u)} ^2,
 \end{align*}
 which  requires us to control
 \begin{equation*}
     \|rD_L D_X\phi\|_{L^2(\H_u)}.
     \end{equation*}
  We can repeat the above estimate  in the region $\{r\ge t-2u_0\}$ with $u_0\le -\f12 R$ so as to keep track of the negative power of $u_0^+$ in the bounds.
To  derive the improved  bound for   $\|rD_L D_X\phi\|_{L^2(\H_u)}$,  we note the  simple algebraic identity, $$v|L f|\les \sum_i|\Omega_{0i}f|+\sum_{1\le i<j\le 3}|\Omega_{ij}f|+|u| |\pa f| $$  for any smooth  function $f$. By using the standard energy flux on $\H_u$ and  $r\le 2v$ we have
\begin{equation}\label{1.2.4.18}
\|r D_L D_X \phi\|^2_{L^2(\H_u)}\les \sum_{\mu, \nu}\|D_{\Omega_{\mu\nu}}D_X \phi\|^2_{L^2(\H_u)}+u_+^2 \|D D_X\phi\|^2_{L^2(\H_u)}.
\end{equation}
 In view of  the top-order standard energy flux bound, we expect that $$\|D D_X \phi\|^2_{L^2(\H_u)}\les u_+^{-\ga_0-2} \E_{2,\ga_0}. $$ Thus,  estimating  the other terms on the right of (\ref{1.2.4.18}) by $u_+^{-\ga_0}\E_{2, \ga_0}$,  we  obtain the desired estimate
$$
\|r D_L D_X \phi\|^2_{L^2(\H_u)}\les u_+^{-\ga_0}\E_{2,\ga_0}.
$$

To treat the term $I_2$ in (\ref{1.2.3.18}), we need  the pointwise control for $D\phi$.  It is worthwhile to point out that the weighted energy control (see the second inequality in Theorem \ref{m.thm} (2)) allows us to obtain a set of strong pointwise decay for the scalar field. However to treat  the leading term generated  by  (\ref{1.2.1.18}), we need a further improvement for the pointwise decay  of  $D_L \phi$. This is obtained by making use of  the same algebraic identity as above   combined with   a  pointwise estimate\footnote{Here  $Z$ is a  generator
  of the Poincar\' e group.} for $D_Z \phi$, with the help of the top order weighted  energy for  $\phi$  and global Sobolev inequalities.

To control the top order weighted estimate for the scalar field
we proceed in the same manner  except that we need to make use  of
the stronger decay  in $u_+$ in the treatment of the error terms, see
Proposition \ref{1.27.3.18}.  To derive the  weighted energy estimates\footnote{The standard energy estimates are,  of course,  much simpler. } for the  Maxwell field  we use again the multiplier method   based  on   the vector fields  $r^p L$. In this case    however we can  choose  $1\leq p\leq \ga_0<2$. The error terms  generated  in this case  are   due   to  the inhomogeneous terms  of the Maxwell equations  which  depend  quadratically   on  the scalar field and its derivatives. This is, roughly,  the reason why  we can get stronger  r-weighted estimates on the Maxwell field $F$  than on the KG field $\phi$.  Our main theorem then follows by using a standard bootstrap argument.

At last, we remark that our result does not require small charges. This is achieved by carefully separating the terms related to the charge in the analysis.  We direct the  readers   interested   in this aspect of our result to
 the proof of Lemma \ref{prop:Q:need:r:all} and Proposition \ref{1.27.3.18}.

 \medskip

\textbf{Acknowledgments.}  The first author    is partially supported by  the NSF grant   DMS 1362872.       The second author would like to thank the Zilkha Trustees for offering partial travel support. The third author is  partially supported by NSFC-11701017.

\section{Preliminaries and energy identities}
\label{notation}

The proof of the main theorem is based on an energy method.  We will mainly analyze the energy fluxes through various kinds of null hypersurfaces.
To derive the pointwise bound of the solution, we need a  global Sobolev embedding inequality adapted to the null hypersurfaces.  In this section, we establish a preliminary Sobolev inequality and energy identities. In order to control higher order energy fluxes, we need to commute various differential operators with the operator  $\Box_A-1$. Such commutators are treated in the last subsection.

Without loss of generality, we may assume the positive constant $\ep$ in the Main Theorem verifies $0<\ep\le \frac{1}{10}(\ga_0-1)$. We make a convention in the sequel that the implicit constant in $A\les B$ depends only on $\ep$, $\ga_0$ and $|q_0|$.
\subsection{Sobolev inequalities}
We may write the integral of a real scalar function $f$ on any surface $S$ as $\int_{S}f$, where the volume element on the surface $S$ is omitted  for simplicity.
We have the following Sobolev inequality.
\begin{lem}
\label{lem:globalSobolev}
\begin{itemize}
\item[(1)]
For any smooth function $f$ and constants verifying the relation $2\ga=\ga_0'+2\ga_2$, we have, for all $(u, v),  \,  -u<v\le v_*$,
\begin{equation}
\label{eq:globalSobolev:massless}
\begin{split}
\sup\limits_{S{(u, v)}}|r^{\ga} f|^4
&\les \sum\limits_{l\leq 1, 1\le i<j\le 3} \int_{S(u, -u)}|r^\ga \Om_{ij}^l f|^4 r^{-2}+\sum\limits_{k\leq 2,1\le i<j\le 3}\int_{\H_u^{-u, v_*}}r^{2\ga_2} |\Om_{ij}^k f|^2 r^{-2}\\
&\times \sum\limits_{l\leq 1, 1\le i<j\le 3 }\int_{\H_u^{-u, v_*}} r^{\ga_0'}|L \Om^l_{ij}(r^{\ga} f)|^2 r^{-2}.
\end{split}
\end{equation}
\item[(2)]The same estimate holds true for any smooth complex scalar field $\phi$ with covariant derivative $D$ associated to the connection field $A$.
\item[(3)]
 For any $S$-tangent  tangent tensor field $H$, we have, for all $(u, v),  \,  -u<v\le v_*$,
\begin{align}
\notag
\sup\limits_{S{(u, v)}}|r^{\ga} H|^4
& \les \sum\limits_{l\leq 1, 1\le i<j\le 3} \int_{S(u, -u)}|r^\ga {\sl\cL}_{\Om_{ij}}^l H|^4 r^{-2}+\sum\limits_{k\leq 2,1\le i<j\le 3}\int_{\H_u^{-u, v_*}}r^{2\ga_2} |{\sl\cL}^k_{\Om_{ij}} H|^2 r^{-2}\\
\label{12.25.2.17}
&\cdot\sum\limits_{l\leq 1, 1\le i<j\le 3 }\int_{\H_u^{-u, v_*}} r^{\ga_0'}|{\sl\nab}_L {\sl\cL}_{\Om_{ij}}^l (r^{\ga} H)|^2 r^{-2},
\end{align}
where, for any vector field $Z$,  ${\sl\cL}_Z H$ denotes the projection of $\cL_Z H$ to the sphere $S(u,v)$. Here $\sl{\nabla}_L H$ denotes the projection of $\nabla_L H$ to $S(u,v)$.
\end{itemize}
\end{lem}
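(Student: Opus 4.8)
The plan is to combine two classical ingredients: a two--dimensional Gagliardo--Nirenberg--Sobolev inequality on the round spheres $S(u,v)$, and a transport argument (fundamental theorem of calculus) along the null generators $L=\pa_v$ of $\H_u$. For the first, recall that the rotation vector fields $\Om_{ij}$ are tangent to each sphere $S(u,v)$, span its tangent space, and satisfy $\Om_{ij}r=0$; hence for a smooth real $S$--scalar $h$ one has the scale--invariant estimate
\[
\sup_{S(u,v)}|h|^4\les\Big(\sum_{l\le1, i<j}\int_{S(u,v)}|\Om_{ij}^l h|^2 r^{-2}\Big)\Big(\sum_{k\le2, i<j}\int_{S(u,v)}|\Om_{ij}^k h|^2 r^{-2}\Big),
\]
obtained by transplanting $H^{3/2}({\mathbb S}^2)\hookrightarrow L^\infty$ together with the interpolation $\|h\|_{H^{3/2}}^2\les\|h\|_{H^1}\|h\|_{H^2}$, and writing angular derivatives through the $\Om_{ij}$ (the $r^{-2}$ factors turn the induced measure into $d\om$, so the inequality is weight--free at this stage). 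The relation $2\ga=\ga_0'+2\ga_2$ has not been used yet; it enters only in the Cauchy--Schwarz of the transport step below.

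\textbf{Part (1): the scalar inequality.} Apply the sphere inequality to $h=r^\ga f$, noting $\Om_{ij}^l(r^\ga f)=r^\ga\Om_{ij}^l f$ because $\Om_{ij}r=0$. It remains to bound, for fixed $(u,v)$ with $-u<v\le v_*$, each factor $\int_{S(u,v)}|\Om_{ij}^l(r^\ga f)|^2 r^{-2}$ by a contribution on $S(u,-u)$ plus cone integrals on $\H_u^{-u,v_*}$. For any $S$--scalar $g$, since $L=\pa_v$ and $[L,\Om_{ij}]=0$,
\[
\frac{d}{dv}\int_{S(u,v)}|g|^2 r^{-2}=\int_{S(u,v)}2\,\Re(\bar g\, Lg)\, r^{-2}
\]
(no Jacobian term, the normalised measure being $d\om$). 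Integrating from $v'=-u$, using $Lg=\Om_{ij}^l L(r^\ga f)$ for $g=\Om_{ij}^l(r^\ga f)$, and Cauchy--Schwarz --- allotting the weight $r^{-\ga_0'}$ to the undifferentiated factor (so that $r^{-\ga_0'}|\Om_{ij}^l(r^\ga f)|^2=r^{2\ga_2}|\Om_{ij}^l f|^2$ by $2\ga=\ga_0'+2\ga_2$) and $r^{\ga_0'}$ to the $L$--differentiated one --- yields
\[
\int_{S(u,v)}|\Om_{ij}^l(r^\ga f)|^2 r^{-2}\les\int_{S(u,-u)}|\Om_{ij}^l(r^\ga f)|^2 r^{-2}+\Big(\int_{\H_u^{-u,v_*}}\!\!\!r^{2\ga_2}|\Om_{ij}^l f|^2 r^{-2}\Big)^{\!\f12}\Big(\int_{\H_u^{-u,v_*}}\!\!\!r^{\ga_0'}|L\Om_{ij}^l(r^\ga f)|^2 r^{-2}\Big)^{\!\f12}.
\]
The boundary term on $S(u,-u)$ is dominated by $\sum_{l\le1,i<j}\int_{S(u,-u)}|r^\ga\Om_{ij}^l f|^4 r^{-2}$ (Cauchy--Schwarz on the compact sphere, or $W^{1,4}({\mathbb S}^2)\hookrightarrow L^\infty$). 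Inserting the two bounds into the sphere inequality and using $ab\le a^2+b^2$ gives \eqref{eq:globalSobolev:massless}.

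\textbf{Parts (2)--(3).} For the complex scalar field one repeats the argument with $D$ replacing $\nab$: the pointwise bound for $|\phi|$ follows from the scalar case applied to the real function $|\phi|$ via the diamagnetic (Kato) inequalities $|\nabb|\phi||\le|\D\phi|$, $|\nabb^2|\phi||\les|\D^2\phi|+|F||\phi|$ and their iterates, or equivalently by running the same transport directly on $|\D^l\phi|^2$. Since the $\Om_{ij}$ are Killing, all the sphere integrations by parts and the transport formula are unchanged, the only new contributions being commutators $[\D,\Om_{ij}]$, $[D_L,\Om_{ij}]$, which either vanish or are lower order with an extra power of $r^{-1}$ and hence harmless on $\{r\ge R\}$. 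The tensorial estimate \eqref{12.25.2.17} is proved identically with ${\sl\cL}_{\Om_{ij}}$ and $\sl\nabla$ in place of $D$, using $|\nabb|H||\le|\sl\nabla H|$ and $[\sl\nabla_L,{\sl\cL}_{\Om_{ij}}]$.

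\textbf{Main obstacle.} The delicate point is the derivative/weight bookkeeping in the transport step: one must arrange that transporting any given factor along the cone costs exactly one $L$--derivative and lands on a factor carrying at most one $\Om_{ij}$, so that the total is at most two derivatives with at most one being $L$ (the second angular derivative in the top--order cone factor must never be differentiated in $v$). This is exactly why the product form of the sphere Sobolev inequality --- rather than the cheaper $W^{1,4}\hookrightarrow L^\infty$ or $H^2\hookrightarrow L^\infty$ --- is needed, and why the weight must split as $2\ga=\ga_0'+2\ga_2$. The restriction to the exterior region $r\ge R$, $-u<v\le v_*$ is what keeps all boundary and lower--order error terms under control throughout.
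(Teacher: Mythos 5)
Your overall strategy is a reasonable--looking inversion of the paper's: the paper integrates a Gagliardo--Nirenberg estimate on $S^2$ \emph{during} the null transport of the $L^4$-on-sphere quantity and only then upgrades to $L^\infty$ via $W^{1,4}(S^2)\hookrightarrow L^\infty$, whereas you first write a product form sup-on-sphere inequality ($H^1\times H^2$) and then transport each $L^2$-on-sphere factor along $\H_u$. Unfortunately the order matters here, and as written your argument does not close.

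The derivative count fails on the second factor of your sphere inequality. Transporting $B(v):=\sum_{k\le2}\int_{S(u,v)}|\Om_{ij}^k(r^\ga f)|^2 r^{-2}$ by your formula produces, after Cauchy--Schwarz in $v$, a cone integral $\big(\int_{\H_u^{-u,v_*}}r^{\ga_0'}|L\Om_{ij}^k(r^\ga f)|^2 r^{-2}\big)^{1/2}$ with $k\le 2$. The term $L\Om_{ij}^2(r^\ga f)$ ($1$ $L$-derivative plus $2$ angular) simply does not appear on the right of \eqref{eq:globalSobolev:massless}, which only allows $L\Om_{ij}^l(r^\ga f)$ with $l\le 1$. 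The cross term $X_1^{1/2}Y_1^{1/2}X_2^{1/2}Y_2^{1/2}$ in the product $A\cdot B$ cannot be rearranged to eliminate it; nor can you drop the transport of $B$, since you then retain a sphere integral at the generic time $v$ that is not a cone quantity. A second, independent failure occurs in the boundary contribution: $A(-u)\,B(-u)$ is a product of $L^2(S(u,-u))$ norms with up to one and up to two angular derivatives respectively, and you claim this is dominated by $\sum_{l\le1}\int_{S(u,-u)}|r^\ga\Om_{ij}^l f|^4 r^{-2}$. This is false on a fixed sphere: testing with a high-frequency spherical harmonic $Y_n$ gives $A(-u)B(-u)\sim n^6$ while $\sum_{l\le1}\|\Om^l Y_n\|_{L^4}^4\les n^5$. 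Your ``main obstacle'' paragraph correctly articulates the constraint (``the second angular derivative in the top-order cone factor must never be differentiated in $v$''), but the proof as structured cannot enforce it, because you have already committed both angular derivatives to a single factor before the transport.

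The paper avoids both problems by transporting $\int_{S(u,v)}|r^\ga f|^4 r^{-2}$ directly: the $v$-derivative produces $L(r^\ga f)\cdot|r^\ga f|^3$, H\"older puts $|r^\ga f|^3$ into $L^6$, and a Poincar\'e/GN step on the unit sphere interpolates $L^6$ between $H^1$ and $L^4$ so that only \emph{one} angular derivative is consumed at this stage. Applying the result to $\Om_{ij}^l f$, $l\le1$, and finishing with $W^{1,4}(S^2)\hookrightarrow L^\infty$ then supplies the second angular derivative on a factor that never meets $L$, and also produces the $L^4(S(u,-u))$ boundary term directly rather than a product of $L^2$'s. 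If you want to salvage a ``pointwise-first'' route you would need an asymmetric sup-on-sphere estimate (or an asymmetric transport) in which the $H^2$ factor is never exposed to $v$-integration; that is essentially forced back onto the paper's ordering.
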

\begin{proof}
The case for $\ga=\ga_2=\frac{3}{2}$ was first established in \cite{KN}. The proof for this general version of this lemma is a minor modification of the original one. For reader's benefit, we give the proof here.  The crucial idea is based on the following Poincar\'e inequality
\begin{align*}
\int_{S}|\Phi-\bar \Phi|^2 \leq C\left(\int_{S}|\nabb \Phi|\right)^2
\end{align*}
for any smooth function $\Phi$ and $2$-sphere $S$, where the constant $C$ depends only on the sphere and $\bar \Phi$ is the mean value of the function $\Phi$ on the surface. For the case when the sphere $S$ is the standard unit sphere ${\mathbb S}^2$,  with $C$ be the universal constant, we  can apply the above inequality to $|f|^3$ to derive that
\begin{align*}
\int_{{\mathbb S}^2}|f|^6 d\om \leq C \int_{{\mathbb S}^2}|\pa_\om f|^2 d\om \cdot \int_{{\mathbb S}^2} |f|^4 d\om+C\left(\int_{{\mathbb S}^2} |f|^3\right)^2\leq C \int_{{\mathbb S}^2}\left(|f|^2+|\pa_\om f|^2\right) d\om \cdot \int_{{\mathbb S}^2} |f|^4 d\om.
\end{align*}

For any constants $\ga$, $\ga_1$, $\ga_2$ satisfying $3\ga=\ga_1+2\ga_2$, we in particular have
\begin{align*}
\int_{{\mathbb S}^2}|r^\ga f|^6 d\om \leq C \int_{{\mathbb S}^2}\left(|r^{\ga_1} f|^2+|\pa_\om (r^{\ga_1} f)|^2\right) d\om \cdot \int_{{\mathbb S}^2} |r^{\ga_2} f|^4 d\om.
\end{align*}
 Let $\ga$, $\ga_0'$, $\ga_1$ and $\ga_2$ verify the following relations
\[
\ga_0'+6\ga_1=6\ga,\quad 3\ga_1=2\ga+\ga_2.
\]
By integrating along the outgoing null hypersurface $\H_u$, we can obtain
\begin{align*}
&\int_{S(u, v_1)}|r^{\ga} f|^4 r^{-2}  \leq \int_{S(u, -u)}|r^\ga f|^4 r^{-2} +4\int_{\H_u^{-u, v_1}}|L(r^\ga f)| |r^\ga f|^3 dvd\om \\
\leq &\int_{S(u, -u)}|r^\ga f|^4 r^{-2} +4\left(\int_{\H_u^{-u, v_1}}r^{\ga_0'}|L(r^\ga f)|^2 dvd\om \right)^\f12 \left(\int_{\H_u^{-u, v_1}}|r^{\ga_1} f|^6 dvd\om\right)^\f12\\
\leq& \int_{S(u, -u)}|r^\ga f|^4 r^{-2}  +4C \left(\sup\limits_{ v} \int_{S(u, v)}|r^{\ga} f|^4 r^{-2}\right)^\f12\\
 &\qquad\qquad\cdot\left(\int_{\H_u^{-u, v_1}}r^{\ga_0'}|L(r^\ga f)|^2 dvd\om \right)^\f12  \left(\int_{\H_u^{-u, v_1}}(|r^{\ga_2} f|^2+|\pa_\om (r^{\ga_2} f)|^2) dvd\om\right)^\f12.
\end{align*}
Taking supremum on the left hand side with respect to $v_1\in(-u, v_*]$ gives
\begin{equation}\label{12.25.1.17}
\begin{split}
\sup_{-u<v\le v_*}\int_{S(u, v)}|r^\ga f|^4 r^{-2}
&\les \int_{S(u, -u)}|r^\ga f|^4 r^{-2} + \int_{\H_u^{-u, v_*}}r^{\ga_0'}|L(r^\ga f)|^2 dvd\om \\
&\qquad \qquad \qquad \qquad \cdot \int_{\H_u^{-u, v_*}}(|r^{\ga_2} f|^2+|\pa_\om (r^{\ga_2} f)|^2) dvd\om.
\end{split}
\end{equation}
Finally, by using
 the standard   Sobolev embedding on the sphere $S(u,v)$, we have
\begin{equation*}
\sup_{S(u,v)}|\varphi|\les \sum_{l\le 1, 1\le i<j\le 3}(\int_{S(u,v)} |\Omega^l_{ij}\varphi|^4 r^{-2})^\frac{1}{4}.
\end{equation*}
 The desired estimate \eqref{eq:globalSobolev:massless} then follows from  (\ref{12.25.1.17}). (2) and (3) of  Lemma \ref{lem:globalSobolev} can be  proved in the same way.
\end{proof}


\subsection{Energy identities}

In this subsection, we derive fundamental energy identities for the massive MKG equations.

For any 2-form $\G$, satisfying the Bianchi identity \eqref{bianchi},   any complex scalar field $\phi$ and connection field $A$, we define the associated energy momentum tensor
\begin{equation*}
\begin{split}
 T[\phi, \mathcal{G}]_{\a\b}&=\G_{\a\mu}\G_\b^{\;\mu}-\frac{1}{4}m_{\a\b}\G_{\mu\nu}\G^{\mu\nu}+\Re\left(\overline{D_\a\phi}D_\b\phi\right)-\f12 m_{\a\b}(\overline{D^\mu\phi}D_\mu\phi+|\phi|^2),
\end{split}
\end{equation*}
where $m_{\a\b}$ is the Minkowski metric and $D_\a$ denotes  the covariant derivative  associated to   the connection field  $A$.  Given a vector field $X$, we have the following identity
\[
\pa^\mu(T[\phi,\G]_{\mu\nu}X^\nu) = \Re((\Box_A-1) \phi X^\nu \overline{D_\nu\phi})+X^\nu F_{\nu\mu}J^\mu[\phi]+\pa^\mu \G_{\mu\b}\G_{\nu}^{\;\b}X^{\nu}+T[\phi,\G]^{\mu\nu}\pi^X_{\mu\nu},
\]
where $\pi_{\mu\nu}^X=\f12 \mathcal{L}_X m_{\mu\nu}$ is the deformation tensor of the vector field $X$ in Minkowski space and $J_\mu[\phi]=\Im(\phi\cdot \overline{D_\mu\phi})$. We also note that the term  $F=F[A]$ appears from commuting covariant derivatives of $\phi$ as in (\ref{1.17.1.18}).
Throughout this paper, we raise and lower indices with respect to the Minkowski metric $m_{\mu\nu}$.

Take any smooth function $\chi$. We have the following equality
\begin{align*}
 \f12\pa^{\mu}\left(\chi \pa_\mu|\phi|^2-\pa_\mu\chi|\phi|^2\right)= \chi (\overline{D_\mu\phi}D^\mu\phi+|\phi|^2) -\f12\Box\chi\cdot|\phi|^2+\chi \Re((\Box_A-1)\phi\cdot \overline\phi).
\end{align*}
Let $X,Y$ be smooth vector fields. We now define the vector field $\widetilde{P}^{X,Y}[\phi, \G]$ with components
\begin{equation}
\label{mcurent} \widetilde{P}^{X,Y}_\mu[\phi,\G]=T[\phi,\G]_{\mu\nu}X^\nu -
\f12\pa_{\mu}\chi \cdot|\phi|^2 + \f12 \chi\pa_{\mu}|\phi|^2+Y_\mu,
\end{equation}
where the vector field $Y$ may depend on the scalar field $\phi$. We then have the equality
\begin{align*}
\pa^\mu \widetilde{P}^{X,Y}_\mu[\phi,\G] =&\Re((\Box_A-1) \phi(\overline{D_X\phi}+\chi\overline \phi))+div(Y)+X^\nu F_{\nu\mu}J[\phi]^\mu+\pa^\mu \G_{\mu\ga}\G_{\nu}^{\;\ga}X^{\nu}
\\&+T[\phi,\G]^{\mu\nu}\pi^X_{\mu\nu}+
\chi (\overline{D_\mu\phi}D^\mu\phi+|\phi|^2) -\f12\Box\chi\cdot|\phi|^2.
\end{align*}
Here the operator $\Box$ is the wave operator in Minkowski space and the divergence of the vector field $Y$ is also taken in the Minkowski space and $J[\phi]_\mu=\Im(\phi\cdot \overline{D_\mu \phi})$.

Now take any region $\mathcal{D}$ in $\mathbb{R}^{3+1}$. Assume on this region the scalar field $\phi$ and the 2-form $\G$ satisfies the following linear equations
\begin{equation}
\label{eq:mMKG:lin}
 \pa^\nu \G_{\mu\nu}=J_{\mu},\quad \Box_A\phi-\phi=h.
\end{equation}
Here we note that the covariant operator $\Box_A$ is associated to the 2-form $F$. Then using the Stokes' formula, the above calculation leads to the following energy identity
\begin{align}
\notag &\iint_{\mathcal{D}} F_{X\mu} J[\phi]^\mu- \G_{X\ga}J^\ga+\Re(h\cdot (\overline{D_X\phi}+\chi \bar \phi)) d\vol\\
\notag
&+ \iint_{\mathcal{D}}div(Y)+T[\phi,\G]^{\mu\nu}\pi^X_{\mu\nu}+
\chi (\overline{D_\mu\phi}D^\mu\phi+|\phi|^2 )-\f12\Box\chi\cdot|\phi|^2d\vol\\
&=\iint_{\mathcal{D}}\pa^\mu \widetilde{P}^{X,Y}_\mu[\phi,\G]d\vol=\int_{\pa \mathcal{D}}i_{\widetilde{P}^{X,Y}[\phi,\G]}d\vol,
\label{energyeq}
\end{align}
where $\pa\mathcal{D}$ denotes the boundary of the domain $\mathcal{D}$ and $i_Z d\vol$ denotes the contraction of the volume form $d\vol$
with the vector field $Z$ which gives the surface measure of the
boundary. For example, for any basis $\{e_1, e_2,
\ldots, e_n\}$, we have $i_{e_1}( de_1\wedge de_2\wedge\ldots
de_k)=de_2\wedge de_3\wedge\ldots\wedge de_k$.

In this paper, the domain $\mathcal{D}$ will be a regular region bounded by a level set of  $t$, an outgoing null hypersurfaces $\H_u$ and an incoming null
hypersurfaces $\Hb_v$. We now compute $i_{\widetilde{P}^{X,Y}[\phi,\G]}d\vol$ on these three kinds of hypersurfaces.
Recall the volume form in Minkowski space
 \[
d\vol=dx\wedge dt=-dt\wedge dx.
\]
 We thus can show that on $\Sigma_0$
\begin{align}
\notag
 i_{\widetilde{P}^{X,Y}[\phi,\G]}d\vol
 =&-(\Re(\overline{D^0\phi}
D_X\phi)-\f12 X^0\overline{D^\ga\phi}D_\ga\phi-\f12 X^0 |\phi|^2-\f12 \pa^0\chi \cdot
|\phi|^2+\f12\chi\pa^0|\phi|^2+Y^0\\
\label{curlessR}
&+\G^{0\mu}\G_{\nu\mu}X^\nu-\frac{1}{4}X^0\G_{\mu\nu}\G^{\mu\nu})dx.
\end{align}
On the outgoing null hypersurface $\H_u$, we can write the volume form
\[
d\vol=r^2dr\wedge dt\wedge d\om=2r^2dv\wedge du\wedge d\om=-2r^2 du\wedge dv\wedge d\om.
\]
Notice that $\Lb=\pa_u$. We can compute on $\H_u$ that
\begin{align}
\notag
i_{\widetilde{P}^{X,Y}[\phi, \G]}d\vol=&-2(\Re(\overline{D^{\Lb}\phi}
D_X\phi)-\f12 X^{\Lb}\overline{D^\ga\phi}D_\ga\phi-\f12X^{\Lb}|\phi|^2-\f12
\pa^{\Lb}\chi|\phi|^2+\f12\chi \pa^{\Lb}|\phi|^2+Y^{\Lb}\\
\label{curStau}
&+\G^{\Lb\mu}\G_{\nu\mu}X^\nu-\frac{1}{4}X^{\Lb}\G_{\mu\nu}\G^{\mu\nu})r^2dv\wedge d\om.
\end{align}
Similarly, on $\Hb_v$ we
have
\begin{equation}
\label{curnullinfy}
\begin{split}
  i_{\widetilde{P}^{X,Y}[\phi,\G]}d\vol=&2(\Re(\overline{D^{L}\phi}
D_X\phi)-\f12 X^{L}\overline{D^\ga\phi}D_\ga\phi-\f12 X^L|\phi|^2-\f12
\pa^{L}\chi|\phi|^2+\f12\chi \pa^{L}|\phi|^2+Y^{L}\\
&+\G^{L\mu}\G_{\nu\mu}X^\nu-\frac{1}{4}X^L\G_{\mu\nu}\G^{\mu\nu})r^2du\wedge d\om.
\end{split}
\end{equation}

We now establish the $r$-weighted energy identities for solutions of the linear massive MKG equations \eqref{eq:mMKG:lin} in the exterior region for the domain $\mathcal{D}_{u_1}^{ v_1}$. In the energy identity \eqref{energyeq}, we choose the vector fields $X$, $Y$ and the function $\chi$ as follows
\[
 X=r^{p}L, \quad Y=\frac{p}{2}r^{p-2}|\phi|^2L,\quad  \chi=r^{p-1}.
\]
  We  then compute the non-vanishing components of the deformation tensor for the vector field $X$:
\[
\pi_{L\Lb}^X=-pr^{p-1},\quad \delta^{AB}\pi^X_{e_Ae_B}=r^{p-1}, \, \, A,B=1,2.
\]
In the sequel, $\a, \rho, \sigma$ are the components of $\G$ defined in (\ref{12.9.2.17}).
Therefore we can show that
\begin{align*}
 & div(Y)+T[\phi, \G]^{\mu\nu}\pi_{\mu\nu}^X+\chi (\overline{D^{\mu}\phi}D_\mu\phi+|\phi|^2)-\f12\Box\chi |\phi|^2\\
&=\frac{p}{2}r^{-2}L(r^{p}|\phi|^2)+\f12r^{p-1}\left(p(|D_L\phi|^2+|\a|^2)+(2-p)(|\D\phi|^2+\rho^2+\si^2)\right)\\
&\quad -\f12 p(p-1)r^{p-3}|\phi|^2-\f12 p r^{p-1}|\phi|^2\\
&=\f12 r^{p-1}\left(p(r^{-2}|D_L(r\phi)|^2+|\a|^2)+(2-p)(|\D\phi|^2+\rho^2+\si^2)\right)-\f12 p r^{p-1}|\phi|^2.
\end{align*}
Next we compute the boundary terms.
\begin{align*}
 \int_{\H_{u}}i_{\widetilde{P}^{X,Y}[\phi, \G]}d\vol&=\int_{\H_{u}}\{r^{p}(|D_L(r\phi)|^2+r^2|\a|^2)-\f12 L(r^{p+1}|\phi|)\}r^{-2},\\
\int_{\Hb_v}i_{\widetilde{P}^{X,Y}[\phi, \G]}d\vol&=-\int_{\Hb_v}\{r^{p+2}(|\D\phi|^2+|\rho|^2+|\si|^2+|\phi|^2)+\f12\Lb(r^{p+1}|\phi|^2)\}r^{-2},\\
\int_{\Si_0^{u_1,u_2}}i_{\widetilde{P}^{X,Y}[\phi,\G]}d\vol&=\f12\int_{\Si_0^{u_1,u_2}}r^{p+2}\big(r^{-2}|D_L(r\phi)|^2+|\D\phi|^2+|\phi|^2+|\a|^2+|\rho|^2+\si^2\big)\\
&\quad\quad-\pa_r(r^{p+1}|\phi|^2) d\om dr.
\end{align*}
For the domain $\mathcal{D}_{u_1}^{-u_2}$ in the exterior region we have the identity
\begin{align*}
 &\int_{\H_{u_1}^{-u_2}}L(r^{p+1}|\phi|^2)dv d\om-\int_{\Hb_{-u_2}^{u_1}}\Lb(r^{p+1}|\phi|^2)du d\om-\int_{\Si_0^{u_1,u_2}}\pa_r(r^{p+1}|\phi|^2)d\om dr=0.
\end{align*}
The above calculations then lead to the first energy identity in the following lemma.
\begin{lem}
Assume that  the triplet  $(\mathcal{G}, \phi, A)$ verifies \eqref{eq:mMKG:lin}, i.e.,
\begin{equation*}
 \pa^\nu \G_{\mu\nu}=J_{\mu},\quad \Box_A\phi-\phi=h.
\end{equation*}
Then the following identities hold true in the exterior region $\{r\ge R+t\}$  (with $F=dA$).
\begin{itemize}
\item[(1)]
For all $u_2<u_1 \leq -\frac{R}{2}$ with $X=r^p L$, $0<p<2$,   $\a, \rho, \sigma$   the null components of $\G$  (as defined in (\ref{12.9.2.17})),
\begin{align}
\notag
&\iint_{\mathcal{D}_{u_1}^{-u_2}} F_{X\mu} J[\phi]^\mu- \G_{X\ga}J^\ga+\Re(h\cdot (\overline{D_X\phi}+\chi \bar \phi)) \\
\notag
  &+\f12\iint_{\mathcal{D}_{u_1}^{- u_2}}r^{p-1}\Big(p(r^{-2}|D_L(r\phi)|^2+|\a|^2)+(2-p)(|\D\phi|^2+|\rho|^2+\si^2)\Big)\\
\label{pWescaout}
&+\int_{\H_{u_1}^{-u_2}}r^p(r^{-2}|D_L(r\phi)|^2+|\a|^2)+\int_{\Hb_{-u_2}^{u_1} }r^{p}(|\D\phi|^2+|\rho|^2+\si^2+|\phi|^2)\\
\notag
=&\f12\int_{\Si_0^{u_1,u_2}}r^p(r^{-2}|D_L(r\phi)|^2+|\D\phi|^2+|\phi|^2+|\a|^2+|\rho|^2+\si^2)+\f12\iint_{\mathcal{D}_{u_1}^{-u_2}}pr^{p-1}|\phi|^2.
\end{align}

\item[(2)]  For all $u_2<u_1\leq -\frac{R}{2}$ the following version of  energy  identity holds true
\begin{equation}
 \label{eq:EEst:id:out}
 \begin{split}
&2\iint_{\mathcal{D}_{u_1}^{-u_2}} \left(F_{0\mu} J[\phi]^\mu- \G_{0\ga}J^\ga+\Re(h\cdot \overline{D_0\phi})\right)\\
&+E[\phi, \G](\H_{u_1}^{-u_2})+E[\phi, \G](\Hb_{-u_2}^{u_1})= E[\phi, \G](\Si_0^{u_1, u_2}).
\end{split}
\end{equation}

\end{itemize}
\end{lem}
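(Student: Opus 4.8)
Both identities are instances of the master identity \eqref{energyeq}, specialized to the domain $\mathcal{D}_{u_1}^{-u_2}$ in the exterior region, whose boundary consists of the three pieces $\H_{u_1}^{-u_2}$, $\Hb_{-u_2}^{u_1}$ and $\Si_0^{u_1,u_2}$, so that $\int_{\pa\mathcal D} i_{\widetilde{P}^{X,Y}[\phi,\G]}\,d\vol$ splits accordingly. For part (1) one inserts the concrete choice $X=r^p L$, $Y=\frac p2 r^{p-2}|\phi|^2 L$, $\chi=r^{p-1}$, for which the nonzero deformation tensor components $\pi^X_{L\Lb}=-pr^{p-1}$, $\delta^{AB}\pi^X_{e_Ae_B}=r^{p-1}$ and the volume integrand $div(Y)+T[\phi,\G]^{\mu\nu}\pi^X_{\mu\nu}+\chi(\overline{D^\mu\phi}D_\mu\phi+|\phi|^2)-\f12\Box\chi|\phi|^2$ have already been computed above and shown to equal $\f12 r^{p-1}\big(p(r^{-2}|D_L(r\phi)|^2+|\a|^2)+(2-p)(|\D\phi|^2+\rho^2+\si^2)\big)-\f12 p r^{p-1}|\phi|^2$. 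Likewise the three boundary contractions of $i_{\widetilde{P}^{X,Y}[\phi,\G]}\,d\vol$ over $\H_{u_1}^{-u_2}$, $\Hb_{-u_2}^{u_1}$ and $\Si_0^{u_1,u_2}$ are exactly the displayed formulas, each of which carries, besides the quadratic energy densities, a single exact total-derivative term: $-\f12 L(r^{p+1}|\phi|^2)$ on $\H_{u_1}^{-u_2}$, $-\f12\Lb(r^{p+1}|\phi|^2)$ on $\Hb_{-u_2}^{u_1}$ and $-\pa_r(r^{p+1}|\phi|^2)$ on $\Si_0^{u_1,u_2}$.

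The key observation is then the elementary identity
\[
\int_{\H_{u_1}^{-u_2}}L(r^{p+1}|\phi|^2)\,dvd\om-\int_{\Hb_{-u_2}^{u_1}}\Lb(r^{p+1}|\phi|^2)\,dud\om-\int_{\Si_0^{u_1,u_2}}\pa_r(r^{p+1}|\phi|^2)\,d\om dr=0,
\]
which is just the fundamental theorem of calculus along the null/radial rays foliating $\mathcal D_{u_1}^{-u_2}$ (equivalently Stokes' theorem restricted to $\pa\mathcal D$), and was recorded above. Using it to eliminate all three total-derivative boundary terms, combining what remains with the volume identity and the source term $\iint_{\mathcal D}\big(F_{X\mu}J[\phi]^\mu-\G_{X\ga}J^\ga+\Re(h(\overline{D_X\phi}+\chi\bar\phi))\big)$ from \eqref{energyeq}, and moving the sign-indefinite bulk term $-\f12\iint_{\mathcal D} p r^{p-1}|\phi|^2$ to the right, yields exactly \eqref{pWescaout}.

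\textbf{Part (2).} Here one specializes \eqref{energyeq} to $X=\pa_t$, $Y=0$, $\chi=0$. Since $\pa_t$ is Killing, $\pi^X\equiv 0$, so $T[\phi,\G]^{\mu\nu}\pi^X_{\mu\nu}$, $div(Y)$, the $\chi$-terms and the $\Box\chi$-term all vanish, and the entire left side of \eqref{energyeq} reduces to $\iint_{\mathcal D_{u_1}^{-u_2}}\big(F_{0\mu}J[\phi]^\mu-\G_{0\ga}J^\ga+\Re(h\,\overline{D_0\phi})\big)$. For the boundary, one feeds $X=\pa_t$ into the already-derived contractions \eqref{curlessR}, \eqref{curStau}, \eqref{curnullinfy}; writing $\pa_t=\f12(L+\Lb)$ and using the standard pointwise algebraic relations for the null components of $T[\phi,\G]$ together with $\G_{\mu\nu}\G^{\mu\nu}=2(\rho^2+\si^2)-(|\a|^2+|\ab|^2)$ and $|\G|^2=\rho^2+\si^2+\f12(|\a|^2+|\ab|^2)$, one identifies the $\H_u$-, $\Hb_v$- and $\Si_0$-contractions with the fluxes $-E[\phi,\G](\H_{u_1}^{-u_2})$, $-E[\phi,\G](\Hb_{-u_2}^{u_1})$ and $E[\phi,\G](\Si_0^{u_1,u_2})$ in the notation of \eqref{generalizedenergy-norms}, the overall factors of $2$ in \eqref{curStau}, \eqref{curnullinfy} matching the factor $2$ in front of the bulk in \eqref{eq:EEst:id:out}. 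Rearranging gives \eqref{eq:EEst:id:out}.

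\textbf{Main difficulty.} There is no deep obstacle here: every ingredient — the master identity, the deformation tensor of $r^pL$, the boundary contractions, and the cancellation of the total-derivative terms — has already been assembled above, so the proof is essentially bookkeeping. The one point requiring care is keeping the signs and orientations straight: verifying that $T[\phi,\G]^{\mu\nu}\pi^X_{\mu\nu}$ contributes the coercive (for $0<p<2$) bulk densities $p|\a|^2$ and $(2-p)(|\D\phi|^2+\rho^2+\si^2)$ while the $\chi$- and $\Box\chi$-terms combine with them to leave behind only the single sign-indefinite term $-\f12 p r^{p-1}|\phi|^2$, and checking that the boundary orientation conventions make the outgoing and incoming flux terms enter \eqref{pWescaout} and \eqref{eq:EEst:id:out} with the correct sign.
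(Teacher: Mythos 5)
Your proposal is correct and follows essentially the same route as the paper: both specialize the master identity \eqref{energyeq} to $X=r^pL$, $Y=\frac p2 r^{p-2}|\phi|^2 L$, $\chi=r^{p-1}$ for part (1), use the same deformation-tensor computation, the same boundary contractions \eqref{curlessR}--\eqref{curnullinfy}, and the same cancellation of the three total-derivative terms, and both take $X=\pa_t$, $Y=0$, $\chi=0$ for part (2). The steps you identify as bookkeeping are indeed all that the paper does.
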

Indeed, to see (\ref{eq:EEst:id:out}), we note that $\pa_t$ is killing, which gives $\pi_{\mu\nu}^{\pa_t}=0$. (\ref{eq:EEst:id:out}) can be derived by applying in the region $\mathcal{D}_{u_1}^{-u_2}$  the energy identity \eqref{energyeq} with  $X=\pa_t$, $Y=0$, $\chi=0$.

\begin{remark}
In application to the proof of our main theorem   we will  take  either  $\G=0$ or $\phi=0$  which allows us  to control energy fluxes for  Maxwell field and the scalar field separately.
\end{remark}
\subsection{The decay of energy flux}
In this subsection, we derive the decay of  energy flux and the weighted energy flux  in the exterior region. To obtain bounds for the energy flux, in this paper, we will constantly employ the following Gronwall's type inequality.

\begin{lem}\label{1.6.4.18}
Let $f_0$ be a nonnegative non-increasing function,  $a, b >1$ and $R\ge 1$. Let $f$ and $g$ be two nonnegative continuous functions such that for any $\f12 R\le \tau\le \iota,$ there holds
\begin{equation}\label{1.6.1.18}
f(\tau,\iota)+g(\tau,\iota)\les f_0(\tau)+\int_\tau^\iota f(\tau', \iota) {\tau'}^{-a}d\tau'+\int_\tau^{\iota} {\iota'}^{-b} g(\tau,\iota') d\iota'.
\end{equation}
If $g(\tau, \iota)$ is non-increasing with respect to $\tau$ or $f(\tau, \iota)$ is non-decreasing with respect to $\iota$, then
\begin{equation}\label{1.6.3.18}
f(\tau, \iota)+g(\tau, \iota)\les f_0(\tau),  \qquad \f12 R \le \tau\le \iota.
\end{equation}
\end{lem}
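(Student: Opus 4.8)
The plan is to treat the two additive integral terms on the right-hand side of \eqref{1.6.1.18} one at a time, exploiting the monotonicity hypothesis to iterate. First I would observe that $a,b>1$ means the tails $\int_\tau^\infty {\tau'}^{-a}d\tau'$ and $\int_\tau^\infty {\iota'}^{-b}d\iota'$ are finite and, more importantly, go to $0$ as $\tau\to\infty$; concretely $\int_\tau^\iota {\tau'}^{-a}d\tau'\le \frac{1}{a-1}\tau^{1-a}$ and similarly for the $b$-integral. Pick $\tau_0=\tau_0(a,b,R)$ large enough that the implicit constant in \eqref{1.6.1.18} times each of these tails is at most $\tfrac14$. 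For $\tau\ge\tau_0$ the inequality reads, schematically, $f+g\le f_0(\tau)+\tfrac14\sup f+\tfrac14\sup g$ over the relevant ranges, and a standard absorption argument (taking suprema and using the monotonicity to guarantee the suprema are attained/finite) yields $f(\tau,\iota)+g(\tau,\iota)\lesssim f_0(\tau)$ on $\tau\ge\tau_0$.

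For the remaining compact range $\tfrac12 R\le \tau\le \tau_0$ I would run a genuine Gronwall iteration. Consider, say, the case where $g(\tau,\iota)$ is non-increasing in $\tau$ (the other case is symmetric, integrating in $\iota$ instead). Fix $\iota$ and set $G(\tau)=\sup_{\tau\le\sigma\le\iota}g(\sigma,\iota)$, which by hypothesis equals $g(\tau,\iota)$; then $\int_\tau^\iota {\iota'}^{-b}g(\tau,\iota')d\iota'$ is handled by first bounding $g(\tau,\iota')\le g(\tfrac12 R,\iota')$ is \emph{not} what we want — instead use the already-established bound on $[\tau_0,\iota]$ to control the contribution of that subinterval, leaving only $\int_\tau^{\tau_0}$, whose length is bounded by $\tau_0$. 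Then the inequality becomes $f(\tau,\iota)+g(\tau,\iota)\le C f_0(\tau)+C\int_\tau^{\tau_0} f(\tau',\iota){\tau'}^{-a}d\tau'$ with $f_0$ non-increasing, and ordinary Gronwall on the variable $\tau\in[\tfrac12 R,\tau_0]$ — a compact interval with ${\tau'}^{-a}$ bounded — gives $f(\tau,\iota)+g(\tau,\iota)\le C' f_0(\tau) e^{C\tau_0^{1-a}(\tau_0-\tau)}\lesssim f_0(\tau)$, with constant depending only on $a,b,R$ as required. The monotonicity of $g$ in $\tau$ (resp. of $f$ in $\iota$) is exactly what lets us replace the $\iota'$-integral term by a quantity of the form $(\text{const})\cdot g(\tau,\iota)$ after splitting off $[\tau_0,\iota]$, so that it can be absorbed rather than iterated.

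The main obstacle — and the reason the monotonicity hypothesis is stated at all — is the second integral $\int_\tau^\iota {\iota'}^{-b} g(\tau,\iota')\,d\iota'$: unlike the first term, the integration variable $\iota'$ is the \emph{second} slot of $g$, not a slot over which we are iterating, so a naive Gronwall in $\tau$ does not close. One must either (i) use that $g(\tau,\cdot)$ inherits, via $\iota'\le\iota$ and the first-established decay estimate on a large-$\iota$ region, a bound that makes this term a small multiple of $\sup_{\iota'\le\iota} g(\tau,\iota')$ and hence absorbable when $\tau$ is large, or (ii) when $f$ is non-decreasing in $\iota$, bound $g(\tau,\iota')\le$ (the right-hand side of \eqref{1.6.1.18} with $\iota$ replaced by $\iota'$) and then bound $f(\tau,\iota')\le f(\tau,\iota)$ to reduce to the previous case. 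Carefully organizing this split between a "large parameter'' regime (where smallness of $\tau^{1-a}$, $\tau^{1-b}$ forces absorption) and a "compact'' regime (where classical Gronwall applies because all weights are bounded) is the crux; everything else is routine.
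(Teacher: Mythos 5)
Your large-$\tau$/small-$\tau$ split is not how the paper argues, and as written it does not close in the compact regime. The paper uses a clean two-step Gronwall that needs no splitting: fix $\tau_0\le\iota_0$; for $\tau_0\le\tau\le\iota\le\iota_0$ use $f_0(\tau)\le f_0(\tau_0)$, $g(\tau,\iota')\le g(\tau_0,\iota')$ (monotonicity in the \emph{first} slot, valid because $\tau\ge\tau_0$), and nonnegativity to enlarge the $\iota'$-integration to $[\tau_0,\iota_0]$ and to replace $f$ by $f+g$ in the $\tau'$-integral. This produces
\[
f(\tau,\iota)+g(\tau,\iota)\les f_0(\tau_0)+\int_{\tau_0}^{\iota_0} g(\tau_0,\iota')\,{\iota'}^{-b}\,d\iota'+\int_\tau^\iota (f+g)(\tau',\iota)\,{\tau'}^{-a}\,d\tau',
\]
where the first two summands are constants with respect to $\tau$. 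A standard Gronwall in $\tau$ over $[\tau_0,\iota]$ (integrable kernel since $a>1$) removes the last integral; specializing to $\tau=\tau_0$, $\iota=\iota_0$ yields a self-referential inequality in $(\tau_0,\iota_0)$, and a second Gronwall in $\iota_0$ (integrable kernel since $b>1$) gives $f(\tau_0,\iota_0)+g(\tau_0,\iota_0)\les f_0(\tau_0)$, which is the conclusion because $\tau_0\le\iota_0$ were arbitrary.

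The concrete gap in your proposal is in the compact-regime step. You write that after establishing the bound for $\tau\ge\tau_0$, for $\tau<\tau_0$ one can ``use the already-established bound on $[\tau_0,\iota]$ to control the contribution of that subinterval, leaving only $\int_\tau^{\tau_0}$''. But the integral in question is $\int_\tau^\iota {\iota'}^{-b}\,g(\tau,\iota')\,d\iota'$, and $\iota'$ sits in the \emph{second} argument of $g$, while the first argument stays fixed at $\tau<\tau_0$. Splitting the $\iota'$-range at $\tau_0$ does not help: even for $\iota'\ge\tau_0$ the integrand is $g(\tau,\iota')$ with $\tau<\tau_0$, which is precisely what you have not yet controlled. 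Monotonicity gives you $g(\tau,\iota')\ge g(\tau_0,\iota')$ — the wrong direction for an upper bound. So the reduction to ``only the $f$-integral plus classical Gronwall on a compact interval'' does not go through, and this is exactly the obstacle you flag in your last paragraph without resolving. Your alternative (ii), substituting the right-hand side of the hypothesis with $\iota\mapsto\iota'$ and using monotonicity of $f$, is circular: the nested $\int{\iota''}^{-b}g(\tau,\iota'')\,d\iota''$ reappears and cannot be absorbed when $\tau$ is only bounded below by $R/2$ rather than large. A secondary, fixable issue is that your large-$\tau$ absorption, as sketched, produces a bound $\les f_0(\tau_0)$ rather than the desired $\les f_0(\tau)$ unless you normalize the supremum by $f_0(\tau)$ from the start; but the essential missing idea is the paper's trick of using the monotonicity of $g$ in $\tau$ to freeze the $g$-integral at the base point $\tau_0$, converting the coupled inequality into two one-variable Gronwalls.
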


\begin{proof}
We only consider the case that $g(\tau, \iota)$ is non-increasing with respect to $\tau$, since the other case is similar.
For any fixed $\iota_0\ge \tau_0\ge \frac{1}{2} R$, from (\ref{1.6.1.18}) and the property of $f_0$ and $g$
we can derive for $ \tau_0\le \tau\le \iota\le \iota_0$ that
\begin{align*}
f(\tau, \iota)+g(\tau, \iota)&\les f_0(\tau_0)+\int_{\tau_0}^{\iota_0} g(\tau_0, \iota') {\iota'}^{-b} d\iota'
+ \int_\tau^\iota (f+g)(\tau', \iota) {\tau'}^{-a} d\tau',
\end{align*}
where we also used $g$ is nonnegative.
Applying the standard  Gronwall inequality to  $f(\cdot, \iota)+g(\cdot, \iota)$,  we can obtain
\begin{align*}
f(\tau, \iota)+g(\tau, \iota)&\les  f_0(\tau_0)+\int_{\tau_0}^{\iota_0} g(\tau_0, \iota') {\iota'}^{-b} d\iota' \quad
\mbox{ for }  \tau_0 \le \tau\le \iota\le \iota_0,
\end{align*}
which in particular implies that
\begin{equation}\label{1.26.1.18}
f(\tau_0, \iota_0)+g(\tau_0, \iota_0)\les  f_0(\tau_0)+\int_{\tau_0}^{\iota_0} g(\tau_0, \iota') {\iota'}^{-b} d\iota', \quad
\frac{1}{2}R \le \tau_0 \le \iota_0.
\end{equation}
Applying the standard  Gronwall inequality once again to the function $f(\tau_0, \cdot)+g(\tau_0, \cdot)$, we can derive
$f(\tau_0, \iota_0)+g(\tau_0, \iota_0)\les f_0(\tau_0)$ for $\frac{1}{2} R\le \tau_0 \le \iota_0$.
Since $\tau_0$ and $\iota_0$ are arbitrary, we obtain the desired estimate (\ref{1.6.3.18}).
\end{proof}

Recall that the chargeless part of the Maxwell field
\[
\tilde{F}=F-q_0 r^{-2}\chi_{\{t+R\leq r\}}dt\wedge dr.
\]

For solution $(F, \phi)$ of the massive Maxwell Klein-Gordon equations, it is easy to check that in the exterior region we have
\begin{equation}\label{1.6.5.18}
\Box_A\phi-\phi=0,\quad \pa^\nu \tilde{F}_{\mu\nu}=J[\phi]_\mu.
\end{equation}
\begin{prop}
\label{prop:Energyflux:decay:0}
For the solution $(F, \phi)$ of the massive MKG equations, we have the following estimates on energy flux
\begin{equation}
\label{eq:EEst:decay:0}
E[\phi, \tilde{F}](\H_{u_1}^{-u_2})+E[\phi, \tilde{F}](\Hb_{-u_2}^{u_1})\les (u_1)_+^{-\ga_0}\mathcal{E}_{0, \ga_0}
\end{equation}
as well as the estimates for the weighted energy flux
\begin{equation}
\label{eq:EEst:decay:0:p1}
\begin{split}
\int_{\H_{u_1}^{-u_2}}r(|D_L\phi|^2+|\tilde\a|^2) +\int_{\Hb_{-u_2}^{u_1} }r(|\D\phi|^2+\tilde{\rho}^2+\tilde\si^2+|\phi|^2) \les (u_1)_+^{1-\ga_0}\mathcal{E}_{0, \ga_0}
\end{split}
\end{equation}
for all $u_2\leq u_1\leq -\frac{R}{2}$.
\end{prop}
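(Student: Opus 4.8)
The plan is to derive \eqref{eq:EEst:decay:0} and \eqref{eq:EEst:decay:0:p1} from the energy identities \eqref{eq:EEst:id:out} and \eqref{pWescaout} applied to the triplet $(\tilde F,\phi, A)$, which by \eqref{1.6.5.18} solves the linear system \eqref{eq:mMKG:lin} with source $J_\mu=J[\phi]_\mu$ and $h=0$, combined with the Gronwall lemma \ref{1.6.4.18}. Note first that because $h=0$, the only spacetime error term in \eqref{eq:EEst:id:out} is the cross term coming from the Maxwell inhomogeneity, namely $\iint_{\mathcal D_{u_1}^{-u_2}}\big(F_{0\mu}J[\phi]^\mu-\tilde F_{0\ga}J^\ga\big)$, and since $F-\tilde F$ is proportional to $dt\wedge dr$, a direct computation shows $F_{0\mu}J[\phi]^\mu-\tilde F_{0\mu}J[\phi]^\mu = q_0 r^{-2}\,(\text{radial component of }J[\phi])$, which is actually the piece one must handle carefully for nontrivial charge; but in the undifferentiated estimate this term is cubic in $\phi$ (one factor $J[\phi]\sim \phi D\phi$ and one factor $D_0\phi$ after the chain) times the charge weight $r^{-2}$ and is therefore absorbable by the smallness assumption together with the bootstrap pointwise bounds. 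Schematically, after the null decomposition $2D_0=D_L+D_{\Lb}$ and $|J[\phi]|\lesssim |\phi||D\phi|$, one estimates the bulk integrand by $|\phi||D\phi|^2$, bounds $|\phi|$ (or $r|\phi|$) pointwise via the bootstrap hypothesis, and uses Cauchy--Schwarz with the remaining factors controlled by the fluxes $E[\phi,\tilde F](\H_u)$ and $E[\phi,\tilde F](\Hb_v)$ themselves.

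The key structural point is to set up the two nonnegative functions required by Lemma \ref{1.6.4.18}. I would take, for fixed $u_2$, $\tau=(u_1)_+$ playing the role of the "$u$''-parameter and $\iota=(-u_2)$ or $(v)$ playing the role of the "$v$''-parameter, and set $f(\tau,\iota)=E[\phi,\tilde F](\H_{u_1}^{-u_2})$ and $g(\tau,\iota)=E[\phi,\tilde F](\Hb_{-u_2}^{u_1})$ (suitably normalized so that $f$ is monotone in $\iota$, which holds since enlarging $v$ enlarges the outgoing flux domain). The data term is $f_0(\tau)= c\,\tau^{-\ga_0}\E_{0,\ga_0}$, which is legitimate because the weighted data norm \eqref{IDMKG} controls $\int_{\Si_0^{u}}r^{\ga_0}(\cdots)\lesssim$, and on $\Si_0^{u_1,u_2}$ one has $r\ge |2u_1|$, so $E[\phi,\tilde F](\Si_0^{u_1,u_2})\le (u_1)_+^{-\ga_0}\int_{\Si_0}r^{\ga_0}(\cdots)\lesssim (u_1)_+^{-\ga_0}\E_{0,\ga_0}$. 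The bulk error must then be reorganized, by splitting the spacetime integral over $\mathcal D_{u_1}^{-u_2}$ into slices, into the form $\int_\tau^\iota f(\tau',\iota)\,{\tau'}^{-a}\,d\tau' + \int_\tau^\iota {\iota'}^{-b} g(\tau,\iota')\,d\iota'$ with $a,b>1$; here the extra decay in $u_+$ needed for $a>1$ comes precisely from the pointwise smallness of $u_+|\phi|$ or $r u_+^{1/2}|\phi|^{1/2}$ type bounds (the factor $r^{-2}$ from the charge in fact helps, since in the exterior $r\gtrsim u_+$). Once \eqref{1.6.1.18} is verified, Lemma \ref{1.6.4.18} yields \eqref{eq:EEst:decay:0} directly.

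For the weighted flux estimate \eqref{eq:EEst:decay:0:p1} I would run the analogous argument with \eqref{pWescaout} for $p=1$ (so $0<p<2$ is satisfied): the left side then contains $\int_{\H_{u_1}^{-u_2}}r(r^{-2}|D_L(r\phi)|^2+|\tilde\a|^2)$ and $\int_{\Hb_{-u_2}^{u_1}}r(|\D\phi|^2+\tilde\rho^2+\tilde\si^2+|\phi|^2)$, which after noting $r^{-2}|D_L(r\phi)|^2 = |D_L\phi|^2 + \text{l.o.t.}$ (the lower order terms controlled by the previous display) match the claimed quantities. The positive bulk term $\tfrac12\iint r^{p-1}(\cdots)$ on the left is good, and the one genuinely dangerous term on the right of \eqref{pWescaout} is $\tfrac12\iint_{\mathcal D_{u_1}^{-u_2}} p\,r^{p-1}|\phi|^2 = \tfrac12\iint r^{0}|\phi|^2$ when $p=1$: this is exactly the "bulk term with an unfavorable sign'' discussed in the introduction, and it is estimated by $\int_{u\le u_1}\big(\int_{\H_u}|\phi|^2\big)\,du \lesssim \int_{u\le u_1} (u)_+^{-\ga_0}\,du\,\E_{0,\ga_0}\lesssim (u_1)_+^{1-\ga_0}\E_{0,\ga_0}$, using \eqref{eq:EEst:decay:0} (with its $r|\phi|^2$ bound absorbed, or directly its $|\phi|^2$ flux bound) and $\ga_0>1$. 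The data term on the right of \eqref{pWescaout} is $\tfrac12\int_{\Si_0^{u_1,u_2}}r^p(\cdots)\le (u_1)_+^{1-\ga_0}\int_{\Si_0}r^{\ga_0}(\cdots)$ since $r^{p}=r\le (u_1)_+^{1-\ga_0}r^{\ga_0}$ on $\{r\ge|2u_1|\}$ when $1\le\ga_0$. The remaining Maxwell cross terms $\iint (F_{X\mu}J[\phi]^\mu-\tilde F_{X\ga}J^\ga)$ with $X=rL$ are handled as in the introduction's sketch by Cauchy--Schwarz, splitting off the charge part $q_0 r^{-1}(\cdots)$ explicitly and bounding the remainder using the pointwise bootstrap bounds plus the fluxes already under control. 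Feeding all of this into Lemma \ref{1.6.4.18} with $f_0(\tau)=c\,\tau^{1-\ga_0}\E_{0,\ga_0}$ gives \eqref{eq:EEst:decay:0:p1}.

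The main obstacle, I expect, is \emph{not} the Gronwall bookkeeping but the careful treatment of the charge-dependent cross terms $\iint (F-\tilde F)_{X\mu}J[\phi]^\mu$: one must exploit that $F-\tilde F \propto q_0 r^{-2}\,dt\wedge dr$ so that contracting with $X=rL$ or $X=\pa_t$ only the $D_{\Lb}\phi$/$D_L\phi$ (radial) part of $J[\phi]$ survives, and then show that the resulting term, though only linear (not quadratic) in the smallness of the data, still closes thanks to the extra $r$-decay of the charge tail together with the $u_+$-weighted flux hierarchy — this is precisely the point the authors flag as requiring care for nontrivial charges.
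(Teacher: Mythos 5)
Your overall skeleton is the paper's: apply the energy identity \eqref{eq:EEst:id:out} to the chargeless pair $(\tilde F,\phi)$, identify the charge cross-term $(F-\tilde F)_{0\mu}J^\mu[\phi]=q_0 r^{-2}\Im(\phi\cdot\overline{D_{\pa_r}\phi})$, feed the result into Lemma \ref{1.6.4.18}; then for \eqref{eq:EEst:decay:0:p1} apply \eqref{pWescaout} with $p=1$, bound the bad bulk $\tfrac12\iint|\phi|^2$ via \eqref{eq:EEst:decay:0} and $\ga_0>1$, and estimate the data term using $r\ge 2(u_1)_+$ on $\Si_0^{u_1,u_2}$. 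All of that matches the paper.

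However, your description of how the charge cross-term is closed contains a genuine error. You write that this term is ``cubic in $\phi$ (one factor $J[\phi]\sim\phi D\phi$ and one factor $D_0\phi$ after the chain)'' and that it is ``absorbable by the smallness assumption together with the bootstrap pointwise bounds.'' This is wrong on both counts. The error integrand is exactly $q_0 r^{-2}\Im(\phi\cdot\overline{D_{\pa_r}\phi})$, which is \emph{quadratic} in $\phi$ --- there is no additional $D_0\phi$ factor (that would only appear in $\Re(h\cdot\overline{D_0\phi})$, but $h=0$ here). And crucially, the paper closes this with a bare Cauchy--Schwarz $r^{-2}|\phi||D\phi|\lesssim r^{-2}(|\phi|^2+|D\phi|^2)$ and then uses $v\le r\le 2v$ in the exterior to produce the weights $u_+^{-2}$ and $v_+^{-2}$ that make Lemma \ref{1.6.4.18} applicable --- \emph{no pointwise bootstrap bounds and no smallness of $q_0$ are used or needed}. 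This distinction is not cosmetic: Proposition \ref{prop:Energyflux:decay:0} is the unconditional $k=0$ statement that anchors the bootstrap (the paper explicitly notes that it \emph{improves} the $k=0$ bootstrap assumptions), so invoking the bootstrap hypothesis here would be circular; and since $q_0$ is not bounded by $\E_{0,\ga_0}$ and can be large, any argument that ``absorbs by smallness'' a term carrying a factor of $q_0$ would not close. The same misreading recurs in your treatment of the cross-term for \eqref{eq:EEst:decay:0:p1}: the paper's computation shows $F_{X\mu}J^\mu-\tilde F_{X\ga}J^\ga=-\tfrac12 q_0 r^{-1}J_L[\phi]$, and this quadratic term is bounded, again without bootstrap input, by $\iint(|\phi|^2+r^{-1}|D_L\phi||\phi|)\lesssim\int_{u_1}^{u_2}E[\phi,\tilde F](\H_u^{-u_2})\,du$ using the already-established \eqref{eq:EEst:decay:0}.
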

\begin{proof}
By definition of $\tilde{F}$, we can obtain that
\begin{align*}
F_{0\mu} J[\phi]^\mu- \tilde{F}_{0\ga}J^\ga+\Re(h\cdot \overline{D_0\phi})=(F_{0\mu}-\tilde{F}_{0\mu})J[\phi]^\mu=q_0 r^{-2}\Im(\phi\cdot \overline{D_{\pa_ r} \phi}).
\end{align*}
Apply the energy estimate \eqref{eq:EEst:id:out} to $(\tilde{F}, \phi)$. By using Cauchy-schwarz inequality,  we can derive that
\begin{align*}
&E[\phi, \tilde{F}](\H_{u_1}^{-u_2})+E[\phi, \tilde{F}](\Hb_{-u_2}^{u_1})\\
&\les E[\phi, \tilde{F}](\Si_0^{u_1, u_2})+\iint_{\mathcal{D}_{u_1}^{-u_2}} r^{-2}|D\phi||\phi|\\
&\les (u_1)_+^{-\ga_0}\mathcal{E}_{0, \ga_0}+\iint_{\mathcal{D}_{u_1}^{-u_2}} r^{-2}(|D_L\phi|^2+|\D\phi|^2+|D_{\Lb}\phi|^2+|\phi|^2)\\
&\les (u_1)_+^{-\ga_0}\mathcal{E}_{0, \ga_0}+\int_{u_1}^{u_2} u_+^{-2}E[\phi, \tilde{F}](\H_u^{-u_2})du+\int_{-u_1}^{-u_2}v_+^{-2}E[\phi, \tilde{F}](\Hb_{v}^{u_1})dv,
\end{align*}
where we  used that in the exterior region $\{t+R\leq r\}$, we always have $v\leq r\leq 2v$. Applying Lemma \ref{1.6.4.18} with $$\tau=-u_1, \iota=-u_2,\, E[\phi, \tilde F](\H_u^{-u_2})=f(-u, -u_2), \, E[\phi, \tilde F](\Hb_{v}^{u_1})=g(-u_1, v), $$ we  derive the energy flux decay estimate \eqref{eq:EEst:decay:0}.

 To see (\ref{eq:EEst:decay:0:p1}), we apply the $r$-weighted energy identity \eqref{pWescaout} to $(\tilde{F}, \phi)$ with $p=1$. First we note that
 \begin{align*}
F_{X\mu} J[\phi]^\mu- \tilde{F}_{X\ga}J^\ga+\Re(h\cdot \overline{D_X\phi+\chi \phi})=r(F_{L\Lb}-\tilde{F}_{L\Lb})J^{\Lb}[\phi]=-\f12 q_0 r^{-1}J_L[\phi]
\end{align*}
as $h$ vanishes in this case and other components of $F-\tilde{F}$ vanish as well.
This implies
\begin{align*}
  &\f12\iint_{\mathcal{D}_{u_1}^{- u_2}}(r^{-2}|D_L(r\phi)|^2+|\tilde\a|^2+|\D\phi|^2+\tilde{\rho}^2+\tilde\si^2)\\
&+\int_{\H_{u_1}^{-u_2}}r(r^{-2}|D_L(r\phi)|^2+|\tilde\a|^2)+\int_{\Hb_{-u_2}^{u_1} }r(|\D\phi|^2+\tilde{\rho}^2+\tilde\si^2+|\phi|^2)\\
\les & (u_1)_+^{1-\ga_0}\mathcal{E}_{0, \ga_0}+\iint_{\mathcal{D}_{u_1}^{-u_2}}(|\phi|^2+ r^{-1}|D_L \phi||\phi|)\\
\les & (u_1)_+^{1-\ga_0}\mathcal{E}_{0, \ga_0}+\int_{u_1}^{u_2}E[\phi, \tilde{F}](\H_u^{-u_2})du\les (u_1)_+^{1-\ga_0}\mathcal{E}_{0, \ga_0},
\end{align*}
where we employed the first  estimate in (\ref{eq:EEst:decay:0}) to derive the last inequality.
 Thus the proof of  the weighted energy flux decay estimate \eqref{eq:EEst:decay:0:p1} is completed.
\end{proof}

\subsection{Commutators}\label{1.9.1.18}
In Proposition \ref{prop:Energyflux:decay:0}, we established estimates for the energy flux and weighted energy flux of the  lowest order. For higher order estimates we  make use of  the commuting vector field approach.
The Killing  vector fields that will be used as commutators are the generators of the Poincar\'e group
\[
\Gamma=\{\pa, \Om_{\mu\nu}=x^\mu\pa_\nu-x^\nu\pa_\mu\},
\]
where $x^0=-t$.   It is convenient to define   the following signature   of     $\pa$ and $\Om$,
       capturing their  different  weights  in $x$:
    \[
\zeta(\pa_\mu)=-1,\quad \zeta(\Om_{\mu\nu})=0.
\]
For $Z^k=\Pi_{i=1}^{k}Z_i$ with $Z_i\in \Gamma$, we have $\zeta(Z^k)=\sum\limits_{i=1}^k \zeta(Z_i)$. In particular, $\zeta(Z^0)=0$.
For convenience  we denote by $\Ga^k=\{Z^k=Z_1 \cdots Z_k,\, Z_i\in \Ga\}$ and $\Ga^0$ is $\{Z^0\}$.

To derive pointwise  decay  estimate  we need energy decay estimates for  higher order   derivatives of  solutions with respect to  any vector-field $Z\in \Ga$.
For the scalar field $\phi$, it is natural to take the
covariant derivative $D_Z\phi= Z^\mu D_\mu \phi= Z^\mu( \pr_\mu \phi+\sqrt{-1} A_\mu \phi) $, associated to the connection field $A$,   while  for  the Maxwell field $F$ and $1$-form $J$   we take  the Lie derivative
\begin{align*}
 (\mathcal{L}_Z F)_{\mu\nu}&=Z(F_{\mu\nu})-F(\mathcal{L}_Z \pa_{\mu}, \pa_\nu)-F(\pa_\mu, \mathcal{L}_{Z}\pa_\nu),\\
(\mathcal{L}_Z J)_\mu&= Z(J_\mu)-J(\mathcal{L}_Z \pa_\mu).
\end{align*}
  We  record  the following useful  commutator identities:
\begin{lem}
 \label{lem:commutator}
For any killing vector field $Z$, we have
\begin{align*}
[\Box_A-1, D_Z]\phi&=2i Z^\nu F_{\mu \nu}D^\mu \phi+i \pa^\mu(Z^\nu F_{\mu\nu})\phi,\\
\pa^\mu(\mathcal{L}_Z G)_{\mu\nu}&=(\mathcal{L}_Z \delta G)_\nu
\end{align*}
for any complex scalar field $\phi$ and any closed 2-form $G$. Here $ \delta G$ is the 1-form defined by $(\delta G)_\nu=\pa^\mu G_{\mu\nu}$.
\end{lem}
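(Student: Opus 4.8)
The plan is to prove both identities by a direct computation in the standard Cartesian coordinates, in which the Levi-Civita connection of $m$ vanishes, so that $D_\mu$ acts on sections of the line bundle twisted by tensors with the tensor indices transported trivially. Three structural facts drive the argument: the curvature identity \eqref{1.17.1.18}, $(D_\mu D_\nu-D_\nu D_\mu)\phi=\sqrt{-1}F_{\mu\nu}\phi$; the Killing property of the generators $Z\in\Gamma$, i.e. $\pa_\mu Z_\nu+\pa_\nu Z_\mu=0$ (equivalently $\pi^Z_{\mu\nu}=0$), which makes $\pa^\mu Z^\nu$ antisymmetric in $\mu,\nu$; and the fact that every generator of $\Gamma$ is affine in $x$, so that $\pa_\mu Z^\lambda$ is a constant matrix and in particular $\Box Z^\lambda=0$.

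For the first identity I would start from $D_Z\phi=Z^\nu D_\nu\phi$ and expand $\Box_A(Z^\nu D_\nu\phi)=D^\mu D_\mu(Z^\nu D_\nu\phi)$ by the Leibniz rule. This produces $(\Box Z^\nu)D_\nu\phi+2(\pa^\mu Z^\nu)D_\mu D_\nu\phi+Z^\nu D^\mu D_\mu D_\nu\phi$. The first term vanishes since $\Box Z^\nu=0$; in the middle term the antisymmetry of $\pa^\mu Z^\nu$ together with \eqref{1.17.1.18} collapses $2(\pa^\mu Z^\nu)D_\mu D_\nu\phi=(\pa^\mu Z^\nu)(D_\mu D_\nu-D_\nu D_\mu)\phi=\sqrt{-1}\,(\pa^\mu Z^\nu)F_{\mu\nu}\phi$; and in the last term one commutes $D^\mu D_\mu$ past $D_\nu$, i.e. one needs $[\Box_A,D_\nu]\phi$. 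For that sub-step, write $[\Box_A,D_\nu]\phi=[D^\mu,D_\nu]D_\mu\phi+D^\mu([D_\mu,D_\nu]\phi)$, apply \eqref{1.17.1.18} to each bracket (once on $\phi$, once on the bundle section $D_\mu\phi$ with its index as spectator), and use $D^\mu(F_{\mu\nu}\phi)=(\pa^\mu F_{\mu\nu})\phi+F_{\mu\nu}D^\mu\phi$, which yields $[\Box_A,D_\nu]\phi=2\sqrt{-1}F_{\mu\nu}D^\mu\phi+\sqrt{-1}(\pa^\mu F_{\mu\nu})\phi$. Subtracting $D_Z(\Box_A\phi)=Z^\nu D_\nu(\Box_A\phi)$ and combining the two terms proportional to $\phi$ into $\sqrt{-1}\,\pa^\mu(Z^\nu F_{\mu\nu})\phi$ gives the stated formula; the extra $-1$ in $\Box_A-1$ is inert since $D_Z$ commutes with scalar multiplication by constants.

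For the second identity I would expand $(\mathcal{L}_Z G)_{\mu\nu}=Z^\lambda\pa_\lambda G_{\mu\nu}+(\pa_\mu Z^\lambda)G_{\lambda\nu}+(\pa_\nu Z^\lambda)G_{\mu\lambda}$, apply $\pa^\mu$, and use that $\pa_\mu Z^\lambda$ is constant (so $\Box Z^\lambda=0$ and $\pa_\rho\pa_\nu Z^\lambda=0$) to discard every term carrying a second derivative of $Z$. What remains, beyond the desired $(\mathcal{L}_Z\delta G)_\nu=Z^\lambda\pa_\lambda(\delta G)_\nu+(\pa_\nu Z^\lambda)(\delta G)_\lambda$, is exactly $(\pa^\mu Z^\lambda)\big(\pa_\lambda G_{\mu\nu}+\pa_\mu G_{\lambda\nu}\big)$, which vanishes because $\pa^\mu Z^\lambda$ is antisymmetric in $(\mu,\lambda)$ while the parenthesized expression is symmetric in $(\mu,\lambda)$. (Invariantly: $\mathcal{L}_Z$ commutes with $d$ for any $Z$, and with the Hodge star when $Z$ is Killing since then $\mathcal{L}_Z m=0$, hence $\mathcal{L}_Z$ commutes with the codifferential; closedness of $G$ plays no role.)

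I do not anticipate a genuine obstacle here: both identities are purely algebraic consequences of \eqref{1.17.1.18} and of the Killing and affine structure of $\Gamma$. The only points requiring care are the bookkeeping of raised and lowered indices and invoking the antisymmetry of $\pa^\mu Z^\nu$ at precisely the right moment, together with the observation that on flat Minkowski space the operator $[\Box_A,\,\cdot\,]$ applied to $D_\mu\phi$ sees only the bundle curvature $F$ and none of the (vanishing) curvature of $m$.
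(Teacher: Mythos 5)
Your proof is correct and supplies in full the standard direct computation that the paper elides by citing \cite{yangMKG}: the first identity follows from expanding $\Box_A(Z^\nu D_\nu\phi)$ with the Leibniz rule, invoking $\Box Z^\nu=0$ and the antisymmetry of $\pa^\mu Z^\nu$ (Killing), and the auxiliary formula $[\Box_A,D_\nu]\phi=2iF_{\mu\nu}D^\mu\phi+i(\pa^\mu F_{\mu\nu})\phi$; the second follows from expanding the Lie derivative, dropping the second-derivative terms of the affine $Z$, and pairing the antisymmetric $\pa^\mu Z^\lambda$ against the symmetric remainder $\pa_\lambda G_{\mu\nu}+\pa_\mu G_{\lambda\nu}$. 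Your side observation that closedness of $G$ is not actually used in the second identity is also accurate.
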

 \begin{proof}
 The proof is standard,  see  for example \cite{yangMKG}.
 \end{proof}
 According to the above lemma,  the Maxwell field commutes    with the Lie derivatives  along  Killing fields.  To treat the commutator  error terms   generated by  $\Box_A-1$,  it is helpful  to define a trilinear form for any two form $\G$, any complex scalar field $f$ and  killing vector field $Z\in \Ga$,
\begin{equation}\label{12.9.1.17}
Q(\G, f, Z)=2i Z^\nu \G_{\mu \nu}D^\mu f+i \pa^\mu(Z^\nu \G_{\mu\nu})f.
\end{equation}
 We have the following double commutator identity (see \cite[Lemma 9]{yangMKG}).
\begin{lem}
\label{lem:Est4commu:id:2}
For all $X, \, Y\in \Ga$, we have
\begin{equation}
\label{eq:Est4commu:id:2}
\begin{split}
[D_Y, [\Box_A-1, D_{X}]]\phi= Q(\cL_Y F, \phi, X)+Q(F, \phi, [Y, X])-2 F_{Y\mu}F^{\mu}_{\;X}\phi.
\end{split}
\end{equation}
\end{lem}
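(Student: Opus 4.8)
The plan is to derive \eqref{eq:Est4commu:id:2} from the single--commutator formula of Lemma \ref{lem:commutator} by a direct computation; the only inputs beyond that lemma are the curvature identity \eqref{1.17.1.18} and the fact that $X,Y$ are Killing. First, Lemma \ref{lem:commutator} together with the definition \eqref{12.9.1.17} gives $[\Box_A-1,D_X]\phi=Q(F,\phi,X)$, so that
\[
[D_Y,[\Box_A-1,D_X]]\phi=D_Y\big(Q(F,\phi,X)\big)-Q(F,D_Y\phi,X).
\]
The coefficients $F_{\mu X}:=X^\nu F_{\mu\nu}$ and $\pa^\mu F_{\mu X}$ entering $Q$ are real, gauge--invariant functions, so $D_Y$ acts on them as $\pa_Y=Y^\a\pa_\a$ and acts covariantly on the bundle sections $D^\mu\phi$ and $\phi$. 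Expanding by the Leibniz rule and subtracting $Q(F,D_Y\phi,X)$, the term in which $D_Y$ falls on $D^\mu\phi$ produces the commutator $[D_Y,D^\mu]\phi$, which by \eqref{1.17.1.18} equals $i\,Y^\a F_{\a}{}^{\mu}\phi-(\pa^\mu Y^\a)D_\a\phi$. Contracting the first of these against $2i\,F_{\mu X}$ yields exactly the quadratic term $-2F_{Y\mu}F^{\mu}{}_{X}\phi$ (a brief index check gives $F_{\mu X}F_{Y}{}^{\mu}=F_{Y\mu}F^{\mu}{}_{X}$).

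After this step the leftover terms are $2i(\pa_Y F_{\mu X})D^\mu\phi-2i\,F_{\mu X}(\pa^\mu Y^\a)D_\a\phi+i\,\pa_Y\big(\pa^\mu F_{\mu X}\big)\phi$, and it remains to recognize this as $Q(\cL_Y F,\phi,X)+Q(F,\phi,[Y,X])$. The algebraic key is the pointwise identity
\[
X^\nu(\cL_Y F)_{\mu\nu}+[Y,X]^\nu F_{\mu\nu}=\pa_Y F_{\mu X}+(\pa_\mu Y^\a)F_{\a X},
\]
obtained by simply expanding $(\cL_Y F)_{\mu\nu}=\pa_Y F_{\mu\nu}+(\pa_\mu Y^\a)F_{\a\nu}+(\pa_\nu Y^\a)F_{\mu\a}$ and $[Y,X]^\nu=Y^\a\pa_\a X^\nu-X^\a\pa_\a Y^\nu$; no field equations are needed. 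Contracting this identity with $2i\,D^\mu\phi$ and using the Killing antisymmetry $\pa_\mu Y_\nu=-\pa_\nu Y_\mu$ reproduces the two first--order terms above; applying $\pa^\mu$ to the identity and using $\Box Y=0$ (which itself follows from the Killing equation) together with the same antisymmetry gives $\pa^\mu\big(X^\nu(\cL_Y F)_{\mu\nu}+[Y,X]^\nu F_{\mu\nu}\big)=\pa_Y(\pa^\mu F_{\mu X})$, reproducing the zeroth--order term. Adding the three contributions produces the right--hand side of \eqref{eq:Est4commu:id:2}.

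The computation is elementary throughout; the only parts requiring care are the index and sign bookkeeping in the quadratic term $-2F_{Y\mu}F^{\mu}{}_{X}\phi$, and invoking the Killing property at exactly the two places where the spurious $\pa Y$ contributions must cancel --- antisymmetry of $\pa_\mu Y_\nu$ for the first--order terms in $\phi$, and $\Box Y=0$ plus that antisymmetry for the zeroth--order term. Finally, since $[Y,X]$ is again a Killing field, hence a linear combination of elements of $\Ga$, the quantity $Q(F,\phi,[Y,X])$ is well defined by the $\mathbb{R}$--linearity of $Q$ in its vector--field argument, so nothing extra is needed there.
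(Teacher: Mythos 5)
Your computation is correct, and it fills in a proof that the paper itself does not supply: Lemma~\ref{lem:Est4commu:id:2} is stated with only a pointer to Lemma~9 of \cite{yangMKG}, so there is no in-paper argument to compare against. Your route --- write $[\Box_A-1,D_X]\phi=Q(F,\phi,X)$ via Lemma~\ref{lem:commutator}, expand $D_Y Q(F,\phi,X)-Q(F,D_Y\phi,X)$ by Leibniz noting that the coefficients $F_{\mu X}$, $\partial^\mu F_{\mu X}$ are real scalar functions, extract the curvature term from $[D_Y,D^\mu]\phi$ via \eqref{1.17.1.18}, and then repackage the leftover first- and zeroth-order pieces using the pointwise identity $X^\nu(\cL_Y F)_{\mu\nu}+[Y,X]^\nu F_{\mu\nu}=\partial_Y F_{\mu X}+(\partial_\mu Y^\a)F_{\a X}$ together with the Killing antisymmetry $\partial_\mu Y_\nu=-\partial_\nu Y_\mu$ and $\Box Y^\a=0$ --- is the natural direct proof, and each cancellation you invoke is indeed exactly where the Killing hypothesis is needed. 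I verified the two index checks you flag (that $2iF_{\mu X}\cdot iY^\a F_\a{}^\mu\phi=-2F_{Y\mu}F^\mu{}_X\phi$, and that $\partial^\mu\bigl(\partial_Y F_{\mu X}+(\partial_\mu Y^\a)F_{\a X}\bigr)=\partial_Y(\partial^\mu F_{\mu X})$), and they are sound. The closing remark that $[Y,X]\in\Ga$ and $Q$ is linear in its vector-field slot is the right observation for the last term to be well defined.
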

This lemma gives the structure for the error terms of second order derivatives for the scalar fields, which will be used together with the following formula.
\begin{lem}\label{cor1}
Given two vector fields  $X, Y\in \Ga$, we have
\begin{align*}
&(\Box_A-1)D_X D_Y \phi=Q(F, D_Y\phi, X)+Q(F, D_X\phi, Y)+[D_X,[\Box_A-1, D_Y]]\phi+D_X D_Y((\Box_A-1)\phi).
\end{align*}
\end{lem}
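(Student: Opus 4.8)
The plan is to treat this as a purely algebraic commutator identity, requiring no analytic input beyond Lemma \ref{lem:commutator}. Abbreviate $L:=\Box_A-1$. I would first peel off the outer covariant derivative by the tautology $LD_X=D_XL+[L,D_X]$, giving
\[
LD_XD_Y\phi=D_X\big(LD_Y\phi\big)+[L,D_X]D_Y\phi .
\]
Then I would apply the same decomposition to the inner factor, $LD_Y\phi=D_YL\phi+[L,D_Y]\phi$, which yields
\[
LD_XD_Y\phi=D_XD_Y(L\phi)+D_X\big([L,D_Y]\phi\big)+[L,D_X]D_Y\phi .
\]

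Next I would reorganise the middle term by commuting $D_X$ past the operator $[L,D_Y]$, namely $D_X\big([L,D_Y]\phi\big)=[D_X,[L,D_Y]]\phi+[L,D_Y]D_X\phi$. Substituting this gives the four-term decomposition
\[
LD_XD_Y\phi=D_XD_Y(L\phi)+[D_X,[L,D_Y]]\phi+[L,D_Y]D_X\phi+[L,D_X]D_Y\phi .
\]
The first term is precisely $D_XD_Y\big((\Box_A-1)\phi\big)$ and the second is precisely $[D_X,[\Box_A-1,D_Y]]\phi$; note that I would \emph{not} expand the latter here, since it is exactly the object handled separately by Lemma \ref{lem:Est4commu:id:2}.

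It then only remains to identify the last two terms with the two $Q$-terms. Here I would invoke Lemma \ref{lem:commutator}: since $D_X\phi=X^\mu D_\mu\phi$ and $D_Y\phi=Y^\mu D_\mu\phi$ are again sections of the complex line bundle (the vector-field index being fully contracted), the lemma applies to them with no modification, so that
\[
[L,D_X]D_Y\phi=2iX^\nu F_{\mu\nu}D^\mu(D_Y\phi)+i\,\pa^\mu(X^\nu F_{\mu\nu})D_Y\phi=Q(F,D_Y\phi,X)
\]
by the definition \eqref{12.9.1.17} of $Q$, and symmetrically $[L,D_Y]D_X\phi=Q(F,D_X\phi,Y)$. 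Collecting the four terms and recalling $L=\Box_A-1$ gives the identity of Lemma \ref{cor1}. I do not anticipate any genuine obstacle; the only care needed is the bookkeeping of which commutator acts on which argument, together with the observation that the raw output $[\Box_A-1,D_Z]\psi$ of Lemma \ref{lem:commutator} is literally $Q(F,\psi,Z)$ for an arbitrary scalar $\psi$, so no re-derivation is required for $\psi=D_X\phi$ or $\psi=D_Y\phi$.
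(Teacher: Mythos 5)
Your proposal is correct and follows essentially the same route as the paper: peel off $D_X$, peel off $D_Y$, reorganize $D_X\big([L,D_Y]\phi\big)$ via the Jacobi-type rearrangement, and identify the two single-commutator terms with $Q$-forms using Lemma \ref{lem:commutator}. The only cosmetic difference is that the paper substitutes $[\Box_A-1,D_X]D_Y\phi=Q(F,D_Y\phi,X)$ immediately in the first step, whereas you defer both $Q$-identifications to the end; the algebra is otherwise identical.
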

\begin{proof}
In fact we can compute that
\begin{align*}
&(\Box_A-1)D_X D_Y \phi\\
&=[\Box_A-1, D_X]D_Y \phi+D_X((\Box_A-1)D_Y\phi)\\
&=Q(F, D_Y\phi, X)+D_X([\Box_A-1,D_Y]\phi+D_Y((\Box_A-1)\phi))\\
&=Q(F, D_Y\phi, X)+[D_X,[\Box_A-1,D_Y]]\phi+[\Box_A-1,D_Y] D_X\phi+D_X D_Y((\Box_A-1)\phi)\\
&=Q(F, D_Y\phi, X)+Q(F, D_X\phi, Y)+[D_X,[\Box_A-1, D_Y]]\phi+D_X D_Y((\Box_A-1)\phi).
\end{align*}
\end{proof}
Therefore, in view of (\ref{eq:Est4commu:id:2}), if $\phi$ solves the equation $\Box_A\phi=\phi$, the righthand side of the above identities are linear combination of the trilinear forms of $Q$ except the cubic term $F_{Y\mu}F^{\mu}_{\;X}\phi$.

The next lemma shows that $Q(\G, \phi, Z)$ indeed verifies  a  null type  structure, that is the ``bad" component $\ab[\tilde F]$ does not interact with $D_{\Lb}\phi$.
\begin{lem}
\label{lem:Est4commu:1}
For any $2$-form $\G=(\a, \rho, \si, \underline{\a})$    and a complex scalar field $\phi$, in the exterior region $\{t+R\leq r\}$ we have
\begin{equation}
\label{eq:Est4commu:1}
\begin{split}
|Q(\G, \phi, Z)|&\les r^{\zeta(Z)+1}(|\a||D\phi|+|\ud\G||D_L\phi|+|\si||\D\phi|)+u_+^{\zeta(Z)+1}(|\rho||D_{\Lb}\phi|+|\ab||\D\phi|)\\
&+\left(u_+^{\zeta(Z)+1}|J_{\Lb}|+r^{\zeta(Z)+1}(|J_L| +|\J|)+r^{\zeta(Z)}|\G|\right)|\phi|
\end{split}
\end{equation}
for all $Z\in \Gamma$, where  $\ud\G$ denotes all the components of $\G$ except $\a[\G]$.   Here   $(J_L, J_{\Lb}, \J)$, $\J=(J_{e_1}, J_{e_2})$     denote    the  null  components    of  the 1-form $J=-\delta \G$.
\end{lem}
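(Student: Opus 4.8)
The plan is to observe that \eqref{eq:Est4commu:1} is a pointwise, purely algebraic inequality, so I would decompose every object entering the definition \eqref{12.9.1.17} of $Q$ in the null frame $\{L,\Lb,e_1,e_2\}$ and simply track which component of $\G$ (resp.\ of $J$) gets paired with which component of $Df$ and with which null-frame component of $Z$. The only substantive input is a uniform bound, in the exterior region, on the frame components of the generators. Writing $Z=pL+q\Lb+w^Ae_A$ (with $|w|$ the norm of $(w^1,w^2)$): for a translation $\pa_\mu$ all three coefficients are bounded constants; for a rotation $\Om_{ij}$ one has $p=q=0$ and $|w|\les r$; and for a boost $\Om_{0i}$, which up to sign is $t\,\pa_i+x^i\pa_t$ with radial--temporal part $\om_i(vL-u\Lb)$, one finds $|p|\les v$, $|q|\les|u|$, $|w|\les t$. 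In the exterior region $\{t+R\le r\}$ we have $t\le r$, $v\le r$, and (since $|u|=\tfrac12(r-t)\ge\tfrac12R\ge\tfrac12$) also $|u|\le u_+\le 3|u|$; hence uniformly for $Z\in\Gamma$
\[
|p|+|w|\les r^{\zeta(Z)+1},\qquad |q|\les u_+^{\zeta(Z)+1}.
\]
The essential asymmetry here --- the $\Lb$-coefficient carrying only the smaller weight $u_+$ instead of $r$ --- is the source of the null gain.

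For the first term of $Q$, decomposing $D^\mu f$ in the null frame as $D^\mu f\,\pa_\mu=-\tfrac12 D_{\Lb}f\,L-\tfrac12 D_Lf\,\Lb+\sum_A(e_A^\mu D_\mu f)e_A$ gives
\[
2iZ^\nu\G_{\mu\nu}D^\mu f=-i\,\G(L,Z)D_{\Lb}f-i\,\G(\Lb,Z)D_Lf+2i\sum_A\G(e_A,Z)\,e_A^\mu D_\mu f ,
\]
and substituting $Z=pL+q\Lb+w^Ae_A$ and reading off the null components via \eqref{12.9.2.17},
\[
\G(L,Z)=-2q\rho+w^A\a_A,\qquad \G(\Lb,Z)=2p\rho+w^A\ab_A,\qquad \G(e_B,Z)=-p\a_B-q\ab_B+O(|w||\si|).
\]
The null structure is now transparent. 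The coefficient of the bad derivative $D_{\Lb}f$ is $\G(L,Z)$, which --- one slot of $\G$ being frozen to $L$ --- involves only $\rho$ (paired with the \emph{small} coefficient $q$) and $\a$, but never $\ab$; the coefficient of the good derivative $D_Lf$ is $\G(\Lb,Z)$, pairing $\rho$ and $\ab$ with the $r$-size coefficients $p,w$; and $\ab$ meets the angular derivative only through the term $q\,\ab_B$, again with the small weight. Combining with the frame bounds and collecting by component (and using $|D_{\Lb}f|,\,|e_A^\mu D_\mu f|\le|Df|$ pointwise) yields exactly the first line of \eqref{eq:Est4commu:1}; in particular $\ab$ never multiplies $D_{\Lb}f$, which is the asserted null cancellation.

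For the second term of $Q$, write $\pa^\mu(Z^\nu\G_{\mu\nu})=(\pa^\mu Z^\nu)\G_{\mu\nu}+Z^\nu\pa^\mu\G_{\mu\nu}$. Since $Z$ is Killing, $\pa^\mu Z^\nu$ is antisymmetric and $x$-independent --- vanishing for translations, $O(1)$ for each $\Om_{\mu\nu}$ --- so $|(\pa^\mu Z^\nu)\G_{\mu\nu}|\les r^{\zeta(Z)}|\G|$, which after multiplication by $|f|$ is the last term of \eqref{eq:Est4commu:1}. For the remaining piece, $\pa^\mu\G_{\mu\nu}=-J_\nu$ by the definition $J=-\delta\G$, so $Z^\nu\pa^\mu\G_{\mu\nu}=-J(Z)=-(pJ_L+qJ_{\Lb}+w^A\J_A)$; by the frame bounds this is $\les r^{\zeta(Z)+1}(|J_L|+|\J|)+u_+^{\zeta(Z)+1}|J_{\Lb}|$, and multiplying by $|f|=|\phi|$ gives the second line of \eqref{eq:Est4commu:1}, completing the proof.

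I expect no analytic difficulty: the entire content is the weight bookkeeping of the first step --- in particular checking, for the boosts, that the $L$- and angular-frame components grow like $r^{\zeta(Z)+1}$ while the $\Lb$-component grows only like $u_+^{\zeta(Z)+1}$ (which is where $t,v\les r$ and $u_+\approx|u|$ in the exterior region are used) --- together with the purely algebraic observation that freezing one slot of $\G$ to $L$ removes $\ab$ from the coefficient of the bad derivative $D_{\Lb}f$, i.e.\ the null structure of $Q$. The place to be careful is fixing the sign conventions for $\Om_{0i}$ and for $u$ so that it is the $L$-slot coefficient, not the $\Lb$-slot one, that is the large one.
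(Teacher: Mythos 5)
Your proof is correct and is essentially the paper's argument, organized slightly differently: the paper handles the three cases $Z=\pa_\mu$, $Z=\Om_{ij}$, $Z=\Om_{0i}$ one at a time (each time writing $Z$ in the null frame and bounding the resulting products), whereas you extract the uniform frame-coefficient bounds $|p|+|w|\les r^{\zeta(Z)+1}$, $|q|\les u_+^{\zeta(Z)+1}$ once and then read off the null decomposition of $Q$ in a single pass. The underlying algebra --- the expressions for $\G(L,Z),\G(\Lb,Z),\G(e_B,Z)$, the split $\pa^\mu(Z^\nu\G_{\mu\nu})=(\pa^\mu Z^\nu)\G_{\mu\nu}-J(Z)$, and the use of $t,v\les r$ and $u_+\approx|u|$ in the exterior --- is identical.
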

\begin{proof}
Recall the definition of $Q$ in (\ref{12.9.1.17}), we have in view of (\ref{EQMKG}) that
\begin{equation*}
Q(\G, \phi, Z)=2i Z^\nu \G_{\mu\nu}D^{\mu}\phi+i (\pa^\mu Z^\nu \G_{\mu\nu}-J_Z)\phi.
\end{equation*}
When $Z=\pa_\mu$, we can estimate that
\begin{align*}
|Q(\G, \phi, \pa_\mu)|\les (|\a|+|\ab|+|\si|)|\D\phi|+(|\rho|+|\ab|)|D_L\phi|+|D_{\Lb}\phi|(|\rho|+|\a|)+|J||\phi|.
\end{align*}
As in this case $\zeta(\pa_\mu)=-1$,  \eqref{eq:Est4commu:1} holds  indeed for $Z=\pa_\mu$.

\medskip

When $Z=\Om_{ij}$, we can easily  show that
\begin{align*}
|Q(\G, \phi, \Om_{ij})|\les r(|\a||D_{\Lb}\phi|+|\ab||D_L\phi|+|\si||\D\phi|+|\J||\phi|)+|\G||\phi|.
\end{align*}
For $Z=\Om_{0i}$, we  can write that
\[
\Om_{0i}=t\pa_i+x_i\pa_t=\om_i(t\pa_r+r\pa_t)+t(\pa_i-\om_i\pa_r)=\om_i(vL-u\Lb)+t(\pa_i-\om_i\pa_r).
\]
Therefore we can
 bound that
\begin{align*}
|Q(\G, \phi, \Om_{0j})|\les & r(|\a||D\phi|+(|\rho|+|\ab|)|D_L\phi|+|\si||\D\phi|)+u_+(|\rho||D_{\Lb}\phi|+|\ab||\D\phi|)\\&+(r|\J|
+r|J_L|+u_+|J_{\Lb}|+|\G|)|\phi|.
\end{align*}
Since $\zeta(\Om)=0$, we conclude that the estimate \eqref{eq:Est4commu:1} hold for all $Z\in\Gamma$.
\end{proof}
Our next lemma shows that the quadratic  part of the cubic terms $F_{Y\mu}F^{\mu}_{\;X}\phi$ also posses a  necessary null structure. The following result will be used in Section \ref{Sec1.15}.
\begin{lem}
\label{lem:Est4commu:2}
For all $X, \, Y\in \Ga$ and $r\geq R$, we have the following estimate
\begin{equation}
\label{eq:Est4commu:2}
\begin{split}
|F_{Y\mu}F^{\mu}_{\;X}|\les  &u_+^{\zeta(XY)+2}|\ab|^2+u_+^{\zeta(XY)} r^{2}(|\si|^2+|\a|^2+|\rho|^2+|\a||\ab|).
\end{split}
\end{equation}
\end{lem}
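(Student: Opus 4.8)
The plan is to expand both Killing fields $X$ and $Y$ in the null frame $\{L,\Lb,e_1,e_2\}$, substitute the null decomposition \eqref{12.9.2.17} of $F$, and keep track of which component of $F$ is paired with which frame coefficient of $X,Y$. Writing $Z=Z^L L+Z^{\Lb}\Lb+Z^B e_B$ for $Z\in\Ga$, the explicit formulas for $\pa_\mu$, $\Om_{ij}$ and $\Om_{0i}$ already used in the proof of Lemma~\ref{lem:Est4commu:1}, together with $v\les r$, $t\les r$ and $|u|\les u_+$ in the exterior region, give
\[
|Z^L|\les r^{\zeta(Z)+1},\qquad |Z^{\Lb}|\les u_+^{\zeta(Z)+1},\qquad |Z^B|\les r^{\zeta(Z)+1}.
\]
I would also record that $1\le u_+\les r$ here (since $r\ge R\ge 1$ and $u_+=1+\tfrac12|t-r|\les r$ in the exterior region), so that, because $\zeta(X),\zeta(Y)\le 0$, any monomial $u_+^{a}r^{b}$ with $a\ge \zeta(X)+\zeta(Y)$ and $a+b\le \zeta(X)+\zeta(Y)+2$ is $\les u_+^{\zeta(XY)}r^2$. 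This elementary monotonicity is the only arithmetic on the weights that will be needed.

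Next I would compute $F_{Y\mu}F^{\mu}_{\;X}$ by contracting in the null frame, using $V_\mu W^\mu=-\tfrac12(V_LW_{\Lb}+V_{\Lb}W_L)+V_BW_B$ and $F_{\Lb L}=2\rho$, $F_{Le_B}=\a_B$, $F_{\Lb e_B}=\ab_B$, $F_{e_1e_2}=\si$. This writes $F_{Y\mu}F^{\mu}_{\;X}$ as a finite sum of monomials of the form (coefficient of $Y$)$\cdot$(coefficient of $X$)$\cdot$(product of two null components of $F$). Inspecting the expansion gives the one structural fact that makes the lemma work: among the products $\rho^2,|\a|^2,\si^2,\rho\a,\si\a,\a\ab,\rho\ab,\si\ab,|\ab|^2$ that occur, the product $|\ab|^2$ appears in exactly one monomial, namely the $F_{Ye_B}F^{B}_{\;X}$ contribution $Y^{\Lb}X^{\Lb}|\ab|^2$; every monomial containing a single factor of $\ab$ (the $\rho\ab$, $\si\ab$, $\a\ab$ terms) carries a coefficient of the form $u_+^{\zeta(Y)+1}r^{\zeta(X)+1}$ or $r^{\zeta(Y)+1}u_+^{\zeta(X)+1}$, i.e.\ with precisely one $\Lb$-coefficient; and the $\ab$-free monomials carry coefficients of total weight $\zeta(XY)+2$ with at most one $\Lb$-factor, hence $\les u_+^{\zeta(XY)}r^{2}$ by the monotonicity above.

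The bound \eqref{eq:Est4commu:2} then follows term by term. The distinguished term gives $|Y^{\Lb}X^{\Lb}|\,|\ab|^2\les u_+^{\zeta(Y)+1}u_+^{\zeta(X)+1}|\ab|^2=u_+^{\zeta(XY)+2}|\ab|^2$, the first term on the right of \eqref{eq:Est4commu:2}. The monomials with product $\rho^2$, $|\a|^2$, $\si^2$, $\rho\a$, $\si\a$ or $\a\ab$ are handled by ordinary Cauchy--Schwarz (keeping $|\a||\ab|$ as it stands) together with the weight monotonicity, contributing $\les u_+^{\zeta(XY)}r^2(|\si|^2+|\a|^2+|\rho|^2+|\a||\ab|)$, the second term in \eqref{eq:Est4commu:2}. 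The only terms needing care are $\rho\ab$ and $\si\ab$: for a monomial such as $u_+^{\zeta(Y)+1}r^{\zeta(X)+1}|\rho||\ab|$ I would apply the weighted inequality $|\rho||\ab|\le \tfrac12\bigl(\la\,\rho^2+\la^{-1}|\ab|^2\bigr)$ with $\la=u_+^{-\zeta(X)-1}r^{\zeta(X)+1}$, so that the $\rho^2$-part is charged weight $\les u_+^{\zeta(XY)}r^2$ and the $|\ab|^2$-part weight $\les u_+^{\zeta(XY)+2}$; the two inequalities required for this choice of $\la$ both reduce to $u_+^{-2\zeta(X)}\les r^{-2\zeta(X)}$, which holds since $\zeta(X)\le 0$ and $u_+\les r$ (symmetrically for the mirror monomials and for $\si\ab$ in place of $\rho\ab$). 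Summing the finitely many monomials yields \eqref{eq:Est4commu:2}.

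I expect the only real point of care — rather than a genuine obstacle — to be this bookkeeping: one must verify from the explicit null-frame expansion that $|\ab|^2$ is never multiplied by a coefficient combination other than $Y^{\Lb}X^{\Lb}$, so that it is never charged the larger weight $u_+^{\zeta(XY)}r^2$, and that in every cross term containing $\ab$ the companion component is $\rho$, $\si$ or $\a$ (never a second $\ab$), so that the larger weight can always be shifted onto the companion via the (possibly weighted) Cauchy--Schwarz inequality. Once the expansion is written out these facts are immediate, and everything else is the elementary weight comparison recorded above.
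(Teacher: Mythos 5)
Your proof is correct, and the underlying technique is the same as the paper's: decompose $X,Y$ in the null frame, track which of $r$ and $u_+$ weights each frame coefficient, and close the $\rho\ab$ and $\si\ab$ cross terms with a weighted Cauchy--Schwarz. The difference is purely organizational: the paper splits into three cases according to whether $X$ and $Y$ are translations or Lorentz generators and computes each by hand, whereas you package the case distinction once into the uniform bounds $|Z^L|\les r^{\zeta(Z)+1}$, $|Z^{\Lb}|\les u_+^{\zeta(Z)+1}$, $|Z^B|\les r^{\zeta(Z)+1}$, together with the elementary monotonicity $u_+^a r^b\les u_+^{\zeta(XY)}r^2$ (valid since $\zeta(Z)\le 0$ and $1\le u_+\les r$). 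Your version is arguably cleaner, but it is the same proof.

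One small bookkeeping slip, which does not affect the conclusion: not every single-$\ab$ monomial carries "precisely one $\Lb$-coefficient." The expansion of $-\tfrac12\bigl(F_{YL}F_{\Lb X}+F_{Y\Lb}F_{LX}\bigr)$ produces $-Y^C X^D\a_C\ab_D$ and $-Y^C X^D\ab_C\a_D$, whose coefficients $Y^C X^D$ are both angular (no $\Lb$-factor at all). This is harmless: a coefficient with zero $\Lb$-factors is $\les r^{\zeta(XY)+2}\les u_+^{\zeta(XY)}r^2$, and since the companion component is $\a$, these monomials fall directly into the $u_+^{\zeta(XY)}r^2|\a||\ab|$ term without any weighted Cauchy--Schwarz. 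The two claims that actually carry the argument --- that $|\ab|^2$ appears only with coefficient $Y^{\Lb}X^{\Lb}$, and that every $\ab$ cross term pairs $\ab$ with $\rho$, $\si$, or $\a$ (never a second $\ab$) --- are correct.
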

\begin{proof}
If both $X, \, Y\in\{\pa_\mu\}$, we can simply bound that
\[
|F^{\mu}_{\ X}F_{Y\mu}|\les |F|^2.
\]
For this case $\zeta(XY)=-2$, in particular estimate \eqref{eq:Est4commu:2} holds.

If both $X, \, Y \in \{\Om_{\mu\nu}\}$, we can write the Lorentz boost as in the previous lemma
\[
\Om_{0j}=\om_j(vL-u\Lb)+t(\pa_j-\om_j\pa_r).
\]
Then we can bound that
\begin{align*}
|F^{\mu}_{\ X}F_{Y\mu}|&\les r(|\rho|+|\ab|)(u_+|\rho|+r|\a|)+(u_+|\ab|+r|\a|+r|\si|)^2\\
&\les u_+^2|\ab|^2+r^2(|\a|^2+|\si|^2+|\rho|^2+|\a||\ab|). 
\end{align*}
Since $\zeta(XY)=0$ in this case, we conclude that estimate \eqref{eq:Est4commu:2} is proved.

If, without loss of generality, $X=\pa_\mu$, $Y=\Om_{\nu\ga}$, we then can show that
\begin{align*}
|F^{\mu}_{\ X}F_{Y\mu}|&\les r|\a||F|+r|\rho|(|\rho|+|\ab|)+r|\si|(|\ab|+|\si|)+u_+|\ab|(|\a|+|\ab|+|\si|).
\end{align*}
Considering that $\zeta(XY)=-1$ for this situation, by using Cauchy-Schwarz's inequality, we have shown that estimate \eqref{eq:Est4commu:2} holds for all $X$, $Y\in \Gamma$.
\end{proof}

\section{Proof of main theorem}
In this section, we prove Theorem \ref{m.thm} by a bootstrap argument.

\subsection{Bootstrap argument}
For some small positive constant $\dn$, verifying  $1>\dn\ge \E_{2,\ga_0}$, to be determined later,
we make a set of  bootstrap assumptions on the the solution $(F, \phi)$ to the massive MKG equations in the exterior region.  The bootstrap assumptions mainly consist of the higher order energy flux decay as well as the $r$-weighted energy flux decay of the solution. For the scalar field, the highest order $r$-weighted energy estimates can at most have weights $r$ while for the chargeless part of the Maxwell field, the weights are chosen to be $r^{\ga_0}$ for some $\ga_0>1$.

 Let $v_*>\frac{R}{2}$ be a fixed constant.  We suppose  the following estimates hold
\begin{align}
\label{eq:BT:energy}
&E[D_Z^k\phi, \cL_Z^k\tilde{F}](\H_{u_1}^{-u_2})+E[D_Z^k\phi, \cL_Z^k\tilde{F}](\Hb_{-u_2}^{u_1})\leq 2 (u_1)_+^{-\ga_0+2\zeta(Z^k)}\dn,\\
\label{eq:BT:PWE:scal}
&\int_{\H_{u_1}^{-u_2}}r|D_L D_Z^k\phi|^2 +\int_{\Hb_{-u_2}^{u_1} }r(|\D D_Z^k\phi|^2+|D_Z^k\phi|^2)  \leq 2(u_1)_+^{1-\ga_0+2\zeta(Z^k)}\dn,\\
\label{eq:BT:PWE:Max}
&\int_{\H_{u_1}^{-u_2}}r^{\ga_0}|\a[\cL_Z^k \tilde{F}]|^2 +\iint_{\mathcal{D}_{u_1}^{-u_2}}r^{\ga_0-1}|(\a, \rho,\sigma)[\cL_Z^k\tilde{F}]|^2\\
\notag
&\qquad \qquad+\int_{\Hb_{-u_2}^{u_1} }r^{\ga_0}(|\rho[\cL_Z^k\tilde{F}]|^2+|\si[\cL_Z^k\tilde{F}]|^2) \leq 2(u_1)_+^{2\zeta(Z^k)}\dn
\end{align}
for all $-v_*\le u_2<u_1\leq -\frac{R}{2}$, $Z^k\in \Ga^k$, $k\leq 2$.

Here we remark that $k=0$, due to $\zeta(Z^0)=0$, (\ref{eq:BT:energy}) and (\ref{eq:BT:PWE:scal}) can be improved in view of Proposition \ref{prop:Energyflux:decay:0}. The improvement of all other estimates relies on the bootstrap assumptions. At the end of this section, we will be able to show the inequalities (\ref{eq:BT:energy})-(\ref{eq:BT:PWE:Max}) hold with $2\dn$ replaced by $C(\E_{2,\ga_0}+\dn^2)$. We then will choose $\dn$ to achieve the improvements of the bootstrap assumptions. We note that the improved estimates are independent of the choice of $v_*$, which allows us to achieve Theorem \ref{m.thm} (2) for any $v_*> R/2$ by the principle of  continuation.

The strategy of the proof is that with these assumptions, we first prove the pointwise decay estimates for the scalar field $\phi$ as well as the chargeless part of the Maxwell field $\tilde{F}$. We then obtain energy estimates and the $r$-weighted energy estimates, which improves the above bootstrap assumptions due to the smallness of the initial data. Combined with the standard local existence result, we conclude that the solution exists globally in time in the exterior region. As a consequence, we also obtain the decay estimates for the solutions.

We will rely on Lemma \ref{lem:globalSobolev} and a similar Sobolev inequality to derive the pointwise decay. For this purpose, we need to bound the initial value of the solution $(F, \phi)$ and its derivatives in terms of the given data, which is treated below.
\begin{lem}
Let $2\le p\le 4$ and $u\le -\f12R$. For the admissible data  $(F,\phi)$ of (\ref{EQMKG}) we have the following.
\begin{itemize}
\item[(1)] For all $Z^l\in \Ga^l$ and $l\le 2$,
\begin{equation}\label{12.22.1.17}
\int_{{\mathbb S}^2}|D_Z^l \phi|^p(u, -u, \omega) d\omega\les u_+^{-2+\frac{p}{2}(-\ga_0+2\zeta(Z^l))}\E_{l,\ga_0}^\frac{p}{2}.
\end{equation}
In particular, for all $Z^l\in \Ga^l $ and $l\le 1$,  
\begin{equation}\label{1.31.2.18}
\int_{{\mathbb S}^2}|D_Z^l \phi|^p(u, -u, \omega) d\omega\les u_+^{{2-2p}+\frac{p}{2}(-\ga_0+2\zeta(Z^l))}\E_{l+1,\ga_0}^\frac{p}{2}.
\end{equation}
\item[(2)]  For all  $Z=\Omega_{ij}$ and $l\le 1$,
\begin{equation}
\int_{{\mathbb S}^2}|\Lie_Z^l \tilde F|^p(u,-u, \omega)d\omega\les u_+^{2-2p-\frac{p}{2}\ga_0} \E_{l+1,\ga_0}^\frac{p}{2}.\label{12.26.2.17}
\end{equation}

\end{itemize}
\end{lem}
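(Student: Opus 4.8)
The statement is a trace-type estimate on the sphere $S(u,-u)$ which sits at the intersection of the initial hypersurface $\Sigma_0 = \mathbb{R}^3 \cap \{r \geq R\}$ with the null cone $\mathcal{H}_u$. Since $t=0$ on $\Sigma_0$, we have $v = -u = r/2$ there, so the sphere $S(u,-u)$ is simply the coordinate sphere $\{|x| = -2u\} = \{r = -2u\}$ in $\mathbb{R}^3$, on which $u_+ = 1 + |u| \sim r$. The plan is to reduce everything to the weighted Sobolev-trace inequality on $\mathbb{R}^3$ controlling the $L^p(S_r)$ norm of a function by its weighted Sobolev norm on the exterior annulus, and then express the covariant derivatives $D_Z^l\phi$ and Lie derivatives $\mathcal{L}_Z^l\tilde F$ restricted to $t=0$ in terms of the data $(\phi_0,\phi_1,\tilde E, H)$ appearing in $\mathcal{E}_{k,\ga_0}$.

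\textbf{Step 1: trace inequality on the initial slice.} First I would record the one-dimensional (radial) trace estimate: for a scalar $g$ on $\{r\geq R\}\subset\mathbb{R}^3$ and any $r_0\geq R$, writing $S_{r_0} = \{|x|=r_0\}$, one has $\int_{S_{r_0}} |g|^p\,d\omega \lesssim \int_{\{r\geq r_0\}}\big(|g|^{p-1}|\bar\nabla g| + r^{-1}|g|^p\big)\,dx$ by the fundamental theorem of calculus in $r$ (push the sphere out to infinity), followed by Hölder in the radial integral. Combined with the standard $L^4(S^2)\hookrightarrow L^\infty$ and $H^1(S^2)$ Sobolev embeddings on the sphere to absorb angular derivatives — exactly as in the proof of Lemma~\ref{lem:globalSobolev} — this yields, schematically, for $2\leq p\leq 4$,
\[
\int_{S_{r_0}}|g|^p\,d\omega \;\lesssim\; \sum_{m\leq 1}\int_{\{r\geq r_0\}} r^{?}\,|\bar\nabla^m g|^{?}\cdots,
\]
and the point is to track the powers of $r$ so that the right-hand side is bounded by $r_0^{\;?}\,\mathcal{E}_{l,\ga_0}^{p/2}$ with the exponent claimed. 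With the gauge-covariant derivative $\bar D$ replacing $\bar\nabla$ this works identically (one only uses $|\bar\nabla |g|^2| \leq 2|g||\bar D g|$ and that the curvature terms from commuting $\bar D$ are lower order), and for the $S$-tangent tensor $\mathcal{L}_Z^l\tilde F$ one uses the tensorial version as in part (3) of Lemma~\ref{lem:globalSobolev}.

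\textbf{Step 2: express the restricted derivatives via the data.} On $\{t=0\}$ the Killing fields $Z\in\Gamma$ split into spatial rotations $\Omega_{ij}$ (which are tangent to $\Sigma_0$), the boosts $\Omega_{0i} = t\partial_i + x_i\partial_t$ (which at $t=0$ reduce to $x_i\partial_t$), the spatial translations $\partial_i$, and $\partial_t$. Using the equations \eqref{EQMKG} — namely $D_0\phi = \phi_1$, $D_0 D_0\phi = \Delta_A\phi - \phi - (\text{first order})$ from $\Box_A\phi = \phi$, and $\partial_t \tilde F_{0i}$, $\partial_t \tilde F_{ij}$ reconstructed from the Maxwell system and Bianchi — one rewrites every $\partial_t$ falling on $\phi$ or $\tilde F$ in terms of data and their spatial ($\bar D$, $\bar\nabla$) derivatives. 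This converts $D_Z^l\phi\big|_{t=0}$ and $\mathcal{L}_Z^l\tilde F\big|_{t=0}$ into expressions involving $\bar D^{\leq l+1}\phi_0$, $\bar D^{\leq l}\phi_1$, $\bar\nabla^{\leq l}\tilde E$, $\bar\nabla^{\leq l}H$; each boost contributes a factor $x_i$, i.e. a factor $\sim r$, which is precisely the source of the $\zeta(Z^l)$-dependence (each $\Omega_{0i}$ has $\zeta=0$ but eats a translation's worth of decay since $|x|\sim r$). Plugging these into the weighted norms of Step~1 and matching against the definition \eqref{IDMKG} of $\mathcal{E}_{l,\ga_0}$, the weights $(1+r)^{\ga_0+2m}$ there are exactly what is needed, on the sphere $r_0 = -2u$, to produce the powers $u_+^{-2 + \frac p2(-\ga_0+2\zeta(Z^l))}$ in \eqref{12.22.1.17} and $u_+^{2-2p+\frac p2(-\ga_0+2\zeta(Z^l))}$ in \eqref{1.31.2.18} — the difference between the two being whether one has spent an extra unit of regularity (hence $\mathcal{E}_{l+1}$ vs.\ $\mathcal{E}_l$), which buys two extra powers of $r$ in the trace and thus the shift from $-2$ to $2-2p$ in the exponent. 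Part (2) for $\tilde F$ with $Z=\Omega_{ij}$ is the same computation with no boosts, so no $\zeta$ term appears.

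\textbf{Main obstacle.} The routine part is the trace inequality; the delicate bookkeeping — and the step I expect to require the most care — is Step~2, specifically verifying that after using the equations to eliminate all time derivatives, the resulting combination of data derivatives carries exactly the weight $(1+r)^{\ga_0+2m}$ that $\mathcal{E}_{l,\ga_0}$ supplies, with no loss. One must check that the second-order equation $\Box_A\phi=\phi$ trades one $\partial_t$ for one spatial derivative \emph{plus a zeroth-order term of the same weight}, so that the naive count "each $\partial_t$ costs the same as each $\partial_i$" is honest; and similarly that the Maxwell evolution equations do not force a worse $r$-weight than the constraint-satisfying data $(\tilde E, H)$ already enjoy. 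The separation of the charge — i.e. working with $\tilde F$ rather than $F$ — is what makes the Maxwell weights consistent, since $\rho[\tilde F]$ decays faster than $\rho[F]\sim q_0 r^{-2}$; this is why the lemma is stated for $\tilde F$.
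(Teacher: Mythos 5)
Your proposal is essentially the same as the paper's argument. The paper proves the lemma by first establishing the limiting fact that $\liminf_{u_+\to\infty}\int_{\mathbb S^2}|D_Z^l\phi|^p\,d\omega=0$, then integrating $\partial_r$ from spatial infinity (which is precisely your fundamental-theorem-of-calculus step), applying Cauchy--Schwarz/Hölder in the radial variable, and combining with the Poincaré-type inequality on the sphere from Lemma~\ref{lem:globalSobolev}. Concretely the paper carries out explicit $L^2(\mathbb S^2)$ and $L^4(\mathbb S^2)$ trace estimates and then interpolates to reach $L^p$ for $2\le p\le 4$, whereas you describe a single schematic $L^p$ trace inequality absorbed by a Sobolev embedding on the sphere; these are the same idea in slightly different packaging, and the exponent bookkeeping you flag as the sensitive point is exactly what the $L^2/L^4$+interpolation route keeps clean. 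The one place your write-up is genuinely more explicit than the paper's is your Step~2: the paper simply writes $\|\bar D^k D_Z^l\phi\|^2_{L^2(\Sigma_0^u)}\lesssim\E_{l,\ga_0}$ without spelling out that any $\partial_t$ or boost $\Omega_{0i}=x_i\partial_t$ occurring in $Z^l$ must be traded for data and spatial derivatives using $D_0\phi|_{t=0}=\phi_1$, the wave equation $D_0^2\phi=\Delta_A\phi-\phi$, and the Maxwell/Bianchi evolution for $\tilde F$; you correctly identify this as the step requiring the most care, and your observation that boosts carry a factor $\sim r$ accounting for the $\zeta(Z^l)$-dependence matches the paper's signature convention. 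So the approach is the same, with you having made explicit a step the paper leaves implicit.
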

\begin{proof}
Let us consider (\ref{12.22.1.17}) first.
  Note that we have the embedding
\begin{align}\label{12.21.3.17}
\left(\int_{\Sigma_0^u}|D_Z^l \phi|^6 dx \right)^{\frac{1}{3}}\les \int_{\Sigma_0^u} (|D_{\pa r}D_Z^l\phi|^2+|\D D_Z^l\phi|^2+|D_Z^l\phi|^2) dx \les \E_{l,\ga_0}.
\end{align}
 Since $\E_{l,\ga_0}$  is finite,  there holds
\begin{align*}
\liminf\limits_{u_+\rightarrow \infty}\int_{{\mathbb S}^2}|D_Z^l\phi|^6(u, -u, \om)d\om =0, \forall\, l\le 2.
\end{align*}
By interpolation, for all $l\leq 2$ and $2\leq p\leq 6$, we can obtain
\begin{align*}
\liminf\limits_{u_+\rightarrow \infty}\int_{{\mathbb S}^2}|D_Z^l \phi|^p(u, -u, \om) d\om =0.
\end{align*}
Note that for $l\le 2$, by integrating from the spatial infinity,  we have
\begin{equation}\label{1.31.1.18}
\int_{{\mathbb S}^2}|D_Z^l \phi|^2(u, -u, \omega) d\omega \les\int_{-2u}^\infty\int_{{\mathbb S}^2} |D_{\pa r} D_Z^l \phi| |D_Z^l \phi| dr d\omega\les u_+^{-\ga_0-2+2\zeta(Z^l)} \E_{l,\ga_0}
\end{equation}
and if $l \le 1$,  we can derive
\begin{equation}\label{1.31.3.18}
\int_{{\mathbb S}^2}|D_Z^l \phi|^2(u, -u, \omega) d\omega \les u_+^{-3}\|r D_{\pa r} D_Z^l \phi\|_{L^2(\Sigma_0^u)}\|D_Z^l \phi\|_{L^2(\Sigma_0^u)}\les u_+^{-\ga_0-3+2\zeta(Z^l)}\E_{l+1, \ga_0}.
\end{equation}
Next, we consider the $L^4$ estimates. Similar to the proof of (\ref{12.25.1.17}) in  Lemma \ref{lem:globalSobolev},  we can obtain for a complex scalar field $f$ that
\begin{equation*}
\int_{S(u, -u)}| f|^4 r^{-2}
\les \lim_{u_+\rightarrow\infty} \int_{S(u, -u)}| f|^4 r^{-2} + \int_{\Sigma_u^0}|D_{\pa r} f|^2 r^{-2} dx\cdot \int_{\Sigma_0^u}(| f|^2+r^2|\D  f|^2)  r^{-2} dx.
\end{equation*}
Applying the above inequality to $f=D_Z^l \phi, $ with $l\le 2$,  gives 
\begin{equation*}
\int_{{\mathbb S}^2}| D_Z^l \phi|^4(u, -u, \omega)\les  u_+^{-2}\sum_{k\le 1}\|\bar D^k D_Z^l \phi\|_{L^2(\Sigma_u^0)}^4\les u_+^{-2-2\ga_0+4\zeta(Z^l)} \E^2_{l,\ga_0}
\end{equation*}
where  $\bar D$ is the projection of $D$ to ${\mathbb R}^3\times \{t=0\}$.

When $l\le 1$,  we have the improved estimate
\begin{align*}
\int_{{\mathbb S}^2}|D_Z^l \phi|^4(u, -u, \omega)\les u_+^{-6}\|r D_{\pa r} D_Z^l f\|^2_{L^2(\Sigma_0^u)}&\sum_{k\le 1, 1\le i<j\le 3}\|D^k_{\Omega_{ij}}D_Z^l \phi\|^2_{L^2(\Sigma_0^u)}\\
&\les u_+^{-6-2\ga_0+4\zeta(Z^l)} \E^2_{l+1, \ga_0}.
\end{align*}
where we used the fact that $r|\D f|\les \sum_{1\le i<j\le 3}|D_{\Omega_{ij}} f|$ for any complex scalar field.  
By interpolating  the above two estimates with (\ref{1.31.1.18}) and (\ref{1.31.3.18}) respectively, 
the estimates in (1) of this lemma can be proved.

To see (\ref{12.26.2.17}), we first note that it is direct to check $|\Lie_Z^l \tilde F|\approx |\Lie_Z^l {\tilde E}_i|+|\Lie_Z^l {\tilde H}_i|$. Then it suffices to show (\ref{12.26.2.17}) holds for $\Lie_Z^l \tilde E_i$ and $\Lie_Z^l \tilde H_i$ with $Z=\Omega_{ij}$ and $l\le 1$. We then can repeat the proof for (\ref{1.31.2.18}) to obtain the estimate in (\ref{12.26.2.17}) to $\Lie_Z^l \tilde E$ and $\Lie_Z^l \tilde H$ separately. Thus we can complete the proof of (\ref{12.26.2.17}).

\end{proof}
\subsection{Decay estimates for the scalar field}
 We first derive the decay estimates for the scalar field with the help of the bootstrap assumptions. 
\begin{prop}
\label{prop:decay:scal}
Under the  bootstrap assumptions \eqref{eq:BT:energy} and \eqref{eq:BT:PWE:scal}, in $\{r\ge t+R, v\le v_*\}$, we have the decay estimates for the scalar fields
\begin{align}
\label{eq:decay:scal:pt}
r^2|\D\phi|^2+u_+^2|D_{\Lb}\phi|^2+r^2|D_L \phi|^2 +u_+^{-2\zeta(Z)}|D_Z\phi|^2&\les \dn r^{-\frac{5}{2}+\ep}u_+^{\f12-\ga_0},\\
\label{eq:decay:scal:pt:phi}
|\phi|^2 &\les \dn r^{-3}u_+^{-\ga_0},
\end{align}
where $Z\in \Ga$.
\end{prop}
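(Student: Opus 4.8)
The plan is to extract the pointwise estimates from the bootstrap fluxes \eqref{eq:BT:energy}--\eqref{eq:BT:PWE:scal}, the initial-data bounds of the preceding lemma, and the global Sobolev inequality of Lemma \ref{lem:globalSobolev} (together with its analogue along the incoming cones $\Hb_v$).

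\emph{Pointwise bound for $\phi$.} I would apply Lemma \ref{lem:globalSobolev}(2) to $f=\phi$ on $\H_u$, choosing $\ga,\ga_0',\ga_2$ with $2\ga=\ga_0'+2\ga_2$ so that the factor $\sum_{k\le 2}\int_{\H_u}r^{2\ga_2-2}|D_{\Om_{ij}}^k\phi|^2$ is absorbed by the energy flux \eqref{eq:BT:energy} (take $2\ga_2\le 2$), while the factor $\sum_{l\le 1}\int_{\H_u}r^{\ga_0'-2}|L\Om_{ij}^l(r^\ga\phi)|^2$ --- after writing $D_L(r^\ga\phi)=\ga r^{\ga-1}\phi+r^\ga D_L\phi$ with $Lr=1$ --- is absorbed by the plain flux and the weight-$r$ flux $\int_{\H_u}r|D_L D_{\Om_{ij}}^l\phi|^2$ of \eqref{eq:BT:PWE:scal}; this forces $2\ga+\ga_0'\le 3$. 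The boundary term $\sum_{l\le 1}\int_{S(u,-u)}|r^\ga\Om_{ij}^l\phi|^4 r^{-2}$ is estimated by \eqref{1.31.2.18}, with $r\approx u_+$ on $S(u,-u)$. A single such estimate yields $\phi$ with $r$-decay weaker than $r^{-3}$; I would then upgrade it by integrating $|L(|\phi|)|\le|D_L\phi|$ along $\H_u$ from $\Sigma_0$ (or $|\Lb(|\phi|)|\le|D_{\Lb}\phi|$ along $\Hb_v$) and inserting the sharp bounds for $D_L\phi$, $D_{\Lb}\phi$ obtained below, recovering $|\phi|^2\les\dn r^{-3}u_+^{-\ga_0}$.

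\emph{First-order quantities.} Likewise I would apply the scalar and $S$-tangent-tensor global Sobolev inequalities to $D_{\Om_{ij}}\phi$, $D_{\Lb}\phi$ and, through $r|\D\phi|\les\sum_{i<j}|D_{\Om_{ij}}\phi|$, to $\D\phi$, feeding in the top-order ($k=2$) fluxes of \eqref{eq:BT:energy}--\eqref{eq:BT:PWE:scal} and the data bounds \eqref{12.22.1.17}, \eqref{12.26.2.17}; as for $\phi$, this is likely to first give weaker $r$-decay, to be sharpened by transport along the cones using the commutator $[D_L,D_{\Om_{ij}}]\phi=iF_{L\Om_{ij}}\phi$ and the curvature decay of \eqref{eq:BT:PWE:Max}. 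Once $\D\phi$, $D_{\Lb}\phi$ and $\phi$ are controlled, the borderline $D_L\phi$ is read off the algebraic identity $v|Lf|\les\sum_i|\Om_{0i}f|+\sum_{i<j}|\Om_{ij}f|+|u||\pa f|$ with $f=\phi$: since $v\approx r$ in the exterior and $|u|\le u_+$, $|D_L\phi|\les r^{-1}\big(\sum_{Z\in\Gamma}|D_Z\phi|+u_+|D_{\pa}\phi|\big)$, whose right-hand side is already estimated; the combinations $D_{\Om_{0i}}\phi$ and $D_{\pa}\phi$ are then reassembled from $D_L\phi$, $D_{\Lb}\phi$, $\D\phi$ with the appropriate weights, giving the signature-weighted form of \eqref{eq:decay:scal:pt}.

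\emph{Main obstacle.} The delicate point is tracking the $r$- and $u_+$-powers at once: the weight-$r$ flux sits on $D_L\phi$ on $\H_u$ but on $\D\phi$ and $\phi$ on $\Hb_v$, so the Sobolev parameters must be tuned separately for the two foliations, and no single Sobolev estimate produces the sharp $r$-power for $\phi$, making the transport-along-the-cones step unavoidable. Relatedly, the global Sobolev inequality consumes two angular derivatives, so the pointwise control of the first-order quantities is pressed against the order-two bootstrap; this is the source of the small loss $\ep$ and the reason $D_L\phi$ is recovered indirectly via the algebraic identity rather than by a direct Sobolev estimate.
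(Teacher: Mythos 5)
Your outline for $\D\phi$, $D_{\Lb}\phi$ and the reconstruction of $D_L\phi$ from the pointwise identity $v|Lf|\les\sum|\Omega_{0i}f|+\sum|\Omega_{ij}f|+|u||\pa f|$ is broadly in line with the paper, and you correctly diagnose that a direct application of Lemma~\ref{lem:globalSobolev} to $f=\phi$, constrained by the bootstrap fluxes $\int_{\H_u}|D_\Om^k\phi|^2$ and $\int_{\H_u}r|D_L D_\Om^l\phi|^2$, tops out at $\ga=5/4$ (i.e.\ $|\phi|^2\les\dn\,r^{-5/2}u_+^{1/2-\ga_0}$), short of the claimed $r^{-3}u_+^{-\ga_0}$. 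The gap is in your proposed repair. Integrating $|L(|\phi|)|\le|D_L\phi|$ along $\H_u$ from $\Sigma_0$ cannot manufacture $r$-decay: since $|D_L\phi|$ is integrable in $r$ along $\H_u$, the integral $\int_{-u}^{v}|D_L\phi|\,dv'$ is dominated by its lower endpoint $r\approx u_+$ and the resulting bound for $|\phi(u,v)|$ is a function of $u_+$ alone (one gets $|\phi|\les\dn^{1/2}u_+^{-1-\ga_0/2+\ep/2}$), which for $r\gg u_+$ is strictly weaker than $\dn^{1/2}r^{-3/2}u_+^{-\ga_0/2}$. The same obstruction appears if you transport along $\Hb_v$ or transport a weighted quantity $r^a|\phi|^2$: either the $r$-weight is lost at the endpoint or one needs flux bounds ($\int_{\Hb_v}r^2|\phi|^2$ etc.) that the bootstrap does not provide. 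The paper's actual resolution is the one ingredient your plan omits: the same algebraic identity you use pointwise for $D_L\phi$ is first applied \emph{at the flux level}, giving
\[
\int_{\H_u^{-u_2}}r^2|D_L D_{\Om_{ij}}^{l}\phi|^2\les\int_{\H_u^{-u_2}}\Big(|D_{\Om_{0j}}D_{\Om_{ij}}^l\phi|^2+|D_{\Om_{ij}}D_{\Om_{ij}}^l\phi|^2+u_+^2|DD_{\Om_{ij}}^l\phi|^2\Big)\les\dn\,u_+^{-\ga_0},\quad l\le1,
\]
purely from the energy fluxes \eqref{eq:BT:energy}. This upgrades the available $D_L$-flux from $\int r|D_L\phi|^2\les\dn u_+^{1-\ga_0}$ to $\int r^2|D_L\phi|^2\les\dn u_+^{-\ga_0}$ — both one more $r$ and one less $u_+$ — which then makes the choice $\ga=3/2$, $\ga_0'=\ga_2=1$ in Lemma~\ref{lem:globalSobolev} admissible and yields $|\phi|^2\les\dn r^{-3}u_+^{-\ga_0}$ directly, with no subsequent transport.

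A secondary mismatch: applying Lemma~\ref{lem:globalSobolev} directly to $f=D_Z\phi$ requires $\int_{\H_u}|D_{\Om}^{k}D_Z\phi|^2$ with $k\le2$, i.e.\ three commuting derivatives, outside the order-two bootstrap. The paper instead transports the weighted quantity $r^{5/2}|D_Z^l\phi|^p$ along $\H_u$ for $p\in\{2,4\}$ (the $p=4$ case via the embedding $\dot H^1(\H_u)\hookrightarrow L^6$), interpolates in $p$, and applies $W^{1,2+\ep}(\mathbb{S}^2)\hookrightarrow L^\infty$; this keeps the total count at two $Z$-derivatives and is the source of the $\ep$-loss in \eqref{eq:decay:scal:pt}. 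You hint at the derivative pressure at the end, but the interpolation-in-$p$ device that resolves it should be made explicit.
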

\begin{proof}
Let's first consider the decay estimates for $D_Z\phi$ for all $Z\in \Ga$. We will make a slight modification on Lemma \ref{lem:globalSobolev}.
We can show that, by integrating from the initial slice,
\begin{align*}
|\int_{{\mathbb S}^2} &r^{\frac{5}{2}}|D_Z^l\phi|^p(u, v, \om) d\om -\int_{{\mathbb S}^2} r^{\frac{5}{2}}|D_Z^l \phi|^p(u, -u, \om) d\om|\\
&\les \int_{-u}^{v}\int_{{{\mathbb S}^2}}r^\frac{5}{2}|D_L D_Z^l\phi| |D_Z^l\phi|^{p-1}   dv' d\om+\int_{-u}^v\int_{{\mathbb S}^2} r^\frac{3}{2} |D_Z^l \phi|^p dv' d\om\\
&\les (\|r^\frac{1}{2} D_L D_Z^l \phi\|_{L^2(\H_u^{-u_2})}+\|r^{-\f12} D_Z^l \phi\|_{L^2(\H_u^{-u_2})})\||D_Z^l \phi|^{p-1}\|_{L^2(\H_u^{-u_2})}.
\end{align*}
Here $Z^l\in \Ga^l$ with $l\le 2$.

When $p=2$, we can bound the term of $\||D_Z^l \phi|^{p-1}\|_{L^2(\H_u^{-u_2})}$ by the energy flux through $\H_u$.  While for $p=4$, similar to  (\ref{12.21.3.17}) we can bound the integral by using Sobolev embedding on the outgoing null hypersurface $\H_u$ as follows,
\begin{align*}
(\int_{-u}^{-u_2}\int_{\mathbb{S}^2} r^2 |D_Z^l \phi|^6 dv d\omega )^\frac{1}{3}&\les \int_{\H_u^{-u_2}}\{|D_L D_Z^l \phi|^2+|{\sl D} D_Z^l \phi|^2+|D_Z \phi|^2 \}.
\end{align*}
Also by using (\ref{12.22.1.17}), we can obtain for $l\le 2$,
\begin{align*}
&\int_{{\mathbb S}^2} r^{\frac{5}{2}} |D_Z^l \phi|^2(u,v, \omega)d\omega\les u_+^\f12 u_+^{-\ga_0+2\zeta(Z^l)} \dn, \\
&\int_{{\mathbb S}^2} r^\frac{5}{2} |D_Z^l \phi|^4(u,v, \omega) d\omega \les u_+^\f12(u_+^{-\ga_0+2\zeta(Z^l)} \dn)^2.
\end{align*}
 Then interpolation implies that for $2\le p\le 4$, $l\le 2$,
\begin{align*}
\int_{{\mathbb S}^2}|D_Z^l\phi|^p(u, v, \om) d\om &\les  \|D_Z \phi(u,v, \cdot)\|_{L_\omega^2}^{4-p}\|D_Z\phi(u,v, \cdot)\|_{L_\omega^4}^{2p-4}\\
&\les \dn^{\frac{p}{2}}r^{-\frac{5}{2}}u_+^{\f12+\frac{p}{2}(-\ga_0+2\zeta(Z^l))}.
\end{align*}
Take $Z^l=\Om_{ij} Z$ and $p=2+\ep$. By using Sobolev embedding on the unit sphere we obtain the decay estimates for
$D_Z\phi$,
\begin{equation*}
|D_Z\phi|^2\les \dn r^{-\frac{5}{2}+\ep}u_+^{\f12 +2\zeta(Z)-\ga_0}.
\end{equation*}
Now by taking $Z=\Om_{ij}$, we  can derive the decay estimate for $|\D\phi|$ which could be bounded above by the sum of $r^{-1}|D_{\Om_{ij}}\phi|$. For the other components such as $D_{L}\phi$ and $D_{\Lb}\phi$, first we can take $Z=\pa_\mu$ to conclude that
\[
|D\phi|^2\les \dn r^{-\frac{5}{2}+\ep}u_+^{\f12-\ga_0-2}.
\]
In particular, we have
\[
u_+^2|D_{\Lb}\phi|^2\les \dn r^{-\frac{5}{2}+\ep}u_+^{\f12-\ga_0}.
\]
For $D_L\phi$, we use the Lorentz boost $Z=\Om_{0j}$. Notice that
\[
\Om_{0j}=\om_j (vL-u\Lb)+t(\pa_j-\om_j\pa_r).
\]
In the exterior region $\{t+R\leq r\}$, we have
\[
|D_{t(\pa_j-\om_j\pa_r)}\phi|^2\les \sum\limits_{l, k}|D_{\Om_{lk}}\phi|^2.
\]
Therefore we can show that
\begin{align}
r^2|D_L\phi|^2\les |D_{vL}\phi|^2\les \sum\limits_{j}|D_{\Om_{0j}}\phi|^2+|D_{u\Lb}\phi|^2+\sum\limits_{l, k}|D_{\Om_{lk}}\phi|^2\les \dn r^{-\frac{5}{2}+\ep}u_+^{\f12-\ga_0}.\label{1.1.1.18}
\end{align}
Therefore the decay estimate \eqref{eq:decay:scal:pt} holds.

To show the improved decay estimate \eqref{eq:decay:scal:pt:phi} for $\phi$, we need the associated improved energy flux decay. Recall the bootstrap assumption (\ref{eq:BT:energy}) implies that
\begin{align*}
\int_{\H_u^{-u_2}}|D_Z^l\phi|^2\les E[D_Z^l \phi](\H_u^{-u_2}) \les\dn u_+^{-\ga_0 +2\zeta(Z^l)},\quad \forall\, l\leq 2.
\end{align*}
Therefore similar to (\ref{1.1.1.18}), we can show that
\begin{align*}
\int_{\H_u^{-u_2}}r^2|D_L D_Z^l\phi|^2  &\les \int_{\H_u^{-u_2}}(|D_{\Om_{0j}} D_Z^l\phi|^2+|D_{\Om_{ij}}D_Z^l\phi|^2+u_+^2 |DD_Z^l\phi|^2)\\
&\les \dn u_+^{-\ga_0 +2\zeta(Z^l)},\quad \forall\, l\leq 1.
\end{align*}
Now we substitute the above two estimates  with $Z^l=\Omega_{ij}^l, l\le 2$, $\forall\, 1\le i<j\le 3$  to the following inequality, which is derived by applying Lemma \ref{lem:globalSobolev} to $f=\phi$, $\ga=\frac{3}{2}$, $\ga_0'=\ga_2=1$,
\begin{align*}
\sup\limits_{S{(u, v)}}|r^{\frac{3}{2}} \phi|^4
&\les \sum\limits_{l\leq 1, 1\le i<j\le 3} \int_{S(u, -u)}|r D_{\Om_{ij}}^l \phi|^4 +\sum\limits_{k\leq 2,1\le i<j\le 3}\int_{\H_u^{-u, v_*}} |D_{\Om_{ij}}^k f|^2\\
&\times \sum\limits_{l\leq 1, 1\le i<j\le 3 }\int_{\H_u^{-u, v_*}} r^{-1}|D_L D_ {\Om_{ij}}^l(r^{\frac{3}{2}} \phi)|^2 \les \dn^2 u_+^{-2\ga_0},
\end{align*}
 where the term on the sphere $S(u, -u)$ has been treated by (\ref{1.31.2.18}).  Thus we have completed the proof of \eqref{eq:decay:scal:pt:phi}.
\end{proof}

\subsection{Decay estimates for the Maxwell field}
In this subsection, we derive the pointwise decay estimates for the Maxwell field under the bootstrap assumptions.
\begin{prop}
\label{prop:decay:pt:Max}
In the exterior region $\{t+R\leq r, v\le v_*\}$, under the bootstrap assumptions \eqref{eq:BT:energy} and  \eqref{eq:BT:PWE:Max}, we have
\begin{align}
\label{eq:decay:Max:asirho}
|\tilde{\rho}|^2+|\a|^2+|\si|^2&\les \dn r^{-2-\ga_0}u_+^{-1},\\
\label{eq:decay:Max:ab}
|\ab|^2 &\les \dn r^{-2}u_+^{-\ga_0-1},
\end{align}
where $\tilde\rho=\rho[\tilde F]$ \begin{footnote}
{To distinguish from the component of $F$, we add the tilde to the Greek letter when it is the component of $\tilde F$.
 It is clear that   $\a=\tilde \ab$, $\tilde{\ab}=\ab$, $\tilde{\si}=\si$.}
\end{footnote}
 and clearly $|\rho|^2\les (\dn+q_0^2) r^{-2}u_+^{-2}.$
\end{prop}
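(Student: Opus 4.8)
The plan is to mirror the proof of Proposition~\ref{prop:decay:scal}, replacing the Klein--Gordon equation (which there supplies the transversal derivative $D_L\phi$ already sitting inside the energy) by the Maxwell and Bianchi null structure equations for the components of $\tilde F$. First one reduces \eqref{eq:decay:Max:asirho} and \eqref{eq:decay:Max:ab} to $L^{2+\ep}(S^2)$, hence by Sobolev embedding on the unit sphere to $L^2(S(u,v))$, control of ${\sl\cL}_{\Om_{ij}}^{l}$ applied to each null component, and then estimates these spherical norms by the global Sobolev inequality of Lemma~\ref{lem:globalSobolev}(3), together with the companion estimate obtained exactly as in the proof of Proposition~\ref{prop:decay:scal} by integrating from the initial slice along the relevant null hypersurface and using the data bound \eqref{12.26.2.17} for the term on $S(u,-u)$. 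The input fluxes are the bootstrap bounds \eqref{eq:BT:energy} and \eqref{eq:BT:PWE:Max}: the basic energy flux controls $\a,\tilde\rho,\sigma$ (but not $\ab$) on $\H_u$, and $\ab,\tilde\rho,\sigma$ on $\Hb_v$, each with weight $1$ and hence $u_+^{-\ga_0+2\zeta}$ decay, while \eqref{eq:BT:PWE:Max} upgrades $\a$ on $\H_u$, and $\tilde\rho,\sigma$ on $\Hb_v$, to the weight $r^{\ga_0}$. This is exactly the dichotomy in the statement: $\tilde\rho,\a,\sigma$ inherit from the $r^{\ga_0}$ weight the falloff $r^{-2-\ga_0}u_+^{-1}$, whereas $\ab$, available only with weight $1$ on $\Hb_v$, gets the weaker $r^{-2}$ but, being attached to the incoming flux, the stronger $u_+^{-\ga_0-1}$.

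The one genuinely new point is the transversal derivative the Sobolev inequality asks for (${\sl\nab}_L$ of the weighted component on $\H_u$, ${\sl\nab}_{\Lb}$ on $\Hb_v$), which, unlike $D_L\phi$, is not itself an energy quantity for $\a$ or $\ab$. One supplies it in two complementary ways. For $\tilde\rho$ and $\sigma$ on $\Hb_v$ the Maxwell/Bianchi structure equations — applied to ${\sl\cL}_Z^k\tilde F$, for which $\pa^\mu({\sl\cL}_Z^k\tilde F)_{\mu\nu}=({\sl\cL}_Z^k J[\phi])_\nu$ by Lemma~\ref{lem:commutator} — express ${\sl\nab}_{\Lb}(r\tilde\rho)$, ${\sl\nab}_{\Lb}(r\sigma)$ purely through angular derivatives of the other components, the null components of $J[\phi]$, and lower-order terms built from the Minkowski connection coefficients on $S(u,v)$; likewise ${\sl\nab}_{\Lb}(r\a)$ is given by a structure equation on $\Hb_v$, so $\a$ can alternatively be propagated along $\Lb$ from the initial data using the $r^{\ga_0}$-weighted control of $\tilde\rho,\sigma$. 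For the derivatives \emph{not} given by a structure equation, namely ${\sl\nab}_L\a$ (needed if one prefers to work on $\H_u$, where $\a$ carries the $r^{\ga_0}$ weight) and ${\sl\nab}_{\Lb}\ab$, one uses the tensorial analogue of the identity exploited in Proposition~\ref{prop:decay:scal}, i.e. $r\,|{\sl\nab}_L H|\les|{\sl\cL}_{\Om_{0i}}H|+|{\sl\cL}_{\Om_{ij}}H|+u_+|{\sl\nab}_{\Lb}H|$ for $S$-tangent $H$ (and its ${\sl\nab}_{\Lb}$ counterpart), noting that after passing to ${\sl\cL}_Z\tilde F$ each of ${\sl\cL}_{\Om_{0i}}H$, ${\sl\cL}_{\Om_{ij}}H$, ${\sl\nab}_{\pa_\mu}H$ is captured by the fluxes of ${\sl\cL}_Z^k\tilde F$ with $k\le 2$, and that commuting a translation improves the $u_+$ decay ($\zeta(\pa_\mu)=-1$) — which is precisely what produces the extra $u_+^{-1}$. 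The attendant commutator and connection terms are the standard Minkowski ones and are harmless; the $J[\phi]$ contributions are absorbed by placing one factor of $\phi$ or $D\phi$ in $L^\infty$ via the already-proved \eqref{eq:decay:scal:pt}, \eqref{eq:decay:scal:pt:phi}, and the other in the appropriate flux.

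Feeding these into Lemma~\ref{lem:globalSobolev}(3) with the exponent balance $2\ga=\ga_0'+2\ga_2$ chosen so that the two flux integrals on the right match the bootstrap weights, and using \eqref{12.26.2.17} on the initial sphere, gives $\sup_{S(u,v)}|r^{\ga}{\sl\cL}_{\Om_{ij}}^l(\text{component})|^4\les\dn^2\times(\text{the appropriate power of }u_+)$; Sobolev embedding on $S^2$ then removes the rotations and yields \eqref{eq:decay:Max:asirho} and \eqref{eq:decay:Max:ab}. Since every step is carried out for $\tilde F$, only $\tilde\rho=\rho[\tilde F]$ enters, explaining why the $\rho$ estimate is stated for $\tilde\rho$; the bound for the full $\rho$ follows from $|\rho|^2\le 2|\tilde\rho|^2+2q_0^2r^{-4}$ together with $u_+\les r$ in the exterior region $\{r\ge t+R\}$, which gives $|\rho|^2\les(\dn+q_0^2)r^{-2}u_+^{-2}$.

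I expect the main obstacle to be the bookkeeping this forces: the null components of $\tilde F$ are not controlled symmetrically on the two null hypersurfaces, so for each component one must pick simultaneously the hypersurface on which it enjoys the correct $r$-weight, the precise commutation (rotations for the good components, a translation wherever extra $u_+$-decay is needed) that \emph{also} delivers the transversal derivative required by Sobolev through a structure equation or through the $\Om_{0i}$-identity, and an allocation of the at most two available derivatives among all of these, all the while tracking the exact powers of $r$ and $u_+$ so that the flux and bulk weights of \eqref{eq:BT:energy}--\eqref{eq:BT:PWE:Max} close onto the stated rates and the slowly decaying charge part of $\rho$ is kept separated from the outset.
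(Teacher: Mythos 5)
Your framework is right — the decay estimates come from feeding the bootstrap fluxes into Lemma~\ref{lem:globalSobolev}(3) with the correct weight balance, using \eqref{12.26.2.17} on the initial sphere, and Sobolev embedding on $\mathbb{S}^2$ — and you correctly identify that the entire subtlety is supplying the transversal derivatives $\sl\nabla_L\a$, $\sl\nabla_{\Lb}\rho$, $\sl\nabla_{\Lb}\sigma$, $\sl\nabla_{\Lb}\ab$, which are not directly energy quantities. But your two proposed mechanisms for supplying them are not the ones the paper uses, and one of them has a genuine gap.

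The paper never invokes Maxwell--Bianchi null structure equations, nor Lorentz boosts, in this proposition. Instead, it relies on the purely algebraic identities (see \eqref{12.25.3.17}--\eqref{12.26.1.17}, built from \eqref{1.3.1.18}--\eqref{1.3.2.18}) that, for \emph{translations}, the transversal derivative of a null component is bounded by the same null component of $\Lie_\pa G$: e.g.\ $|\sl\nabla_L\a_A[G]|\les|\a_A[\Lie_\pa G]|$, $|\Lb\rho[G]|\les|\rho[\Lie_\pa G]|$, $|\sl\nabla_{\Lb}\ab_A[G]|\les|\ab_A[\Lie_\pa G]|$. These convert the needed transversal derivative directly into the same component of $\Lie_\pa\Lie_{\Om_{ij}}^l\tilde F$, which the bootstrap fluxes \eqref{eq:BT:energy}, \eqref{eq:BT:PWE:Max} bound with the correct weight, and the signature $\zeta(\pa)=-1$ automatically delivers the extra $u_+^{-2}$. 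No $J[\phi]$ estimate, no structure equation, no boost. Your structure-equation route for $\rho,\sigma$ would work but forces you to estimate $J[\phi]$ and angular derivatives of $\ab$ which the paper never has to touch in this proposition.

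The more serious issue is your proposed tensorial $\Om_{0i}$-identity $r|\sl\nabla_L H|\les|\sl\cL_{\Om_{0i}}H|+|\sl\cL_{\Om_{ij}}H|+u_+|\sl\nabla_{\Lb}H|$ together with the claim that $\sl\cL_{\Om_{0i}}H$ is ``captured by the fluxes.'' For a null component $H$ of $\tilde F$, this is not true: the boosts $\Om_{0i}$ are not sphere-tangent, and $\sl\cL_{\Om_{0i}}(\a[\tilde F])$ is \emph{not} equal to $\a[\cL_{\Om_{0i}}\tilde F]$. Boosts mix $L$ with $\Lb$, so $\cL_{\Om_{0i}}\tilde F$ has null components that are combinations of $\a,\ab,\rho,\sigma$ plus their angular derivatives, whereas $\sl\cL_{\Om_{0i}}(\a[\tilde F])$ is the projected Lie derivative of the $S$-tangent $1$-form $\a$ regarded on its own. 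Identities of this ``null component commutes with Lie derivative'' type are available for rotations (\eqref{12.25.5.17}--\eqref{12.26.1.17}) precisely because $\Om_{ij}$ is tangent to $S(u,v)$; they fail for boosts. So the key term $\sl\cL_{\Om_{0i}}H$ in your identity is not a flux quantity, and the step does not close as stated. This is the reason the paper commutes only with rotations and translations here — translations supply the transversal derivative cleanly and give the extra $u_+$-decay for free — and avoids boosts entirely on the Maxwell side.

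Your closing step (deriving the estimate for the full $\rho$ from $\tilde\rho$ and $q_0 r^{-2}$, using $u_+\les r$ in the exterior) is correct, as is the observation that the weight dichotomy between $\a,\tilde\rho,\sigma$ and $\ab$ tracks which fluxes carry the $r^{\ga_0}$ weight.
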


Assuming Proposition \ref{prop:decay:pt:Max},  in view of $F=\tilde{F}+q_0 r^{-2}dt\wedge dr$, we can have the rough estimate
\begin{equation}\label{12.7.3.17}
|F_{X\mu}|^2 \les (\dn+q_0^2) u_+^{-2} r^{2\zeta(X)}, \,\, \forall X\in \Ga.
\end{equation}
\begin{proof}
To prove these pointwise estimates, we  apply Lemma \ref{lem:globalSobolev} with the help of the (weighted) energy flux decay through the outgoing null hypersurface $\H_u$ or the incoming null hypersurfaces $\Hb_v$.
 We first need some preliminary results. Let us show that for any  $2$-form $G$,
\begin{align}
|\Lb(\rho[G])|\les |\rho[\Lie_\pa G]|,\qquad & |\Lb (\sigma[G])|\les |\sigma[\Lie_\pa G]|,\label{12.25.3.17}\\
|{\sl\nabla}_L \a_A[G]|\les |\a[\Lie_\pa G]|,\qquad& |{\sl\nabla}_{\Lb} \ab_A[G]|\les |\ab[\Lie_\pa G]|,\label{12.25.4.17}\\
\a[\Lie_{\Omega_{ij}}^l G]= {\sl\cL}_{\Omega_{ij}}^l\a[G],\qquad&\ab[\Lie_{\Omega_{ij}}^l G]= {\sl\cL}_{\Omega_{ij}}^l\ab[G],\label{12.25.5.17} \\
\rho[\Lie_{\Omega_{ij}}^l G]= \Omega_{ij}^l\rho[G],\qquad&\sigma[\Lie_{\Omega_{ij}}^l G]= \Omega_{ij}^l\si[G].\label{12.26.1.17}
\end{align}
Indeed, we recall from \cite[Page 58-59]{LindbladMKG} that
\begin{equation}\label{1.3.1.18}
\pa_r \rho=\omega^i \rho[\Lie_{\pa_i}G], \quad \pa_r \sigma=\omega^i \sigma[\Lie_{\pa_i} G],\quad \pa_r \a_A[G]=\omega^i \a_A[\Lie_{\pa i} G].
\end{equation}
It is easy to check that
\begin{equation}\label{1.3.2.18}
\pa_t \rho[G]=\rho[\Lie_{\pa_t}G], \quad \pa_t \sigma[G]=\sigma[\Lie_{\pa_t} G], \quad \pa_t \a_A[G]=\a_A[\Lie_{\pa t} G].
\end{equation}
Hence we can obtain (\ref{12.25.3.17}) by combining the above identities.
(\ref{12.25.5.17}) and (\ref{12.26.1.17}) can be proved in view of \cite[Page 58-59 (5.20), (5.21) and (5.28)]{LindbladMKG}.

 To see (\ref{12.25.4.17}), we first note
\begin{equation*}
\sl{\nabla}_L\a_A[G]=L \a_A[G]-\a[G]_{\sl{\nabla}_L e_A}=\pa_t \a_A[G]+\pa_r\a_A[G],
\end{equation*}
where we used  the fact that $\sl{\nabla}_L e_A=0$.  Hence, in view of (\ref{1.3.1.18}) and (\ref{1.3.2.18}),  we  can derive the first inequality in (\ref{12.25.4.17}). The second inequality in (\ref{12.25.4.17}) can be proved in the same way.

  With the help of (\ref{12.25.5.17}) and  (\ref{12.26.1.17}), the estimate (\ref{12.26.2.17}) holds for the components of $ {\sl\cL}_{\Omega_{ij}}^l\a[\tilde F]$, $ {\sl\cL}_{\Omega_{ij}}^l\ab[\tilde F]$, $\Omega_{ij}^l \rho[\tilde F]$ and $\Omega_{ij}^l \si[\tilde F]$, which gives the bounds for the initial values needed in the sequel.

Let's first consider the decay estimate for $\a$. Apply Lemma \ref{lem:globalSobolev}(3)  with $H=\a$, $\ga_0'=0$ and $\ga=\ga_2=1+\f12 \ga_0$.
By using (\ref{12.25.5.17}), we can derive
\begin{align*}
\int_{\H_u^{-u_2}}|{\sl\cL}_{\Om_{ij}}^k\tilde \a|^2 r^{\ga_0} =\int_{\H_u^{-u_2}}|\a[\cL_{\Om_{ij}}^k\tilde F]|^2 r^{\ga_0}\les \dn, \quad \forall\, k\leq 2
\end{align*}
and
\begin{align*}
\int_{\H_u^{-u_2}}|{\sl{\nabla}}_L{\sl\cL}_{\Om_{ij}}^l (r^{1+\f12 \ga_0}\tilde \a)|^2 dvd\om \les\int_{\H_u^{-u_2}} (r^{\ga_0}|\a[\cL_{\pa}\cL_{\Om_{ij}}^l \tilde F]|^2+r^{\ga_0-2}|\a[\Lie_{\Om_{ij}}^l \tilde F]|^2 )\les \dn u_+^{-2}.
\end{align*}
The integral on $S(u, -u)$ is controlled in view of (\ref{12.26.2.17}). Then by using (\ref{12.25.2.17}) we obtain that
\begin{align*}
r^{2+\ga_0}|\a|^2\les \dn u_+^{-1}.
\end{align*}
For the other components, we make use of the energy flux through the incoming null hypersurface. However for $\tilde{\rho}$ and $\si$, the extra decay in $r$ relies on the $r$-weighted energy flux through the incoming null hypersurface while for $\ab$ we can only use the energy flux. Let $-\frac{R}{2}\le -u< v\le v_*.$  For $\tilde{\rho}$ and $\si$, from the bootstrap assumption \eqref{eq:BT:PWE:Max}, we first conclude that
\begin{align*}
\int_{\Hb_v^{u}}r^{\ga_0}(|\rho[\Lie_\pa\cL_{\Omega_{ij}}^k\tilde{F}]|^2+|\si[\Lie_\pa  \cL_{\Omega_{ij}}^k \tilde{F}]|^2)\les \dn u_+^{-2},\quad   \, \forall\, k\leq 1.
\end{align*}
By using (\ref{12.25.3.17}) and (\ref{12.26.1.17}), we have
\begin{equation*}
|\Lb\cL_{\Omega_{ij}}^k\rho[\tilde F]|\les |\rho[\Lie_\pa \cL_{\Omega_{ij}}^k\tilde{F}]|, \quad |\Lb\cL_{\Omega_{ij}}^k\si[\tilde{F}]|\les |\si[\Lie_\pa \cL_{\Omega_{ij}}^k\tilde{F}]|.
\end{equation*}
We therefore can derive that
\begin{align*}
\int_{\Hb_v^{u}}r^{\ga_0}\big(|\Lb(\cL_{\Omega_{ij}}^k\tilde{\rho})|^2+|\Lb (\cL_{\Omega_{ij}}^k \tilde\si)|^2\big)\les \dn u_+^{-2},\quad \forall\, k\leq 1.
\end{align*}
By using  (\ref{12.26.1.17}) and
 the bootstrap assumption \eqref{eq:BT:PWE:Max}, we have
\begin{align*}
\int_{\Hb_v^{u}}r^{\ga_0}|\Om_{ij}^k(\tilde{\rho}, \tilde\si)|^2  = \int_{\Hb_v^{u}}r^{\ga_0}|(\rho, \sigma)[\cL_{\Om_{ij}}^k\tilde F]|^2 \les \dn, \quad \forall\, k\leq 2.
\end{align*}
Combining the above two estimates, we can derive
\begin{align*}
\int_{\Hb_v^{u}}|\Lb\big(\cL_{\Omega_{ij}}^k r^{1+\f12 \ga_0}(\tilde{\rho}, \tilde\si)\big)|^2 dud\om&\les \int_{\Hb_v^{u}}r^{\ga_0}|\Lb\cL_{\Omega_{ij}}^k(\tilde{\rho}, \tilde{\si})|^2 +r^{\ga_0-2}|\cL_{\Omega_{ij}}^k(\tilde{\rho}, \tilde\si)|^2 \\
&\les \dn u_+^{-2},\quad \forall\, k\leq 1.
\end{align*}
We now apply Lemma \ref{lem:globalSobolev} (1) to $f=(\tilde{\rho}, \tilde\si)$, $\ga=\ga_2=1+\f12 \ga_0$ on the incoming null hypersurface $\Hb_v^{u}$.  In this case, we replace $L$ derivative   by $\Lb$ derivative.
The integral on the sphere $S(-v, v)$ can be bounded  in view of (\ref{12.26.2.17}). As we assumed that $\mathcal{E}_{2, \ga_0}\leq \dn$, we then derive from Lemma \ref{lem:globalSobolev} that
\begin{align*}
r^{2+\ga_0}(|\tilde{\rho}|^2+|\si|^2)\les \dn u_+^{-1}.
\end{align*}
Finally for $\tilde\ab$, we make use of the energy flux through the incoming null hypersurface. In Lemma \ref{lem:globalSobolev} (3), let $\ga=1$, $\ga_2=1$, $H=\tilde\ab$ on the incoming null hypersurface $\Hb_v^{u}$. In view of (\ref{12.25.5.17}), we can bound that
\begin{align*}
\int_{\Hb_v^{u}}|{\sl\cL}_{\Om_{ij}}^k \tilde\ab|^2  \les E[\cL_{\Om_{ij}}^k \tilde F](\Hb_v^{u})\les \dn u_+^{-\ga_0},\quad \forall\, k\leq 2.
\end{align*}
In view of (\ref{12.25.4.17}) and (\ref{12.25.5.17}), we can also show that for $l\le 1$,
\begin{align*}
\int_{\Hb_v^u}|{\sl\nabla}_{\Lb}{\sl\cL}_{\Om_{ij}}^l (r\tilde\ab)|^2 dvd\om &\les \int_{\Hb_v^u}|{\sl\nabla}_{\Lb} {\sl\Lie}_{\Om_{ij}}^l \tilde \ab|^2+r^{-2}|{\sl\Lie}_{\Om_{ij}}^l \tilde\ab|^2 \\
&\les \int_{\Hb_v^u}|\ab[\Lie_\pa \Lie_{\Om_{ij}}^l \tilde{F}]|^2+r^{-2}|\ab[\Lie_{\Om_{ij}}^l \tilde F]|^2 \\
&\les \dn u_+^{-2-\ga_0}.
\end{align*}
The integral on $S(-v, v)$ is controlled by (\ref{12.26.2.17}). Thus by using  Lemma \ref{lem:globalSobolev}, we derive that
\begin{align*}
r^2|\ab|^2\les \dn u_+^{-1-\ga_0}.
\end{align*}
\end{proof}

As a corollary of  Proposition \ref{prop:decay:pt:Max}, we derive below a result which will be crucial in the last subsection.
\begin{cor}
Let $\I_Y[F]=r (|F_{LY}|+|F_{AY}) +u_+ |F_{\Lb Y}|$ for $Y\in \Ga$. In the exterior region $\{t+R\leq r, v\le v_*\}$,  there holds
\begin{equation}\label{1.23.1.17}
\I_Y^2 [F]\les (q_0^2+\dn +\dn r^{2-\ga_0}u_+^{-1}) u_+^{2\zeta(Y)}.
\end{equation}
\end{cor}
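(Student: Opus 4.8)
The plan is to check the inequality one generator at a time, using the spanning set of $\Ga$ consisting of the translations $Y=\pa_\mu$, the rotations $Y=\Om_{ij}$, and the boosts $Y=\Om_{0j}$. For the translations the estimate is immediate from \eqref{12.7.3.17}: every one of $F_{LY}$, $F_{AY}$, $F_{\Lb Y}$ is a combination with $O(1)$ coefficients of components $F_{X\mu}$ with $X\in\{\pa_t,\pa_i\}$ (since $L,\Lb$ are $\pa_t\pm\om^i\pa_i$ and each $e_A$ has bounded Cartesian coefficients), so $|F_{LY}|+|F_{AY}|+|F_{\Lb Y}|\les\sqrt{\dn+q_0^2}\,u_+^{-1}r^{-1}$; since $u_+\le r$ in the exterior region this gives $\I_{\pa_\mu}^2[F]\les(\dn+q_0^2)u_+^{-2}=(\dn+q_0^2)u_+^{2\zeta(\pa_\mu)}$, already stronger than claimed. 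So the substance is the rotations and boosts, both of signature $0$.

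For these I would write $Y=f^LL+f^{\Lb}\Lb+Y^\perp$ with $Y^\perp$ an $S$-tangent vector field and expand the contractions of $F$ with $Y$ in the null decomposition. Using $F_{LL}=F_{\Lb\Lb}=0$, $F_{L\Lb}=2\rho$, $F_{Le_A}=\a_A$, $F_{\Lb e_A}=\ab_A$ and the fact that $F$ restricted to $TS(u,v)$ equals $\si$ times the area form, one obtains $|F_{LY}|\les|f^{\Lb}||\rho|+|Y^\perp||\a|$, $|F_{\Lb Y}|\les|f^L||\rho|+|Y^\perp||\ab|$ and $|F_{AY}|\les|f^L||\a|+|f^{\Lb}||\ab|+|Y^\perp||\si|$. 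For $Y=\Om_{ij}$ one has $f^L=f^{\Lb}=0$ and $|Y^\perp|\les r$ (as $|\Om_{ij}|^2=x_i^2+x_j^2\le r^2$), hence $\I_Y[F]\les r^2(|\a|+|\si|)+ru_+|\ab|$; for $Y=\Om_{0j}$, reading off the identity $\Om_{0j}=\om_j(vL-u\Lb)+t(\pa_j-\om_j\pa_r)$ recorded above, $|f^L|=v\le r$, $|f^{\Lb}|=|u|\le u_+$ and $|Y^\perp|\les t\le r$, hence $\I_Y[F]\les ru_+|\rho|+r^2(|\a|+|\si|)+ru_+|\ab|$. It then remains to insert the pointwise bounds of Proposition \ref{prop:decay:pt:Max}, namely $|\a|^2+|\si|^2\les\dn r^{-2-\ga_0}u_+^{-1}$, $|\ab|^2\les\dn r^{-2}u_+^{-\ga_0-1}$ and $|\rho|^2\les(\dn+q_0^2)r^{-2}u_+^{-2}$, and simplify with $u_+\le r$ and $u_+^{1-\ga_0}\le1$ (valid as $\ga_0>1$): both right-hand sides become $\les q_0^2+\dn+\dn r^{2-\ga_0}u_+^{-1}$, which is exactly \eqref{1.23.1.17} since $\zeta(\Om_{ij})=\zeta(\Om_{0j})=0$.

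The computation is entirely routine; the one place that needs attention is the boost, where the mixed $L$--$\Lb$ contractions $F_{LY}$ and $F_{\Lb Y}$ pick up a full factor $v\sim r$ (respectively $|u|\sim u_+$) multiplying $\rho$. Multiplied by the $r$ and $u_+$ weights in $\I_Y[F]$, these must be absorbed using only $|\rho|\les r^{-1}u_+^{-1}$, including the non-chargeless part $q_0r^{-2}$, so that the net contribution is a bounded multiple of $q_0^2+\dn$ with no growth in $r$; this is precisely how a bare $q_0^2$, and nothing worse, enters. By contrast the $\a$, $\si$ contractions carry the full weight $r^2$ but only the slow decay $r^{-2-\ga_0}u_+^{-1}$, which forces the lossy factor $r^{2-\ga_0}u_+^{-1}$, harmless because $1<\ga_0<2$. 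I expect no essential obstacle beyond this bookkeeping.
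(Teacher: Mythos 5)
Your proof is correct and follows essentially the same approach as the paper: expand the contractions $F_{LY}$, $F_{\Lb Y}$, $F_{AY}$ in the null decomposition of $F$, keep track of the $r$ and $u_+$ weights coming from $Y$, and then insert the pointwise bounds of Proposition \ref{prop:decay:pt:Max}. The only cosmetic difference is that the paper lumps rotations and boosts together under the slightly looser bound $\I_Y[F]\les r^2(|\a|+|\si|)+ru_+(|\ab|+|\rho|)$, whereas you note that the $\rho$ contribution is absent for pure rotations; both give the stated estimate.
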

\begin{proof}
We first can check that  if $Y=\pa$,  $\I_Y [F]\les r|F|$, and if $Y=\Omega_{ij}, \Omega_{0i}$
\begin{equation*}
\I_Y[F]\les r^2(|\a|+|\sigma|)+ru_+(|\ab|+|\rho|).
\end{equation*}
Thus in view of Proposition \ref{prop:decay:pt:Max},  if $ \zeta(Y)=0,\, Y\in \Ga$,
\begin{equation*}
\I_Y[F]\les \dn^\f12 r^{1-\f12 \ga_0}u_+^{-\f12} +\frac{u_+}{r}q_0,
\end{equation*}
and if $\zeta(Y)=-1, Y\in \Ga$,
\begin{equation*}
\I_Y[F]\les r^{-1}q_0+\dn^\f12 u_+^{-\frac{1+\ga_0}{2}}\les (\dn^\f12+q_0)u_+^{-1}.
\end{equation*}
The result then follows by combining the above estimates.
\end{proof}

\subsection{Energy decay estimates for the Maxwell field}
In this subsection, we obtain energy and weighted energy estimates for the Maxwell field.
We prove the following result.
\begin{prop}
\label{prop:BT:imp:Max}
Under the bootstrap assumptions \eqref{eq:BT:energy}-\eqref{eq:BT:PWE:Max}, the energy fluxes for the Maxwell field verify the following decay estimates
\begin{align}
\label{eq:BT:imp:Max:energy}
&E[\cL_Z^k \tilde{F}](\H_{u_1}^{-u_2})+E[\cL_Z^k \tilde{F}](\Hb_{-u_2}^{u_1})\les (\mathcal{E}_{k, \ga_0}+\dn^2) (u_1)_+^{-\ga_0+2\zeta(Z^k)}
\end{align}
as well as the $r$-weighted energy decay estimates
\begin{align}
\notag
 & \iint_{\mathcal{D}_{u_1}^{- u_2}}r^{\ga_0-1} (|\a[\cL_Z^k\tilde F]|^2+|\rho[\cL_Z^k\tilde{F}]|^2+|\sigma[\cL_Z^k\tilde F]|^2)+\int_{\H_{u_1}^{-u_2}}r^{\ga_0}|\a[\cL_Z^k\tilde F]|^2 \\
 \label{eq:BT:imp:Max:pwe}
 &+\int_{\Hb_{-u_2}^{u_1} }r^{\ga_0}(|\rho[\cL_Z^k\tilde{F}]|^2+|\si[\cL_Z^k\tilde F]|^2)\les (\mathcal{E}_{k, \ga_0}+\dn^2) (u_1)_+^{2\zeta(Z^k)}
\end{align}
for all  $-v_*\le u_2<u_1\le-\frac{R}{2}$,   $k\leq 2$, $Z^k\in \Gamma^k$.
\end{prop}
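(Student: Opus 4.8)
The plan is to commute the Maxwell part of \eqref{1.6.5.18} with strings $Z^k=Z_1\cdots Z_k\in\Ga^k$, $k\le 2$, feed the resulting inhomogeneous Maxwell systems into the energy identities \eqref{eq:EEst:id:out} and \eqref{pWescaout} applied with the scalar field set to $0$ (as permitted by the Remark following those identities), and absorb the spacetime source integrals, closing by the Gr\"onwall inequality of Lemma \ref{1.6.4.18}. The starting point is that, away from the origin, the Coulomb field $q_0r^{-2}\,dt\wedge dr$ is a closed $2$-form solving Maxwell's vacuum equations, hence so is $\cL_Z^k(q_0r^{-2}\,dt\wedge dr)$ for each Killing $Z$; together with Lemma \ref{lem:commutator} and the first equation in \eqref{1.6.5.18} this gives, in the exterior region, $\partial^\nu(\cL_Z^k\tilde F)_{\mu\nu}=\partial^\nu(\cL_Z^k F)_{\mu\nu}=(\cL_Z^k J[\phi])_\mu$. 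We shall use the schematic expansion
\[
\cL_Z^k J[\phi]=\sum_{a+b\le k}\Im\big(D_Z^a\phi\cdot\overline{D\,D_Z^b\phi}\big)+\big(F\cdot\phi\cdot D_Z^{\le k}\phi\big)+\big(\cL_Z^{\le k}F\cdot\phi\cdot\phi\big),
\]
in which the last two groups collect the lower-order terms produced by the $F_{Z\mu}\phi$ commutator corrections of Lemma \ref{lem:commutator}, the implicit $A_Z$ factors are estimated exactly like the corresponding $\partial\phi$ factors, and the pieces involving the charge tails of $A$ and $F$ are tracked separately, as in the proof of Lemma \ref{prop:Q:need:r:all}, since $\E_{2,\ga_0}$ does not control $|q_0|$.

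Fix $-v_*\le u_2<u_1\le -R/2$ and write $\G:=\cL_Z^k\tilde F$, $\mathcal J:=\cL_Z^k J[\phi]$. Applying \eqref{eq:EEst:id:out} with $X=\partial_t$, and \eqref{pWescaout} with $X=r^{\ga_0}L$, $p=\ga_0\in(1,2)$, $\chi=0$, $Y=0$, to $(\G,0)$ over $\mathcal D_{u}^{-u_2}$, $u\in[u_1,u_2]$, yields
\begin{align*}
&E[\G](\H_u^{-u_2})+E[\G](\Hb_{-u_2}^u)=E[\G](\Si_0^{u,u_2})+2\iint_{\mathcal D_u^{-u_2}}\G_{0\ga}\mathcal J^\ga,\\
&\iint_{\mathcal D_u^{-u_2}}r^{\ga_0-1}|(\a,\rho,\si)[\G]|^2+\int_{\H_u^{-u_2}}r^{\ga_0}|\a[\G]|^2+\int_{\Hb_{-u_2}^u}r^{\ga_0}(\rho^2+\si^2)[\G]\\
&\qquad\les\int_{\Si_0^{u,u_2}}r^{\ga_0}|(\a,\rho,\si)[\G]|^2+\iint_{\mathcal D_u^{-u_2}}r^{\ga_0}|\G_{L\ga}\mathcal J^\ga|.
\end{align*}
The data terms are $\les\E_{k,\ga_0}(u)_+^{-\ga_0+2\zeta(Z^k)}$ and $\les\E_{k,\ga_0}$ respectively, by the $L^p(S(u,-u))$ bounds and integration from spatial infinity established earlier, together with the equations of \eqref{1.6.5.18} used to convert the $\partial_t$ factors appearing in $\cL_Z^k\tilde F|_{\Si_0}$ into spatial ones. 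Two structural facts govern the source integrals: since $\G_{LL}=0$, the contraction $r^{\ga_0}\G_{L\ga}\mathcal J^\ga$ involves only $\rho[\G]$ and $\a[\G]$ -- precisely the components carrying the $r^{\ga_0}$ weight above -- and never the bad component $\ab[\G]$; and in $\G_{0\ga}\mathcal J^\ga$ the component $\ab[\G]$ meets only $\D$-derivatives of $D_Z^b\phi$, while $\rho[\G]$ and $\a[\G]$ meet $D_L$ and $D_{\Lb}$, so the only dangerous resonance, $\rho[\G]\cdot D_{\Lb}D_Z^b\phi$, is tamed by the decay $|\tilde\rho|\les\dn^{1/2}r^{-1-\ga_0/2}u_+^{-1/2}$ of Proposition \ref{prop:decay:pt:Max}. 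This is the version of the null condition relevant here.

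For each term in the expansion of $\mathcal J$ we place its lowest-order scalar factor ($\phi$, $D\phi$, or $D_Z\phi$) in $L^\infty$ using Proposition \ref{prop:decay:scal} -- in particular $|\phi|^2\les\dn r^{-3}u_+^{-\ga_0}$ for the term $a=0$ -- and estimate the remaining higher-order factor by Cauchy--Schwarz against the appropriate bootstrap flux \eqref{eq:BT:energy}--\eqref{eq:BT:PWE:Max}: $\mathcal D_u^{-u_2}$ is foliated by the outgoing cones $\H_{u'}$ for the $D_L$- and $\D$-components of $D_Z^b\phi$ and for $\a,\rho,\si$ of the Maxwell field, and by the incoming cones $\Hb_{v}$ (using $v\le r\le 2v$) for the $D_{\Lb}$-components and for $\ab,\rho,\si$. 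The charge-tail pieces use $|q_0r^{-2}dt\wedge dr|\les|q_0|r^{-2}$ and the resulting $r^{-2}$ gain along each cone. After these reductions every source integral is either (i) already $\les(\E_{k,\ga_0}+\dn^2)(u)_+^{-\ga_0+2\zeta(Z^k)}$, respectively $\les(\E_{k,\ga_0}+\dn^2)(u)_+^{2\zeta(Z^k)}$, with a spare power of $\dn^{1/2}$, or (ii) of the self-referential form $\int_u^{u_2}(u')_+^{-a}E[\G](\H_{u'}^{-u_2})\,du'+\int_{-u}^{-u_2}v^{-b}E[\G](\Hb_v^u)\,dv$ with $a,b>1$ and coefficient $\les\dn^{1/2}$ (arising when the higher-order factor is itself a component of $\G$), together with its $r^{\ga_0}$-weighted analogue. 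The two displayed relations therefore combine into an inequality of the shape \eqref{1.6.1.18} with $f_0(\tau)\les(\E_{k,\ga_0}+\dn^2)\tau^{-\ga_0+2\zeta(Z^k)}$, resp. $f_0(\tau)\les(\E_{k,\ga_0}+\dn^2)\tau^{2\zeta(Z^k)}$; since the fluxes are monotone in $u_2$, Lemma \ref{1.6.4.18} yields \eqref{eq:BT:imp:Max:energy} and \eqref{eq:BT:imp:Max:pwe}. One may either run an induction on $k$ or simply note that all orders $<k$ entering $\mathcal J$ are already supplied by the bootstrap assumptions while all self-referential order-$k$ contributions carry a factor of $\dn$.

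The principal difficulty is the top-order source term $\Im(\phi\cdot\overline{D\,D_Z^2\phi})$, which carries three scalar derivatives -- one more than $\E_{2,\ga_0}$ controls on a single cone. To handle it one must combine the strong decay $|\phi|^2\les\dn r^{-3}u_+^{-\ga_0}$ with the splitting of $D\,D_Z^2\phi$ into its $D_{\Lb}$-part, controlled only through the incoming flux $E[D_Z^2\phi](\Hb_v)$, and its $(D_L,\D)$-part, controlled through $E[D_Z^2\phi](\H_u)$, and invoke the null structure above so that the $D_{\Lb}$-part only ever pairs against $\rho[\G]=\tilde\rho$ (with its extra $r^{-1-\ga_0/2}$) or against the integrable weights coming from $|\phi|^2$. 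Arranging the $r$- and $u_+$-exponents so that both the unweighted and the $r^{\ga_0}$-weighted source integrals close simultaneously, and so that the powers of $(u_1)_+$ land exactly on $-\ga_0+2\zeta(Z^k)$ and on $2\zeta(Z^k)$, is the delicate bookkeeping of the argument.
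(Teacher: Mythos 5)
Your overall route is the same as the paper's: commute the Maxwell equation (in its chargeless form $\pa^\nu\tilde F_{\mu\nu}=J[\phi]_\mu$) with $Z^k$, apply the energy identity \eqref{eq:EEst:id:out} (with $X=\pa_t$) and the $r$-weighted identity \eqref{pWescaout} (with $p=\ga_0$) to $(\G,\phi)=(\cL_Z^k\tilde F,0)$, Cauchy--Schwarz the source integrals into a flux piece plus a weighted $L^2$ norm of $\cL_Z^k J[\phi]$, estimate the latter using the pointwise decay of $\phi$ and the bootstrap fluxes, and close by Lemma~\ref{1.6.4.18}. The paper organizes the bookkeeping differently: it isolates a single statement, Proposition~\ref{prop:est4J}, giving uniform weighted $L^2$ control
\(\iint r^{\ga_0}u_+^{1+\ep}|\cL_Z^k\J|^2+r^{\ga_0+1}|\cL_Z^kJ_L|^2+u_+^{1+\ga_0+\ep}|\cL_Z^kJ_{\Lb}|^2\les\dn^2(u_1)_+^{2\zeta(Z^k)}\),
with a distinct weight on each null component, and then both \eqref{eq:BT:imp:Max:energy} and \eqref{eq:BT:imp:Max:pwe} follow from this one estimate by a Cauchy--Schwarz tuned so that the flux halves are absorbable and the $J$-halves are dominated by the quantity above. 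Your inline term-by-term treatment is a less economical but equivalent version of the same calculation.

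Two of your ``structural'' remarks are inaccurate and worth correcting, even though they do not change the outcome. First, in $\G_{0\ga}\mathcal J^\ga$ you have $\a[\G]$ pairing with $\J$ (the angular part of $J$), not with $D_L$- or $D_\Lb$-derivatives; only $\rho[\G]$ pairs with $J_L$ and $J_{\Lb}$. Second, you tame $\rho[\G]\cdot J_{\Lb}$ partly by ``the decay $|\tilde\rho|\les\dn^{1/2}r^{-1-\ga_0/2}u_+^{-1/2}$, since $\rho[\G]=\tilde\rho$''; but $\rho[\G]=\rho[\cL_Z^k\tilde F]\ne\tilde\rho$ when $k\ge1$, and no pointwise bound on $\rho[\cL_Z^2\tilde F]$ is available at top order. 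The correct mechanism -- which you also mention -- is to Cauchy--Schwarz $\rho[\G]\cdot J_{\Lb}$ against the $\rho[\G]$-flux and against $u_+^{1+\ga_0+\ep}|J_{\Lb}|^2$, then use the strong decay $|\phi|^2\les\dn r^{-3}u_+^{-\ga_0}$ inside $J_{\Lb}\sim\phi\cdot\overline{D_{\Lb}D_Z^b\phi}+\cdots$; the pointwise decay of $\tilde\rho$ only enters when a Maxwell curvature factor appears inside $\cL_Z^kJ$ itself (the cubic pieces). Likewise the remark about estimating ``implicit $A_Z$ factors like $\partial\phi$'' is a distraction: everything is phrased gauge-covariantly in terms of $D_Z\phi$ and $F_{Z\mu}$, and the $F_{Z\mu}$ factors are handled by the pointwise Maxwell decay \eqref{12.7.3.17}, with the charge contribution kept separate as you say.
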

By choosing $\mathcal{E}_{2, \ga_0}$ and $\dn$ suitably small, we can then  improve the bootstrap assumption \eqref{eq:BT:PWE:Max} as well as the bootstrap
 assumption   \eqref{eq:BT:energy} for the Maxwell  field.   The proof for the above proposition is based on the following estimates on  $J[\phi]=\Im(\phi \cdot\overline{D\phi})$.
\begin{prop}
\label{prop:est4J}
Under the bootstrap assumptions \eqref{eq:BT:energy}-\eqref{eq:BT:PWE:Max}, we can bound the term $J[\phi]$ as follows:
\begin{equation}
\label{eq:J:est:need:all}
\begin{split}
\iint_{\mathcal{D}_{u_1}^{-u_2}} r^{\ga_0}u_+^{1+\ep}|\cL_Z^k \J|^2+r^{\ga_0+1}|\cL_Z^k J_{L}|^2 +u_+^{1+\ga_0+\ep}|\cL_Z^k J_{\Lb}|^2 \les \dn^2 (u_1)_+^{2\zeta(Z^k)}
\end{split}
\end{equation}
for all $-v_*\le u_2<u_1\le-\frac{R}{2}$,  $k\leq 2$, $Z^k\in \Gamma^k$, where $\cL_Z^k \J$, $\cL_Z^k J_L$ and $\cL_Z^k J_{\Lb}$ represent the angular, $L$ and $\Lb$ components of the one form $\cL_Z^k J[\phi]$ respectively.
\end{prop}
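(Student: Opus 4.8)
The starting point is the Leibniz expansion of $\cL_Z^k J[\phi]$. Since $J[\phi]_\mu=\Im(\phi\,\overline{D_\mu\phi})$, repeated use of the curvature identity \eqref{1.17.1.18} and Lemma \ref{lem:commutator} shows that $\cL_Z^k J[\phi]$ is a finite linear combination of bilinear terms $\Im\big(D_{Z^{k_1}}\phi\cdot\overline{D_\mu D_{Z^{k_2}}\phi}\big)$ with $k_1+k_2\le k$ and $\zeta(Z^{k_1})+\zeta(Z^{k_2})=\zeta(Z^k)$, together with cubic terms of the schematic form $\cL_Z^{a}F\cdot D_Z^{b}\phi\cdot D_Z^{c}\phi$ with $a+b+c\le k-1$ (produced by the curvature remnants in the commutators), the simplest being $F_{Z\mu}|\phi|^2$. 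The cubic terms are the easiest: bounding $F$ pointwise by \eqref{12.7.3.17} and \eqref{eq:decay:Max:ab}, and the scalar factors by Proposition \ref{prop:decay:scal}, each such contribution, after integrating the resulting weight over $\mathcal D_{u_1}^{-u_2}$, is $\les (\dn+q_0^2)\dn^2\,(u_1)_+^{2\zeta(Z^k)}$; this is the only place the (possibly large) charge enters, and it does so multiplied by an extra power of $\dn$, hence harmlessly.

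For the bilinear terms the workhorse is a H\"older split: since $k_1+k_2\le 2$, at least one of the two factors has order $\le 1$, and we place it in $L^\infty$ by Propositions \ref{prop:decay:scal} and \ref{prop:decay:pt:Max}, keeping the remaining factor in $L^2$ on a null foliation. For the $\J$ and $J_L$ components we foliate $\mathcal D_{u_1}^{-u_2}$ by the outgoing cones $\H_u$ and use the $r$-weighted flux \eqref{eq:BT:PWE:scal} (which controls $\int_{\H_u}r|D_L D_Z^k\phi|^2$) together with \eqref{eq:BT:energy}; using $u_+\les r$ and $v\le r\le 2v$ in the exterior, the bounds $|\phi|^2\les\dn r^{-3}u_+^{-\ga_0}$ and $|D_Z\phi|^2\les\dn r^{-5/2+\ep}u_+^{1/2-\ga_0+2\zeta(Z)}$ convert to powers of $u_+$ that are integrable in $u$, and one arrives at $\dn^2(u_1)_+^{2\zeta(Z^k)}$. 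For $J_{\Lb}$ one instead foliates by the incoming cones $\Hb_v$, since $\int_{\Hb_v}|D_{\Lb}D_Z^k\phi|^2\le E[D_Z^k\phi](\Hb_v^{u_1})$, which for every $v$ is bounded by the data flux on $\Si_0^{u_1}$ exactly as in the proof of Proposition \ref{prop:Energyflux:decay:0}; one then uses the pointwise decay of $\phi$ and $D_{\Lb}\phi$ for the factor carrying no $\Lb$-derivative and integrates in $v$ over $[(u_1)_+,-u_2]$ after trading $r$-weights for $v$-weights via $r\ge v$.

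The delicate point — which I expect to be the real obstacle — is the top-order ($k=2$) estimate of $J_{\Lb}$ in the terms where a $D_{\Lb}$-derivative lands on a once-differentiated scalar, namely $\Im\big(D_{Z_1}\phi\cdot\overline{D_{\Lb}D_{Z_2}\phi}\big)$ with $Z_1,Z_2\in\Ga$. For these the naive ``pointwise $\times$ flux'' estimate, with $D_{Z_1}\phi$ in $L^\infty$ and $D_{\Lb}D_{Z_2}\phi$ controlled only by the unweighted flux $E[D_{Z_2}\phi](\Hb_v)$, yields at best a bound of the form $\dn^2(u_2)_+^{2\ep}(u_1)_+^{-\ga_0+2\zeta}$, whose growth in $-u_2$ (hence in $v_*$) is not acceptable. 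Two ingredients remedy this. First, the identity $\Im(\chi\,\overline{D_{\Lb}\psi})=\Lb\big(\Im(\chi\bar\psi)\big)+\Im\big(\psi\,\overline{D_{\Lb}\chi}\big)$ moves the $\Lb$-derivative onto the lower-order factor; applied to the terms with $k_1<k_2$ (in particular $\Im(\phi\,\overline{D_{\Lb}D_Z^2\phi})$) it reduces matters to $\phi\cdot D_{\Lb}(\text{lower order})$, for which the strong decay $|\phi|^2\les\dn r^{-3}u_+^{-\ga_0}$ of Proposition \ref{prop:decay:scal} makes the $v$-integral converge uniformly. Second, for the genuinely symmetric term with $Z_1,Z_2$ both of order one, one needs an $r$-weighted control of $D_{\Lb}$-derivatives of $\phi$ on $\Hb_v$ (or in the bulk), which is \emph{not} one of the bootstrap quantities; this is obtained by commuting $\Box_A\phi=\phi$ with $D_{\Lb}$ — although $\Lb$ is not Killing, the commutator produces only lower-order errors with $1/r$ and $F$ coefficients that are controlled in the exterior — and running the $r^pL$-multiplier identity \eqref{pWescaout} (and, if needed, an $\Lb$-type multiplier whose wrong-signed boundary contributions are absorbed by the standard fluxes of Proposition \ref{prop:Energyflux:decay:0} and \eqref{eq:BT:energy}).

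Putting these together: the bilinear terms away from the top-order $J_{\Lb}$ difficulty are routine H\"older estimates along the null foliations, each carrying a factor $\dn^{1/2}$ from each of its two fields and therefore contributing $\les\dn^2$ after integration in the null parameter; the cubic terms contribute $\les(\dn+q_0^2)\dn^2$; and the top-order $J_{\Lb}$ terms contribute $\les\dn^2$ after the refinement above. The powers of $(u_1)_+$ on the right-hand side are tracked throughout by adding up the signatures $\zeta$, giving the bound \eqref{eq:J:est:need:all} uniformly in $v_*$. The main difficulty, and the only non-routine step, is the last one: handling $\Im(D_{Z_1}\phi\,\overline{D_{\Lb}D_{Z_2}\phi})$ at top order without loss in $-u_2$.
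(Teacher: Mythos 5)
Your decomposition of $\cL_Z^k J[\phi]$ into bilinear pieces $\Im\bigl(D_{Z^{k_1}}\phi\cdot\overline{D_\mu D_{Z^{k_2}}\phi}\bigr)$ plus cubic terms, the $L^\infty$-times-flux split for the bilinears (outgoing cones for the $\J$ and $J_L$ components, incoming cones for $J_{\Lb}$), and the observation that the charge enters only through cubic terms carrying an extra factor of $\dn^2$, all match the paper's proof. You also correctly identify where a naive estimate loses: pairing the $L^\infty$ bound $|D_{Z_1}\phi|^2\les\dn\,r^{-5/2+\ep}u_+^{1/2-\ga_0+2\zeta(Z_1)}$ against the unweighted incoming flux $E[D_{Z_2}\phi](\Hb_v^{u_1})$ leaves a $v^{-1+2\ep}$ density whose $v$-integral produces a spurious $(u_2)_+^{2\ep}$ factor.

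However, your two remedies are not the ones the paper uses, and the second one is a genuine mis-step. The paper never integrates by parts in $\Lb$, and it certainly does not commute the equation with the non-Killing vector field $\Lb$ nor introduce $\Lb$-type multipliers; commuting $\Box_A-1$ with $D_{\Lb}$ would generate first-order terms with $1/r$ coefficients whose fluxes are not part of the bootstrap scheme, and verifying your claim that these ``produce only lower-order errors that are controlled'' would be a substantial sub-project, not a parenthetical remark. What actually closes the gap is already encoded in the bootstrap hypotheses: \eqref{eq:BT:energy} holds for \emph{every} truncation level $u'\le-R/2$, so for any $0<\beta<\ga_0-2\zeta(Y)$ a single integration by parts in $u$ along $\Hb_v$ gives
\begin{equation*}
\int_{\Hb_v^{u_1}}u_+^{\beta}\,|D_{\Lb}D_Y\phi|^2\les\dn\,(u_1)_+^{\beta-\ga_0+2\zeta(Y)}.
\end{equation*}
Since in the exterior $r\ge u_+$ and $r\approx v$, pulling out $u_+^{2\zeta(Z_1)}\le(u_1)_+^{2\zeta(Z_1)}$ reduces the pointwise weight to $u_+^{1+\ga_0+\ep}|D_{Z_1}\phi|^2\les\dn\,(u_1)_+^{2\zeta(Z_1)}\,r^{-1-\ep}u_+^{3\ep}$; taking $\beta=3\ep$ in the display above and then integrating $v^{-1-\ep}$ over $v\in[(u_1)_+,-u_2]$ gives $\les\dn^2(u_1)_+^{2\zeta(Z^2)+2\ep-\ga_0}$, with no $u_2$- or $v_*$-dependence. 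This is exactly the step the paper performs (the same $u_+$-weighted flux absorption is also what makes the $k=0$ bound $\iint r^{-1-\ep}u_+^{1+\ep}|D_{\Lb}\phi|^2\les\E_{0,\ga_0}(u_1)_+^{1-\ga_0}$ go through). A further small inaccuracy: the term you propose to treat by the Leibniz identity, $\Im(\phi\cdot\overline{D_{\Lb}D_{Z}^2\phi})$, is in fact harmless without it, since $u_+^{1+\ga_0+\ep}|\phi|^2\les\dn\,r^{-1-\ep}u_+^{-1+2\ep}$ and the monotone factor $u_+^{-1+2\ep}\le(u_1)_+^{-1+2\ep}$ pulls out of the $v$-integral directly; only the $k_1=k_2=1$ term needs the $u_+$-weighted flux refinement.
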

The rest of this subsection is devoted to proving the above two propositions. We first show that Proposition \ref{prop:est4J} implies Proposition \ref{prop:BT:imp:Max}. To obtain the energy estimates \eqref{eq:BT:imp:Max:energy}, we apply the energy identity \eqref{eq:EEst:id:out} to $\phi=0$ and $\G=\cL_Z^k \tilde{F}$.
 By the assumption on the initial data, we derive that
\begin{align*}
E[\cL_Z^k \tilde{F}](\H_{u_1}^{-u_2})+E[\cL_Z^k \tilde{F}](\Hb_{-u_2}^{u_1})\les \mathcal{E}_{k, \ga_0} (u_1)_+^{-\ga_0+2\zeta(Z^k)}+
\iint_{\mathcal{D}_{u_1}^{-u_2}} |(\cL_Z^k \tilde{F})_{0\ga} (\cL_Z^k J[\phi])^\ga |.
\end{align*}
The  error term can be bounded with the help of Cauchy-schwarz inequality,
\begin{align*}
|(\cL_Z^k \tilde{F})_{0\ga}& (\cL_Z^k J[\phi])^\ga|\les |(\cL_Z^k \tilde{F})_{L\ga} (\cL_Z^k J[\phi])^\ga|+|(\cL_Z^k \tilde{F})_{\Lb\ga} (\cL_Z^k J[\phi])^\ga|\\
&\les u_+^{-1-\ep}(|\a[\cL_Z^k \tilde{F}]|^2+|\rho[\cL_Z^k \tilde{F}]|^2)+u_+^{1+\ep}|\cL_Z^k J[\phi]_{\Lb}|^2+ r^{-1-\ep}|\ab[\cL_{Z}^k\tilde F] |^2+r^{1+\ep}|\cL_Z^k \J [\phi]|^2.
\end{align*}
 We then employ (\ref{eq:J:est:need:all}) to treat the nonlinear term involving $J[\phi]$, and apply Lemma \ref{1.6.4.18} to derive the estimate \eqref{eq:BT:imp:Max:energy}.

Next for the $r$-weighted energy estimates \eqref{eq:BT:imp:Max:pwe}, we apply the identity \eqref{pWescaout} to $\phi=0$, $\G=\cL_Z^k \tilde{F}$ and  $p=\ga_0<2$ with the help of the second identity in Lemma \ref{lem:commutator} and (\ref{1.6.5.18}). Then the right hand side can be bounded by the initial data and all the other terms possess positive signs except the nonlinear term $(\cL_Z^k\tilde{F})_{X \ga} (\cL_Z^k J[\phi])^\ga$. We therefore can derive that
 \begin{align*}
  & \iint_{\mathcal{D}_{u_1}^{- u_2}}r^{\ga_0-1} (|\a[\cL_Z^k\tilde F]|^2+|\rho[\cL_Z^k\tilde{F}]|^2+|\si[\cL_Z^k\tilde F]|^2) \\
&+\int_{\H_{u_1}^{-u_2}}r^{\ga_0}|\a[\cL_Z^k\tilde F]|^2 +\int_{\Hb_{-u_2}^{u_1} }r^{\ga_0}(|\rho[\cL_Z^k\tilde F]|^2+|\si[\cL_Z^k\tilde F]|^2)\\
&\les \mathcal{E}_{k, \ga_0} (u_1)_+^{2\zeta(Z^k)}+\iint_{\mathcal{D}_{u_1}^{-u_2}} |\cL_Z^k \tilde{F}_{X\ga}(\cL_Z^k J[\phi])^\ga |.
\end{align*}
For the last integral, with $J=J[\phi]$, we first can estimate that
\begin{align*}
|(\cL_Z^k\tilde{F})_{X \ga} (\cL_Z^k J)^\ga|&\les r^{\ga_0}(|\rho[\cL_Z^k \tilde F]||\cL_Z^k J_{L}|+|\a[\cL_Z^k \tilde F]||\cL_Z^k \J|)\\
&\les r^{\ga_0}u_+^{-1-\ep}|\a[\cL_Z^k \tilde F]|^2+r^{\ga_0}u_+^{1+\ep}|\cL_Z^k \J|^2+\ep_1 r^{\ga_0-1}|\rho[\cL_Z^k \tilde{F}]|^2+\ep_1^{-1}r^{\ga_0+1}|\cL_Z^k J_{L}|^2
\end{align*}
for all $\ep_1>0$. The integral of the first term can be absorbed by using Gronwall's inequality and the third term can be absorbed for sufficiently small $\ep_1$ which depends only on $\ga_0$ and the implicit universal constant. Once we have chosen $\ep_1$, the rest two terms involving the terms of $J$ can be bounded by using Proposition \ref{prop:est4J}. Next, we prove Proposition \ref{prop:est4J}.

\begin{proof}[Proof for Proposition \ref{prop:est4J}]
We first consider the case when $k=0$. In this case, we will show
\begin{equation}\label{1.15.1.18}
\iint_{\mathcal{D}_{u_1}^{-u_2}} r^{\ga_0}u_+^{1+\ep}| \J|^2+r^{\ga_0+1}| J_{L}|^2 +u_+^{1+\ga_0+\ep}|J_{\Lb}|^2\les \dn^2 (u_1)_+^{-1-\ga_0+\ep}.
\end{equation}
Note that  in view of (\ref{eq:decay:scal:pt:phi}) and (\ref{eq:EEst:decay:0}), we can derive
\begin{align*}
\iint_{\mathcal{D}_{u_1}^{-u_2}} &|\phi|^2(r^{\ga_0}u_+^{1+\ep}| \D \phi|^2+r^{\ga_0+1}| D_L \phi|^2 +u_+^{1+\ga_0+\ep}|D_{\Lb} \phi|^2)\\
&\les \dn (u_1)_+^{-2+\ep} \iint_{\mathcal{D}_{u_1}^{-u_2}} |\D \phi|^2+ |D_L \phi|^2 + r^{-1-\ep }u_+^{1+\ep}|D_{\Lb}\phi|^2 \\
&\les \dn\E_{0,\ga_0}  (u_1)_+^{-1-\ga_0+\ep}.
\end{align*}
In view of the definition of $J[\phi]$, this gives (\ref{1.15.1.18}).

Next, we consider the cases when $k=1,2$. By the definition of Lie derivatives, for any $X\in \Gamma$, we can show that
\begin{equation}\label{12.07.2.17}
\begin{split}
\cL_X J_{\mu}[\phi]&= X \Im(\phi\cdot \overline{D_\mu \phi})-\Im (\phi\cdot \overline{D_{[X, \pa_\mu]}\phi})\\
&=\Im(D_X\phi \cdot \overline{D_\mu \phi} )+\Im(\phi\cdot \overline{D_\mu D_X \phi})+\Im(\phi\cdot \overline{i F_{X \mu}}\phi)\\
&=\Im(D_X\phi \cdot \overline{D_\mu \phi} )+\Im(\phi\cdot \overline{D_\mu D_X \phi})- F_{X \mu}|\phi|^2.
\end{split}
\end{equation}
In particular we can bound that
\begin{align*}
|\cL_X J_{\mu}[\phi]|\les |D_X\phi||D_\mu \phi|+|\phi||D_\mu D_X\phi|+|F_{X \mu}||\phi|^2.
\end{align*}
For the second order derivative, we can show that
\begin{align*}
\cL_X \cL_Y J_\mu[\phi]&=X(\cL_Y J_\mu[\phi])-(\cL_Y J[\phi])([X, \pa_\mu])\\
&=X\left(\Im(D_Y\phi \cdot \overline{D_\mu \phi} )+\Im(\phi\cdot \overline{D_\mu D_Y \phi})- F_{Y \mu}|\phi|^2\right)\\
&-\Im(D_Y\phi \cdot \overline{D_{[X, \pa_\mu]} \phi} )-\Im(\phi\cdot \overline{D_{[X, \pa_\mu]} D_Y \phi})+ F_{Y [X, \pa_\mu]}|\phi|^2\\
&=\Im(D_X D_Y\phi\cdot \overline{D_\mu\phi})+\Im(D_Y\phi\cdot (\overline{ D_\mu D_X\phi+i F_{X\pa_\mu}\phi}))\\
&+\Im(D_X\phi\cdot \overline{D_\mu D_Y \phi})+\Im(\phi\cdot (\overline{D_\mu D_X D_Y \phi+iF_{X\mu} D_Y\phi}))\\
&-(\cL_X F)_{Y\mu }|\phi|^2-F_{[X, Y]\mu}|\phi|^2-F_{Y\mu} D_X|\phi|^2.
\end{align*}
Therefore we can estimate that
\begin{align*}
|\cL_X \cL_Y J_\mu[\phi]|&\les |D_X D_Y\phi||D_\mu\phi|+|D_Y\phi| |D_\mu D_X\phi|+|F_{X\mu}||\phi||D_Y\phi|
+|D_X\phi||D_{\mu} D_Y \phi|\\&
+|\phi||D_\mu D_X D_Y \phi|+|(\cL_X F)_{Y\mu }||\phi|^2+|F_{[X, Y]\mu}||\phi|^2+|F_{Y\mu}| |\phi||D_X\phi|.
\end{align*}
Let's only consider the second order derivatives as the one derivative case should be easier and the corresponding estimate can follow in a similar way. For the first term on the right hand side of the above inequality, we bound $|D_\mu\phi|$ by the pointwise estimates obtained in Proposition \ref{prop:decay:scal} and the other term $|D_XD_Y\phi|$ by the bound of  energy flux along the outgoing null hypersurface in (\ref{eq:BT:energy}). Indeed from Proposition \ref{prop:decay:scal}, we first can bound that
\begin{align*}
r^{\ga_0}u_+^{1+\ep}|\D\phi|^2+r^{\ga_0+1}|D_L\phi|^2 +u_+^{1+\ga_0+\ep}|D_{\Lb}\phi|^2\les \dn u_+^{-3+2\ep}.
\end{align*}
Therefore we can show that
\begin{align*}
\iint_{\mathcal{D}_{u_1}^{-u_2}} |D_XD_Y\phi|^2(r^{\ga_0}u_+^{1+\ep}|\D\phi|^2+r^{\ga_0+1}|D_L\phi|^2 +u_+^{1+\ga_0+\ep}|D_{\Lb}\phi|^2)\les \dn^2 (u_1)_+^{2\zeta(XY)-2-\ga_0+2\ep}.
\end{align*}
Next for $|D_X\phi||D_\mu D_Y\phi|$ or $|D_Y\phi||D_\mu D_X\phi|$, it suffices to consider one of them due to symmetry. From Proposition \ref{prop:decay:scal}, we have
\[
|D_X\phi|^2\les \dn r^{-\frac{5}{2}+\ep}u_+^{\f12 -\ga_0 +2\zeta(X)}.
\]
Therefore we can bound that
\begin{align*}
&\iint_{\mathcal{D}_{u_1}^{-u_2}} |D_X\phi|^2(r^{\ga_0}u_+^{1+\ep}|\D D_Y\phi|^2+r^{\ga_0+1}|D_L D_Y\phi|^2 +u_+^{1+\ga_0+\ep}|D_{\Lb}D_Y\phi|^2)\\
&\les \dn  (u_1)_+^{2\zeta(X)}\iint_{\mathcal{D}_{u_1}^{-u_2}} u_+^{-1+2\ep}|\D D_Y\phi|^2+r u_+^{-2+\ep}|D_L D_Y\phi|^2 + r^{-1-\ep }u_+^{3\ep}|D_{\Lb}D_Y\phi|^2 \\
&\les \dn^2  (u_1)_+^{2\zeta(XY)-\ga_0+2\ep}.
\end{align*}
Next we estimate $|\phi||D_\mu D_X D_Y\phi|$. For this term, we can bound $\phi$ by the $L^\infty$ norm in (\ref{eq:decay:scal:pt:phi}) and the other one by the bound in (\ref{eq:BT:energy}) for the energy fluxes through incoming and outgoing null hypersurfaces. More precisely we have
\begin{align*}
&\iint_{\mathcal{D}_{u_1}^{-u_2}} |\phi|^2(r^{\ga_0}u_+^{1+\ep}|\D D_X D_Y\phi|^2+r^{\ga_0+1}|D_L D_X D_Y\phi|^2 +u_+^{1+\ga_0+\ep}|D_{\Lb}D_XD_Y\phi|^2)\\
&\les \dn (u_1)_+^{-2+\ep} \iint_{\mathcal{D}_{u_1}^{-u_2}} |\D D_X D_Y\phi|^2+ |D_L D_X D_Y\phi|^2 + r^{-1-\ep }u_+^{1+\ep}|D_{\Lb}D_X D_Y\phi|^2 \\
&\les \dn^2  (u_1)_+^{2\zeta(XY)-1-\ga_0+\ep}.
\end{align*}
The remaining ones are cubic nonlinear terms. By symmetry, it suffices to consider $|F_{X \mu }||\phi||D_Y\phi|$ and $(|(\cL_X F)_{Y\mu}|+|F_{[X, Y]\mu}|)|\phi|^2$. In view of
(\ref{12.7.3.17}),  we can roughly bound such terms
as follows,
\begin{equation*}
|F_{X\mu}|^2 \les (\dn+q_0^2) u_+^{-2+2\zeta(X)},\quad |F_{[X, Y]\mu}|^2\les (\dn+q_0^2) u_+^{2\zeta(XY)-2}.
\end{equation*}

Therefore we have
\begin{align*}
\iint_{\mathcal{D}_{u_1}^{-u_2}}& r^{\ga_0+1+\ep}(|F_{X\mu}|^2|\phi|^2|D_Y\phi|^2+|F_{[X, Y]\mu}|^2|\phi|^4)\\
&\les \dn(\dn+q_0^2) (u_1)_+^{-4+\ep+2\zeta(X)} \iint_{\mathcal{D}_{u_1}^{-u_2}} |D_Y\phi|^2+u_+^{2\zeta(Y)}|\phi|^2 \\
&\les \dn^2(\dn+q_0^2)  (u_1)_+^{2\zeta(XY)-3-\ga_0+\ep}.
\end{align*}
For the last one $|(\cL_X F)_{Y\mu}| |\phi|^2$, we need the following facts
\begin{equation}\label{12.13.1.17}
|\Lie_X F-\Lie_X \tilde F|\les r^{\zeta(X)-2}|q_0|,\quad \quad |\G_{X\nu}|\les r^{\zeta(X)+1}|\G_{\mu\nu }|\mbox{ where } X\in \Ga.
\end{equation}
 Here $\G$ is a two form.
We can bound $\phi$ by the pointwise bound in (\ref{eq:decay:scal:pt:phi}) and $(\cL_X \tilde F)_{Y\mu}$ by the bound of energy flux in (\ref{eq:BT:energy}). We thus can derive that
\begin{align*}
&\iint_{\mathcal{D}_{u_1}^{-u_2}} |\phi|^4(r^{\ga_0}u_+^{1+\ep}|(\cL_X F)_{Y e_A}|^2+r^{\ga_0+1}|(\cL_X F)_{Y L}|^2 +u_+^{1+\ga_0+\ep}|(\cL_X F)_{Y \Lb}|^2)\\
&\les \dn^2  \iint_{\mathcal{D}_{u_1}^{-u_2}}  r^{-5+\ga_0+\ep}u_+^{-2\ga_0}|(\cL_X F)_{Y \mu}|^2 \\
&\les \dn^2  \iint_{\mathcal{D}_{u_1}^{-u_2}}  r^{-3+\ga_0+\ep}u_+^{-2\ga_0+2\zeta(Y)}|\cL_X F|^2 \\
&\les \dn^2(\dn+q_0^2)  (u_1)_+^{2\zeta(XY)-2-2\ga_0+\ep}.
\end{align*}
 For deriving the last inequality, we used (\ref{12.13.1.17}) to decompose $\Lie_X F$. With the help of the fact that $-3+\ga_0+\ep<-1$, which can be seen by our assumption of $\ga_0$, we then  estimated the component $\ab[\cL_X \tilde F]$  by using the energy flux through the incoming null hypersurfaces and other components of $\cL_X \tilde F$ by using energy flux on the outgoing null hypersurfaces.
Combining all the above estimates, we can derive estimate \eqref{eq:J:est:need:all}. This finished the proof for Proposition \ref{prop:est4J}.
\end{proof}

\subsection{ Decay of Energy  estimates for the scalar field}\label{Sec1.15}
We derive the energy decay estimates for the scalar field in this section under the bootstrap assumptions \eqref{eq:BT:energy}-\eqref{eq:BT:PWE:Max}. We show that
\begin{prop}
\label{prop:BT:imp:scal}
Under the bootstrap assumptions \eqref{eq:BT:energy}-\eqref{eq:BT:PWE:Max}, we have the energy flux decay for the scalar field
\begin{align}
\label{eq:BT:imp:energy:scal}
E[D_Z^k\phi](\H_{u_1}^{-u_2})+E[D_Z^k \phi](\Hb_{-u_2}^{u_1})\les (\mathcal{E}_{k, \ga_0}+\dn^2) (u_1)_+^{-\ga_0+2\zeta(Z^k)}
\end{align}
and the $r$-weighted energy flux decay estimate
\begin{align}
\label{eq:BT:imp:pwe:scal}
\int_{\H_{u_1}^{-u_2}}r |D_LD_Z^k\phi|^2+\int_{\Hb_{-u_2}^{u_1} }r(|\D D_Z^k\phi|^2+|D_Z^k\phi|^2) \les (\mathcal{E}_{k, \ga_0} +\dn^2) (u_1)_+^{1-\ga_0+2\zeta(Z^k)}
\end{align}
for all $-v_*\le u_2<u_1\le-\frac{R}{2}$, $k\leq 2$, $Z^k\in \Gamma^k$.
\end{prop}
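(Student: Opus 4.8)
The plan is to treat Proposition~\ref{prop:BT:imp:scal} as the scalar-field analogue of Proposition~\ref{prop:BT:imp:Max}: obtain the structure of the commutator source $h:=(\Box_A-1)D_Z^k\phi$, feed it into the energy identities \eqref{eq:EEst:id:out} and \eqref{pWescaout}, bound the resulting spacetime error integrals using the pointwise decay already proved in Propositions~\ref{prop:decay:scal} and \ref{prop:decay:pt:Max}, and close with Lemma~\ref{1.6.4.18}. The case $k=0$ is contained in Proposition~\ref{prop:Energyflux:decay:0} (recall $\E_{0,\ga_0}\le\E_{2,\ga_0}$), so assume $1\le k\le 2$. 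Since $\Box_A\phi=\phi$, Lemmas~\ref{lem:commutator}, \ref{lem:Est4commu:id:2} and \ref{cor1} show that $h$ is a finite linear combination of trilinear forms $Q(\cL_Z^{j}F,D_Z^{k-1-j}\phi,W)$ with $W\in\Gamma$, $j\le k-1$, together with (only when $k=2$) a cubic term $F_{W_1\mu}F^{\mu}_{\;W_2}\phi$, $W_1,W_2\in\Gamma$.

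For the energy flux bound \eqref{eq:BT:imp:energy:scal} I would apply \eqref{eq:EEst:id:out} with $\G=0$ and $\phi$ replaced by $D_Z^k\phi$, which gives
\begin{equation*}
E[D_Z^k\phi](\H_{u_1}^{-u_2})+E[D_Z^k\phi](\Hb_{-u_2}^{u_1})= E[D_Z^k\phi](\Si_0^{u_1,u_2})-2\!\iint_{\mathcal{D}_{u_1}^{-u_2}}\!\big(F_{0\mu}J[D_Z^k\phi]^\mu+\Re(h\cdot\overline{D_0 D_Z^k\phi})\big),
\end{equation*}
the initial term being $\les\E_{k,\ga_0}(u_1)_+^{-\ga_0+2\zeta(Z^k)}$ from the weighted definition of $\E_{k,\ga_0}$ (cf.\ the data bounds \eqref{12.22.1.17}). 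Writing $2D_0=D_L+D_\Lb$, I would split $F=\tilde F+q_0r^{-2}dt\wedge dr$ in the current term: the chargeless piece is treated by the pointwise decay of $\tilde F$ from Proposition~\ref{prop:decay:pt:Max} and Cauchy--Schwarz against the fluxes, while the charge piece contracts to a multiple of $q_0r^{-2}J_{\pa_r}[D_Z^k\phi]$, whose $D_L$-part is a good derivative and whose $D_\Lb$-part is traded, via $|u||D_\Lb f|\les r|D_Lf|+\sum_i|D_{\Om_{0i}}f|$ and $r\approx v$, for lower-order boost-commuted fluxes --- this is where one avoids any smallness of $|q_0|$. For $\iint\Re(h\cdot\overline{D_0D_Z^k\phi})$ I would apply Lemma~\ref{lem:Est4commu:1} to each $Q$: the delicate contributions are $r|F||D_LD_Z^{k-1}\phi|$ and, for $k=2$, $r|\cL_ZF||D_L\phi|$ paired with $|D_LD_Z^k\phi|+|D_\Lb D_Z^k\phi|$; by Cauchy--Schwarz with weights $r^{\mp(\f12+\ep)}$ these reduce to the $\H_u$, $\Hb_v$ fluxes of $D_Z^k\phi$ on one factor and, on the other, to the improved bounds $\|rD_LD_Z^{j}\phi\|_{L^2(\H_u)}^2\les\dn\,u_+^{-\ga_0+2\zeta(Z^j)}$ and the pointwise decay of $D_L\phi$ of Proposition~\ref{prop:decay:scal}, both of which descend from the identity $v|Lf|\les\sum_{\mu\nu}|D_{\Om_{\mu\nu}}f|+|u||Df|$. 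The remaining terms in $h$ (carrying $\a[F]$, an extra $u_+$, or no $D_L$) are lower order, and for $k=2$ the cubic term is absorbed by the null structure of Lemma~\ref{lem:Est4commu:2}, the rough decay \eqref{12.7.3.17} of $F$, and the $L^\infty$ bound \eqref{eq:decay:scal:pt:phi}. This produces an inequality of the form $f(\tau,\iota)+g(\tau,\iota)\les(\E_{k,\ga_0}+\dn^2)\tau^{-\ga_0+2\zeta(Z^k)}+\int_\tau^\iota f\,{\tau'}^{-a}\,d\tau'+\int_\tau^\iota{\iota'}^{-b}g\,d\iota'$ with $a,b>1$, where $f,g$ denote the outgoing and incoming fluxes as functions of $\tau=-u_1$, $\iota=-u_2$; Lemma~\ref{1.6.4.18} then yields \eqref{eq:BT:imp:energy:scal}.

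For the $r$-weighted estimate \eqref{eq:BT:imp:pwe:scal} I would use \eqref{pWescaout} with $\G=0$, $p=1$, $\chi=1$, $Y=\f12 r^{-1}|D_Z^k\phi|^2L$. The boundary terms on the left control $\int_\H r|D_LD_Z^k\phi|^2$ and $\int_{\Hb}r(|\D D_Z^k\phi|^2+|D_Z^k\phi|^2)$ up to lower order, the data term is $\les\E_{k,\ga_0}(u_1)_+^{1-\ga_0+2\zeta(Z^k)}$, and the only term with the wrong sign is $\f12\iint_{\mathcal{D}}|D_Z^k\phi|^2$; this I bound by slicing in $u$ and inserting the \emph{already established} \eqref{eq:BT:imp:energy:scal}, $\iint|D_Z^k\phi|^2\les\int E[D_Z^k\phi](\H_u)\,du\les(\E_{k,\ga_0}+\dn^2)(u_1)_+^{1-\ga_0+2\zeta(Z^k)}$, which converges because $\ga_0>1$. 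The new error integrals $\iint F_{X\mu}J[D_Z^k\phi]^\mu$ and $\iint\Re(h\cdot(\overline{D_XD_Z^k\phi}+\overline{D_Z^k\phi}))$ with $X=rL$ carry one more power of $r$ than before and are handled identically, now exploiting the stronger $u_+$-decay of the factors; Lemma~\ref{1.6.4.18} then closes \eqref{eq:BT:imp:pwe:scal}, and choosing $\dn$ small improves \eqref{eq:BT:energy} and \eqref{eq:BT:PWE:scal}.

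The main obstacle I expect is the top-order case $k=2$ of the weighted estimate: one must extract the sharp power $(u_1)_+^{-\ga_0+2\zeta(Z^2)}$ from the leading contributions $Q(F,D_Y\phi,X)$ and $Q(\cL_XF,\phi,Y)$ without losing derivatives --- in particular, controlling $\|rD_LD_Z\phi\|_{L^2(\H_u)}$ and the pointwise decay of $D_L\phi$ at exactly the right rate --- and must keep the charge-related contributions separated so the constants do not degenerate as $|q_0|\to\infty$; this is presumably the content of the auxiliary Lemma~\ref{prop:Q:need:r:all} and Proposition~\ref{1.27.3.18} announced in the introduction.
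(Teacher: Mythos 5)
Your overall plan matches the paper's architecture: feed the commutator structure (Lemmas~\ref{lem:commutator}--\ref{cor1}, Lemma~\ref{lem:Est4commu:1}--\ref{lem:Est4commu:2}) into the energy identities \eqref{eq:EEst:id:out} and \eqref{pWescaout}, estimate the bulk errors using the pointwise decay of Propositions~\ref{prop:decay:scal}--\ref{prop:decay:pt:Max}, use the improved flux bound $\|rD_LD_Z^j\phi\|_{L^2(\H_u)}^2\les\dn u_+^{-\ga_0+2\zeta(Z^j)}$ derived from the boost identity, close with the Gronwall Lemma~\ref{1.6.4.18}, and for the $r$-weighted estimate control $\iint|D_Z^k\phi|^2$ by integrating the just-proved \eqref{eq:BT:imp:energy:scal} in $u$. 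That is all correct and consistent with the paper.

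There is, however, a concrete flaw in your treatment of the charge contribution to $\iint F_{0\mu}J[D_Z^k\phi]^\mu$. You propose to write the charge piece as $q_0 r^{-2}J_{\pa_r}[D_Z^k\phi]$ and to trade the bad factor $D_{\Lb}D_Z^k\phi$ inside $J_{\Lb}$ via $|u||D_{\Lb}f|\les r|D_Lf|+\sum_i|D_{\Om_{0i}}f|$. Applied with $f=D_Z^k\phi$ this produces $D_{\Om_{0i}}D_Z^k\phi$, a derivative of order $k+1$; at the top order $k=2$ this is a third derivative, whose flux is controlled neither by the bootstrap assumptions \eqref{eq:BT:energy}--\eqref{eq:BT:PWE:Max} (which stop at $k\le 2$) nor by anything you have established. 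The step fails. The paper does not need any such trading here: it simply bounds $|F_{0\mu}J^\mu|\les r^{-1-\ep}|DD_Z^k\phi|^2+r^{1+\ep}|F_{0\mu}|^2|D_Z^k\phi|^2$, then uses the rough pointwise bound $|F_{0\mu}|^2\les(\dn+q_0^2)r^{-2}u_+^{-2}$ from \eqref{12.7.3.17} together with $r\ge u_+$, obtaining a Gronwall-integrable factor $(\dn+q_0^2)u_+^{-3+\ep}$. The $q_0$-dependence then enters only through the (finite, $q_0$-dependent) Gronwall constant --- no smallness of $q_0$ is needed, and no derivatives are lost.

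Second, you correctly sense that the delicate charge handling lives elsewhere, namely in $\iint r^{1+\ep}u_+{}^{1+\ep}|(\Box_A-1)D_Z^k\phi|^2$, and you defer it to ``presumably the content of Lemma~\ref{prop:Q:need:r:all} and Proposition~\ref{1.27.3.18}.'' That is the substantive part of the argument, and it is genuinely nontrivial: the residual terms in Lemma~\ref{prop:Q:need:r:all} carry a factor $q_0^2$ multiplying $\iint u_+^{\cdots}|DD_Z\phi|^2$, which is of the same order as the target flux $E[D_Z^2\phi]$ and hence cannot be absorbed by smallness. The paper resolves the circularity by a signature-ordered induction: first prove \eqref{eq:BT:imp:energy:scal} for $Z^2=\pa\pa$ (where the residual $q_0^2$-term only sees $DD_\pa\phi$, already controlled at $k=1$), then for $Z^2=\pa Z$ with $Z=\Omega_{\mu\nu}$ (feeding in the $\pa\pa$ bound, while the $|DD_\Omega\phi|^2$ piece is absorbed by Gronwall thanks to the $u_+^{-2+2\ep}$ weight coming from $\zeta(\pa\Omega)=-1$), and finally the general $Z^2$ using Proposition~\ref{1.27.3.18}, now established by direct integration of the already-proved fluxes. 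Without this ordering the argument does not close, so the auxiliary results are not peripheral but the core of the proof; your sketch should reconstruct them rather than assume them.
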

To show the above proposition, we need  the following estimates on the commutators.
\begin{lem}
\label{prop:Q:need:r:all}
For  $1\le k\leq 2$, let $Z^k=Z_1\cdots Z_k \in\Gamma^k$. Under the bootstrap assumptions \eqref{eq:BT:energy}-\eqref{eq:BT:PWE:Max},  when $k=2$, we can bound
\begin{equation} \label{eq:Q:need:r:all}
\begin{split}
\iint_{\mathcal{D}_{u_1}^{-u_2}} r^{1+\ep}u_+^{1+\ep}|(\Box_A-1)D_Z^k\phi|^2 &\les (\E_{0,\ga_0}+ \dn^2) (u_1)_+^{2\zeta(Z^k)+1-\ga_0}+q_0^2\iint_{\mathcal{D}_{u_1}^{-u_2}}{u_+}^{2\zeta(Z^k)+2\ep} |D \phi|^2\\
&+q_0^2 \iint_{\mathcal{D}_{u_1}^{-u_2}} {u_+}^{2(\zeta(Z^k)+2\ep}\sum_{Z\in\{ Z_1, Z_2\}}u_+^{-2\zeta(Z)} |D D_Z \phi|^2,
\end{split}
\end{equation}
where $-v_*\le u_2<u_1\le -\f12 R$ in the above estimate.
When $k=1$, the same estimate holds with the terms in the second line vanished.
\end{lem}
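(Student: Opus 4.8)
The plan is to reduce $(\Box_A-1)D_Z^k\phi$ to an explicit finite sum of null-structured quadratic and cubic expressions in $(F,\phi)$, and then to estimate each piece by pairing a pointwise-decaying factor against a factor controlled by the bootstrap fluxes. For the reduction: since $\phi$ solves $\Box_A\phi=\phi$, Lemma \ref{lem:commutator} gives $(\Box_A-1)D_{Z_1}\phi=[\Box_A-1,D_{Z_1}]\phi=Q(F,\phi,Z_1)$ when $k=1$, while for $k=2$ Lemma \ref{cor1} combined with Lemma \ref{lem:Est4commu:id:2} and $(\Box_A-1)\phi=0$ yields
\[
(\Box_A-1)D_{Z_1}D_{Z_2}\phi=Q(F,D_{Z_2}\phi,Z_1)+Q(F,D_{Z_1}\phi,Z_2)+Q(\cL_{Z_1}F,\phi,Z_2)+Q(F,\phi,[Z_1,Z_2])-2F_{Z_1\mu}F^{\mu}_{\;Z_2}\phi ,
\]
and $[Z_1,Z_2]$ is a linear combination of elements of $\Ga$ of signature $\zeta(Z_1)+\zeta(Z_2)$. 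Thus in both cases the error is a sum of trilinear forms $Q(\cL_{Z'}^{j}F,D_{Z'}^{k-1-j}\phi,W)$ with $j\le k-1$, $W\in\Ga$, plus, only when $k=2$, the cubic term $F_{Z_1\mu}F^{\mu}_{\;Z_2}\phi$; in particular for $k=1$ the second-line terms of \eqref{eq:Q:need:r:all} are simply absent. Each $Q$-term I would estimate via the null-form bound \eqref{eq:Est4commu:1} and the cubic term via \eqref{eq:Est4commu:2}, the key structural gain being that the slowly-decaying component $\ab[F]$, which decays only like $r^{-1}$ by \eqref{eq:decay:Max:ab}, is always accompanied by $D_L\phi$ (or by a weight $u_+$ rather than $r$), never by $D_{\Lb}\phi$.

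Next I would split $F=\tilde F+q_0 r^{-2}\chi_{\{t+R\le r\}}\,dt\wedge dr$, and correspondingly $\cL_{Z_1}F$ using \eqref{12.13.1.17}; recall $\delta(q_0 r^{-2}dt\wedge dr)=0$ in the exterior and that this Coulomb tail contributes only a $\rho$-component of size $|q_0|r^{-2}$. For the chargeless contributions, in each trilinear term I would put the pointwise decay of Proposition \ref{prop:decay:pt:Max} (namely $|\a|,|\si|,|\tilde\rho|\les\dn^{1/2}r^{-1-\ga_0/2}u_+^{-1/2}$ and $|\ab|\les\dn^{1/2}r^{-1}u_+^{-(\ga_0+1)/2}$) on the Maxwell factor when it carries the fewest derivatives, or the pointwise decay of Proposition \ref{prop:decay:scal} on the scalar factor otherwise, and bound the surviving factor — one of $\|r^{1/2}D_LD_Z^{l}\phi\|_{L^2(\H_u)}$, $\|r^{1/2}D_Z^{l}\phi\|_{L^2(\Hb_v)}$ with $l\le 2$, the standard fluxes \eqref{eq:BT:energy}, or the $r^{\ga_0}$-weighted Maxwell fluxes \eqref{eq:BT:PWE:Max} — by the bootstrap assumptions, with the order-zero fluxes improved by Proposition \ref{prop:Energyflux:decay:0}, which is what produces the $\E_{0,\ga_0}$ on the right of \eqref{eq:Q:need:r:all}. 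Performing $\iint_{\mathcal{D}_{u_1}^{-u_2}}$ as $\int du\int_{\H_u}$ for the outgoing contractions and $\int dv\int_{\Hb_v}$ for the incoming ones, and using $u_+\les r$ in the exterior together with $\ep\le\frac{1}{10}(\ga_0-1)$ to make the resulting $r$- and $u_+$-integrals converge, bounds the chargeless part of $\iint r^{1+\ep}u_+^{1+\ep}|(\Box_A-1)D_Z^k\phi|^2$ by $(\E_{0,\ga_0}+\dn^2)(u_1)_+^{2\zeta(Z^k)+1-\ga_0}$; the cubic $\tilde F\tilde F\phi$ term is treated identically via \eqref{eq:Est4commu:2}, \eqref{eq:decay:Max:asirho} and the fast decay \eqref{eq:decay:scal:pt:phi} of $\phi$, and is manifestly of size $\dn^2$.

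It then remains to track the terms in which the Coulomb tail of $F$, or of $\cL_{Z_1}F$, is selected. Using \eqref{eq:Est4commu:1} and \eqref{eq:Est4commu:2} these collapse to, schematically, $|q_0|r^{\zeta(W)-1}|DD_{Z'}^{l}\phi|$ with $l\le 1$ (from the charge part of $Q(F,D_{Z'}\phi,W)$), $|q_0|r^{\zeta(Z^k)-1}|D\phi|$ (from the charge part of $Q(\cL_{Z_1}F,\phi,Z_2)$, using \eqref{12.13.1.17}), and $q_0^2u_+^{\zeta(Z^k)}r^{-2}|\phi|$ (the pure-charge part of the cubic term); squaring, multiplying by $r^{1+\ep}u_+^{1+\ep}$, integrating over $\mathcal{D}_{u_1}^{-u_2}$, and using $u_+\les r$, and for the last one the pointwise bound \eqref{eq:decay:scal:pt:phi}, reproduces precisely the two charge terms on the right of \eqref{eq:Q:need:r:all}, the cubic charge term being subordinate to $\dn^2$ times a negative power of $(u_1)_+$. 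The main obstacle is to run this last step while keeping the null structure of $Q$ and of $F_{Y\mu}F^{\mu}_{\;X}$ sharp enough that $\ab[\tilde F]$ never meets $D_{\Lb}\phi$, and simultaneously separating the long-range charge part cleanly, so that the possibly large charge contributions land exactly in the two designated terms on the right rather than polluting the $\dn^2$ part; a secondary technical point for $k=2$ is that the $\ab$-component of $\cL_{Z_1}\tilde F$ appearing in $Q(\cL_{Z_1}F,\phi,Z_2)$ must be controlled by its flux on the incoming cones $\Hb_v$, exactly as in the proof of Proposition \ref{prop:est4J}, and not on $\H_u$.
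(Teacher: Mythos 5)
Your proposal follows essentially the same route as the paper: the same reduction of $(\Box_A-1)D_Z^k\phi$ via Lemmas \ref{lem:commutator}, \ref{lem:Est4commu:id:2}, \ref{cor1}; the same use of the null-structure bounds \eqref{eq:Est4commu:1}, \eqref{eq:Est4commu:2} paired with the pointwise decay of Propositions \ref{prop:decay:scal}, \ref{prop:decay:pt:Max} on the lower-order factor and bootstrap fluxes on the higher-order one; and the same separation of the Coulomb tail via \eqref{12.13.1.17} to isolate the $q_0^2$-terms, with $\ab[\cL_Z\tilde F]$ controlled on the incoming cones. The only cosmetic difference is that you decompose $F$ into chargeless and Coulomb parts before applying \eqref{eq:Est4commu:1}, whereas the paper applies \eqref{eq:Est4commu:1} to the full field and then splits; the bookkeeping lands in the same place.
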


We can actually prove the improved estimate
\begin{prop}
\label{1.27.3.18}
 Under the bootstrap assumptions \eqref{eq:BT:energy}-\eqref{eq:BT:PWE:Max},  we have for $Z^k\in \Ga^k$ with $k\le 2$ \begin{footnote}{ Note that when $k=0$,  as $\phi$ verifies the massive MKG equation, we have $\Box_A\phi-\phi=0$. Thus (\ref{1.22.1.17})  automatically holds.}\end{footnote}
 \begin{equation}\label{1.22.1.17}
\iint_{\mathcal{D}_{u_1}^{-u_2}} r^{1+\ep}u_+^{1+\ep}|(\Box_A-1)D_Z^k\phi|^2 \les (\E_{k,\ga_0}+ \dn^2)(u_1)_+^{2\zeta(Z^k)+1-\ga_0},
\end{equation}
where $-v_*\le u_2<u_1\le -\f12 R$.
\end{prop}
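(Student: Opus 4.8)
The plan is to establish \eqref{1.22.1.17} by induction on $k\in\{0,1,2\}$, interleaved with the energy estimates of Proposition \ref{prop:BT:imp:scal}: the order of the argument is \eqref{1.22.1.17} at $k=0$, then Proposition \ref{prop:BT:imp:scal} at $k=0$ (which is just Proposition \ref{prop:Energyflux:decay:0}), then \eqref{1.22.1.17} at $k=1$, Proposition \ref{prop:BT:imp:scal} at $k=1$, and finally \eqref{1.22.1.17} at $k=2$ together with Proposition \ref{prop:BT:imp:scal} at $k=2$; thus when proving \eqref{1.22.1.17} at order $k$ we may use the improved energy bounds \eqref{eq:BT:imp:energy:scal}--\eqref{eq:BT:imp:pwe:scal} at all orders $<k$. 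The base case $k=0$ is immediate, since $(\Box_A-1)\phi=0$. For $1\le k\le 2$ we invoke Lemma \ref{prop:Q:need:r:all}, which already extracts the ``chargeless'' contribution $(\E_{0,\ga_0}+\dn^2)(u_1)_+^{2\zeta(Z^k)+1-\ga_0}\le(\E_{k,\ga_0}+\dn^2)(u_1)_+^{2\zeta(Z^k)+1-\ga_0}$; it therefore suffices to bound the residual charge integrals on the right of \eqref{eq:Q:need:r:all}, namely
\[
q_0^2\iint_{\mathcal{D}_{u_1}^{-u_2}}u_+^{2\zeta(Z^k)+2\ep}|D\phi|^2
\]
for $k=1$, together with $q_0^2\iint_{\mathcal{D}_{u_1}^{-u_2}}u_+^{2\zeta(Z^k)+4\ep}\sum_{Z\in\{Z_1,Z_2\}}u_+^{-2\zeta(Z)}|DD_Z\phi|^2$ when $k=2$. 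Since the implicit constant is allowed to depend on $|q_0|$, the factor $q_0^2$ is harmless, and the task is reduced to controlling $\iint u_+^{(\cdot)}|D\psi|^2$ with $\psi$ a covariant derivative of $\phi$ of order $\le k-1$.

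For the order-zero integral I would decompose $|D\phi|^2\approx|D_L\phi|^2+|\D\phi|^2+|D_{\Lb}\phi|^2$: the first two components (with $|\phi|^2$) are controlled by $E[\phi,\tilde F](\H_u)$ and the last by $E[\phi,\tilde F](\Hb_v)$, both of which by Proposition \ref{prop:Energyflux:decay:0} are $\les u_+^{-\ga_0}\E_{0,\ga_0}$ --- with $\E_{0,\ga_0}$, not $\dn$. Slicing $\mathcal{D}_{u_1}^{-u_2}$ by $\H_u$ and integrating in $u$ handles the good components; for $D_{\Lb}\phi$ one slices by $\Hb_v$ and uses that $E[\phi,\tilde F](\Hb_v^{u_1})\les(u_1)_+^{-\ga_0}\E_{0,\ga_0}$ is uniform in $v$, together with the weighted flux \eqref{eq:EEst:decay:0:p1} and $u_+\les r$ in the exterior region to make the $v$-integral converge; this gives $\les\E_{0,\ga_0}(u_1)_+^{2\zeta(Z^k)+1-\ga_0}$. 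For the $k=2$ integral the identical argument with $\phi$ replaced by $D_{Z_i}\phi$ works: $D_LD_{Z_i}\phi$ and $\D D_{Z_i}\phi$ are controlled on $\H_u$, and $D_{\Lb}D_{Z_i}\phi$ on $\Hb_v$, by $E[D_{Z_i}\phi](\H_u)$ and $E[D_{Z_i}\phi](\Hb_v)$, which by the inductive hypothesis (Proposition \ref{prop:BT:imp:scal} at order $1$) satisfy the improved bound $\les(\E_{1,\ga_0}+\dn^2)u_+^{-\ga_0+2\zeta(Z_i)}$; integrating, using $2\zeta(Z_1)+2\zeta(Z_2)=2\zeta(Z^k)$, the convention $\ep\le\tfrac{1}{10}(\ga_0-1)$, and $\E_{1,\ga_0}\le\E_{k,\ga_0}$, yields $\les(\E_{k,\ga_0}+\dn^2)(u_1)_+^{2\zeta(Z^k)+1-\ga_0}$. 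Adding the two contributions proves \eqref{1.22.1.17}.

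The delicate point --- and the reason no smallness of $|q_0|$ is needed --- is that one cannot tolerate any error term whose coefficient is $q_0^2\dn$ (linear in $\dn$), since it would only close the bootstrap when $q_0^2<1$; this is exactly what the induction on $k$ circumvents, the $\dn$-dependence of the charge errors at step $k$ entering only through derivatives of $\phi$ of order strictly below $k$, for which the quadratically small bounds $\E_{<k,\ga_0}+\dn^2$ are already in hand, while the order-zero piece is controlled entirely by $\E_{0,\ga_0}$ through Proposition \ref{prop:Energyflux:decay:0}. Structurally (this is how the charge terms in Lemma \ref{prop:Q:need:r:all} arise) one splits $F=\tilde F+F_c$ with $F_c=q_0r^{-2}\chi_{\{t+R\le r\}}\,dt\wedge dr$ in the commutator decomposition of $(\Box_A-1)D_Z^k\phi$: the field $F_c$ is stationary and explicit, annihilated by the rotations, has only a $\rho$-component with $|\rho[F_c]|=|q_0|r^{-2}$ and $|\cL_Z^j F_c|\les|q_0|r^{\zeta(Z^j)-2}$, so by Lemma \ref{lem:Est4commu:1} the forms $Q(\cL_Z^jF_c,\cdot,\cdot)$ carry no ``bad'' $\a\cdot D_{\Lb}\phi$ pairing and each surviving piece pairs the Coulomb decay with either a $\Lb$-derivative --- whose strong exterior decay is given by Proposition \ref{prop:decay:scal} --- or an $L$-/angular derivative or the scalar field itself, absorbed by the $r$-weighted fluxes \eqref{eq:EEst:decay:0:p1}, \eqref{eq:BT:PWE:scal}; the genuinely cubic charge pieces $F_c\!\cdot\!F_c\!\cdot\!\phi$ and $F_c\!\cdot\!\tilde F\!\cdot\!\phi$ are even more benign, contributing $q_0^4\E_{0,\ga_0}$ and $q_0^2\dn^2$. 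The only real bookkeeping obstacle is arranging the $r$- and $u_+$-weights so that all $v$-integrations converge, which is precisely where the restriction to the exterior region (and the freedom in the small parameter $\ep$) is used.
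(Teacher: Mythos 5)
Your overall framing -- invoke Lemma \ref{prop:Q:need:r:all}, note that the uncharged piece is already handled, and reduce to bounding the residual charge integrals by interleaving with Proposition \ref{prop:BT:imp:scal} -- is the right skeleton, and your closing discussion of the structure of the charge terms (Coulomb part $F_c$, null structure, why no smallness of $q_0$ is needed) is sound. However, the specific mechanism you propose for controlling the charge integrals does not close, and the gap is precisely at the step the paper works hardest to get right.

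There are two concrete problems. First, to bound $\iint u_+^{a}|DD_Z\phi|^2$ you decompose $D = (D_L,\D,D_{\Lb})$, handle $D_L,\D$ with $E[D_Z\phi](\H_u)$ and $D_{\Lb}$ with $E[D_Z\phi](\Hb_v)$. The $\Hb_v$-slicing simply diverges: $E[D_Z\phi](\Hb_v^{u_1})$ is uniform in $v$, and the weight $u_+^{a}$ furnishes no decay along $\Hb_v$ (on $\Hb_v^{u_1}$, $u_+$ is bounded \emph{below} by $(u_1)_+$, so for $a<0$ one can only extract the constant $(u_1)_+^{a}$); the invocation of \eqref{eq:EEst:decay:0:p1} cannot help because that estimate carries no $r$-weight on $|D_{\Lb}\phi|^2$ along $\Hb_v$. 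Second, even the $\H_u$-slices of the $D_L,\D$ parts do not give the stated bound: with $E[D_{Z_i}\phi](\H_u)\lesssim(\E_{1,\ga_0}+\dn^2)u_+^{-\ga_0+2\zeta(Z_i)}$ one obtains $\iint u_+^{2\zeta(Z^k)+2\ep-2\zeta(Z_i)}(|D_LD_{Z_i}\phi|^2+|\D D_{Z_i}\phi|^2)\lesssim(\E_{1,\ga_0}+\dn^2)\int u_+^{2\zeta(Z^k)+2\ep-\ga_0}\,du\lesssim(\E_{1,\ga_0}+\dn^2)(u_1)_+^{2\zeta(Z^k)+1-\ga_0+2\ep}$, which overshoots \eqref{1.22.1.17} by $(u_1)_+^{2\ep}$; the convention $\ep\le\tfrac{1}{10}(\ga_0-1)$ makes the integral converge but does not remove this loss, and feeding it into \eqref{eq:BT:imp:pwe:scal} would break the bootstrap \eqref{eq:BT:PWE:scal} since $(u_1)_+$ is unbounded.

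The fix used in the paper -- and the reason the order of operations is subtler than ``\eqref{1.22.1.17} at level $k$, then Proposition \ref{prop:BT:imp:scal} at level $k$'' -- is to estimate $\int_{\H_u}|DD_Z\phi|^2$ via the $|f|^2$-term of the \emph{second-order} flux, $\int_{\H_u}|DD_Z\phi|^2\le E[D_{\pa}D_Z\phi](\H_u)$, after first establishing \eqref{eq:BT:imp:energy:scal} for the special second-order words $Z^2=\pa Z$ (i.e., those containing at least one $\pa$). Then $E[D_{\pa}D_Z\phi](\H_u)\lesssim(\E_{2,\ga_0}+\dn^2)u_+^{-\ga_0+2\zeta(\pa Z)}$, the extra $u_+^{-2}$ from $\zeta(\pa)=-1$ gives $\int u_+^{2\zeta(Z^k)+2\ep-\ga_0-2}\,du\lesssim(u_1)_+^{2\zeta(Z^k)+2\ep-\ga_0-1}\le(u_1)_+^{2\zeta(Z^k)+1-\ga_0}$, and the single $\H_u$-foliation also controls the $D_{\Lb}D_Z\phi$ part with no need to slice by $\Hb_v$ at all. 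The same device is used already at $k=1$: one first proves \eqref{1.27.1.18} (the energy for $Z=\pa$) by Gronwall, and only then feeds $E[D\phi](\H_u)\lesssim(\E_{1,\ga_0}+\dn^2)u_+^{-\ga_0-2}$ into the $q_0^2\iint u_+^{2\zeta(Z)+2\ep}|D\phi|^2$ term. In short, the charge integrals can only be closed with the level-$k$ flux decay for the $\pa$-leading words, not with the level-$(k-1)$ flux decay alone; this is what your induction by total order misses, and it is where the paper's proof of \eqref{1.22.1.17} departs from the scheme you outlined.
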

 Proposition \ref{1.27.3.18} has to be proved together with (\ref{eq:BT:imp:energy:scal}) in Proposition \ref{prop:BT:imp:scal} by using  Lemma \ref{prop:Q:need:r:all}.  We now prove Lemma \ref{prop:Q:need:r:all} first, then use it to prove Proposition \ref{1.27.3.18} and Proposition \ref{prop:BT:imp:scal}.
\begin{proof}[Proof of Lemma \ref{prop:Q:need:r:all}] We start with considering  $(\Box_A-1)D_Z^k\phi$.
 When $k=1$, in view of Lemma \ref{lem:commutator} and the definition in (\ref{12.9.1.17}), we have
\[
(\Box_A-1)D_Z\phi=Q(F, \phi, Z).
\]
 When $k=2$, let $Z^2=XY$ with $X$, $Y\in \Gamma$. Lemma \ref{lem:Est4commu:id:2} and Lemma \ref{cor1} imply that
 \begin{equation}\label{12.7.6.17}
 \begin{split}
 (\Box_A-1)D_X D_Y\phi&=Q(F, D_X\phi, Y)+Q(F, D_Y\phi, X)+Q(F, \phi, \Lie_X Y)\\
 &+Q(\cL_X F, \phi, Y)-2F_{X\mu}F^{\mu}_{\, \, Y}\phi.
 \end{split}
 \end{equation}
Note that the commutators consist of two parts: one has the same structure as $Q(F, \phi, Z)$, which is  quadratic in $F$ and $D\phi$ and will be referred to as the quadratic part; the other one is a set of the cubic terms. Let's first estimate the cubic terms for which we can use Lemma \ref{lem:Est4commu:2}. Indeed we can show that
\begin{align}
&\iint_{\mathcal{D}_{u_1}^{-u_2}} r^{1+\ep}u_+^{1+\ep}|F_{X\mu}F^{\mu}_{\, \, Y}|^2|\phi|^2 \nn\\
&\les \iint_{\mathcal{D}_{u_1}^{-u_2}} r^{1+\ep}u_+^{1+\ep+2\zeta(XY)}\big(u_+^{2}|\ab|^2+ r^{2}(|\si|^2+|\a|^2+|\rho|^2+|\a||\ab|)\big)^2|\phi|^2 \nn\\
&\les \iint_{\mathcal{D}_{u_1}^{-u_2}} (\dn+q_0^2)^2 r^{-\ga_0+1+\ep}u_+^{-3+\ga_0+\ep+2\zeta(XY)}|\phi|^2 \nn\\
&\les (\dn+q_0^2)^2\E_{0,\ga_0} (u_1)_+^{2\zeta(XY)-1+2\ep -\ga_0},\label{12.7.5.17}
\end{align}
where we also employed Proposition \ref{prop:decay:pt:Max} and (\ref{eq:EEst:decay:0}).

Next we will control the main part of the commutator, which takes the quadratic form $Q(F, f, X)$.
 We first apply Lemma \ref{lem:Est4commu:1} to scalar field $D_{Z}^{l}\phi$ with  $\G=F$ for all $l\leq k-1$.
 Denote $\phi_{,l}=D_Z^l\phi$ and  $Z^1\in \Ga$.  We can employ Proposition \ref{prop:decay:scal}, Proposition \ref{prop:decay:pt:Max}, (\ref{12.7.3.17}) and the fact that $\dn\le 1$ to  bound that
\begin{align*}
|Q(F, \phi_{,l}, X)|^2&\les r^{2\zeta(X)+2}(|\a|^2|D \phi_{,l}|^2+|\ud F|^2|D_L \phi_{,l}|^2+|\si|^2|\D \phi_{,l}|^2)\\
&+u_+^{2\zeta(X)+2}(|\rho|^2|D_{\Lb}\phi_{,l}|^2
+|\ab|^2|\D \phi_{,l}|^2)\\
&+\left(u_+^{2\zeta(X)+2}|J_{\Lb}[\phi_{,l}]|^2+r^{2\zeta(X)+2}(|J_L [\phi_{,l}]|^2 +|\J [\phi_{,l}]|^2)+r^{2\zeta(X)}| F|^2\right)|\phi_{,l}|^2\\
&\les  u_+^{2\zeta(X)}r^{-1-\ep}\left((q_0^2+\dn)u_+^{-1+\ep}|D\phi_{,l}|^2+\dn(r^{1+\ep}u_+^{-2}|D_L \phi_{,l}|^2+u_+^{-3+\ep}|\phi_{,l}|^2)\right)\\
&+q_0^2r^{2\zeta(X)-2}|D_L \phi_{,l}|^2 
\end{align*}
where $\ud F$ represents all the components of $F$ except $\a[F]$. 

Now from the bootstrap assumption (\ref{eq:BT:energy}) and the fact that $l\leq k-1$, we can show that
\begin{align*}
\int_{\H_u^{-u_2}}|D D_Z^l\phi|^2  \les \dn u_+^{-\ga_0+2\zeta( Z^l)+2\zeta(\pa)}=\dn u_+^{-\ga_0-2+2\zeta( Z^l)}.
\end{align*}
By making use of the Lorentz boost, we have the improved energy flux decay
\begin{align*}
\int_{\H_u^{-u_2}}r^2|D_L D_Z^l\phi|^2 &\les \int_{\H_u^{-u_2}}(|D_{\Om_{0j}} D_Z^l\phi|^2+|D_{\Om_{ij}}D_Z^l\phi|^2 +u_+^2|D D_Z^l\phi|^2) \\
&\les \dn u_+^{-\ga_0+2\zeta(\Om Z^l)}=\dn u_+^{-\ga_0+2\zeta(Z^l)}.
\end{align*}
Therefore by using the above estimate and (\ref{eq:BT:energy}) we can obtain
\begin{equation}\label{12.7.4.17}
\begin{split}
\iint_{\mathcal{D}_{u_1}^{-u_2}} r^{1+\ep}u_+^{1+\ep}|Q(F, D_Z^{l}\phi, X)|^2& \les \dn^2(u_1)_+^{2\zeta(X Z^l)-1-\ga_0+2\ep}\\
&+q_0^2  \iint_{\mathcal{D}_{u_1}^{-u_2}} u_+^{2\zeta(XZ^l)+2\ep-2\zeta(Z^l)} |D D_Z^l \phi|^2
\end{split}
\end{equation}
for  $l\leq k-1$ and $k\le 2$. This in particular implies the $k=1$ case in  Lemma \ref{prop:Q:need:r:all} if we take $l=0$.

We will estimate the term $Q(\cL_Y F, \phi, X)$ by applying Lemma \ref{lem:Est4commu:1} to $\cL_Y F$ and the scalar field $\phi$. To treat the term of $\delta (\cL_Y F)$, we can use the  second identity in Lemma \ref{lem:commutator} and (\ref{EQMKG}).
Also by using Proposition \ref{prop:decay:scal} and (\ref{12.13.1.17}), we then can bound that
\begin{align}
|Q(\cL_Y F, \phi, X)|^2 &\les r^{2\zeta(X)+2}(|\a[\cL_Y F]|^2 |D \phi|^2+|\ud{\cL_Y F}|^2|D_L \phi|^2+|\si[\cL_Y F]|^2|\D \phi|^2)\nn\\
&+u_+^{2\zeta(X)+2}(|\rho[\cL_Y F]|^2|D_{\Lb}\phi|^2
+|\ab[\cL_{Y}F]|^2|\D \phi|^2)\nn\\
&+\left(u_+^{2\zeta(X)+2}|\cL_Y J_{\Lb}|^2+r^{2\zeta(X)+2}(|\cL_Y J_L |^2 +|\cL_Y\J |^2)+r^{2\zeta(X)}|\cL_Y F|^2\right)|\phi|^2\nn\\
&\les \dn r^{2\zeta(X)-\frac{1}{2}+\ep}u_+^{\f12 -\ga_0}(u_+^{-2}|\a[\cL_Y \tilde{F}]|^2 +r^{-2}|\ud{\cL_Y\tilde F}|^2+r^{-2}|\si[\cL_Y \tilde F]|^2)\nn\\
&+\dn u_+^{2\zeta(X)+\frac{5}{2}-\ga_0}r^{-\frac{5}{2}+\ep}(u_+^{-2}|\rho[\Lie_Y  \tilde{F}]|^2+r^{-2}|\ab[\Lie_Y \tilde{F}]|^2)\label{12.07.1.17}\\
&+\left(u_+^{2\zeta(X)+2}|\cL_Y J_{\Lb}|^2+r^{2\zeta(X)+2}(|\cL_Y J_L |^2 +|\cL_Y\J |^2)\right)|\phi|^2\nn\\
&+u_+^{2\zeta(XY)}r^{-2}q_0^2 (|D\phi|^2+r^{-2}|\phi|^2)\nn,
\end{align}
where the terms in (\ref{12.07.1.17}) can be incorporated into those above the line.

Recall from (\ref{12.07.2.17}) that
\begin{align*}
|\cL_Y J_{\mu}[\phi]|\les |D_Y\phi||D_\mu \phi|+|\phi||D_\mu D_Y\phi|+|F_{Y \mu}||\phi|^2.
\end{align*}
By using Proposition \ref{prop:decay:scal}, we can  therefore  bound that
\begin{align*}
&\left(u_+^{2\zeta(X)+2}|\cL_Y J_{\Lb}|^2+r^{2\zeta(X)+2}(|\cL_Y J_L |^2 +|\cL_Y\J |^2)\right)|\phi|^2\\
&\les \dn^2 r^{\ep-\frac{7}{2}}u_+^{2\zeta(X)-2\ga_0-\frac{3}{2}}(|D_Y\phi|^2+u_+^\frac{3}{2} r^{-\f12}|D D_Y\phi|^2)+\dn u_+^{2\zeta(X)}\I^2_Y[F] u_+^{-\ga_0}r^{-3}|\phi|^2,
\end{align*}
where the definition and the treatment of $\I_Y[F]$ can be found in  (\ref{1.23.1.17}).
The above  estimate together with the bootstrap assumptions implies  that
\begin{align*}
&\iint_{\mathcal{D}_{u_1}^{-u_2}} r^{1+\ep}u_+^{1+\ep}|Q(\cL_Y F, \phi, X)|^2 \\
&\les \dn\iint_{\mathcal{D}_{u_1}^{-u_2}}u_+^{\frac{3}{2}+\ep-\ga_0}r^{2\zeta(X)+\f12+2\ep}(u_+^{-2}|\a[\cL_Y \tilde F]|^2 +r^{-2}|\ud{\cL_Y \tilde F}|^2+r^{-2}|\si[\cL_Y \tilde F]|^2)\\
&+\dn \iint_{\mathcal{D}_{u_1}^{-u_2}}r^{2\ep-\frac{5}{2}}u_+^{2\zeta(X)-2\ga_0-\frac{1}{2}+\ep}(|D_Y\phi|^2+u_+^{\frac{3}{2}} r^{-\f12}|D D_Y\phi|^2)+ u_+^{-\ga_0-1+2\ep+2\zeta(XY)}|\phi|^2 \displaybreak[0]\\
&+q_0^2\iint_{\mathcal{D}_{u_1}^{-u_2}} u_+^{2\zeta(XY)+2\ep}(|D\phi|^2+r^{-2} |\phi|^2)\\
&\les (\dn^2+\E_{0,\ga_0}) (u_1)_+^{2\zeta(X Y)+1-2\ga_0+3\ep}+q_0^2\iint_{\mathcal{D}_{u_1}^{-u_2}} u_+^{2\zeta(XY)+2\ep}|D\phi|^2,
\end{align*}
where we used $\ga_0>1+3\ep$ to bound the term of $\ud{\Lie_Y \tilde F}$. We also used Proposition \ref{prop:Energyflux:decay:0} for deriving the last inequality.

Thus  we proved
\begin{equation*}
\iint_{\mathcal{D}_{u_1}^{-u_2}} r^{1+\ep}u_+^{1+\ep}|Q(\cL_X F, \phi, Y)|^2\les  (\dn^2+\E_{0,\ga_0}) (u_1)_+^{2\zeta(Z^k)+1-2\ga_0+3\ep}+q_0^2\iint_{\mathcal{D}_{u_1}^{-u_2}} u_+^{2\zeta(Z^k)+2\ep}|D\phi|^2.
\end{equation*}

  Now we  treat the first three terms on the right of (\ref{12.7.6.17}) by applying (\ref{12.7.4.17}) to $Q(F, D_Z^{l_1}\phi, Z^{l_2})$, with some  $0\le l_1\le  l_2\le 1$. Note that $[X,Y]\in \Ga$ since $X, Y\in \Ga$. With $(Z^{l_1}, Z^{l_2})=(X, Y), (Y, X), (Z^0, [X,Y])$, we derive from (\ref{12.7.4.17}) that
\begin{equation*}
\begin{split}
\iint_{\mathcal{D}_{u_1}^{-u_2}} r^{1+\ep}u_+^{1+\ep}|Q(F, D_Z^{l_1}\phi, Z^{l_2})|^2& \les \dn^2(u_1)_+^{2\zeta(Z^2)-1-\ga_0+2\ep}+q_0^2\iint_{\mathcal{D}_{u_1}^{-u_2}}{u_+}^{2\zeta(Z^2)+2\ep} |D \phi|^2\\
&+q_0^2 \iint_{\mathcal{D}_{u_1}^{-u_2}}u_+^{2\zeta(Z^2)+2\ep} \sum_{Z\in \{X, Y\}}u_+^{-2\zeta(Z)} |D D_Z \phi|^2,
\end{split}
\end{equation*}
where we used the fact that the sum of signatures of each $(Z^{l_1},Z^{l_2})$  satisfies $\zeta(Z^{l_1})+\zeta(Z^{l_2})=\zeta(XY)$, which is $\zeta(Z^2)$.
Combining the above two estimates with (\ref{12.7.5.17})  implies \eqref{eq:Q:need:r:all}. We thus finished the proof for Lemma \ref{prop:Q:need:r:all}.
\end{proof}

Now we are ready to prove Proposition \ref{prop:BT:imp:scal} and  Proposition \ref{1.27.3.18}.

\begin{proof}[Proof of Proposition \ref{prop:BT:imp:scal} and  Proposition \ref{1.27.3.18}] It suffices to consider the cases that $k=1,2$ for Proposition \ref{prop:BT:imp:scal}, since in Proposition \ref{prop:Energyflux:decay:0}, we  have completed the case $k=0$.

We apply the energy identity \eqref{eq:EEst:id:out} to $(\phi, \G)=(D_Z^k\phi, 0)$, which implies
\begin{align*}
&E[D_Z^k\phi](\H_{u_1}^{-u_2})+E[D_Z^k \phi](\Hb_{-u_2}^{u_1})\\
&\les \mathcal{E}_{k, \ga_0} (u_1)_+^{-\ga_0+2\zeta(Z^k)}+\iint_{\mathcal{D}_{u_1}^{-u_2}} (|F_{0\mu} ||J[D_Z^k \phi]^\mu|+|(\Box_A-1)D_Z^k\phi|| D_0 D_Z^k\phi|).
\end{align*}
We first  bound the nonlinear terms in the integral on the right hand side as
\begin{align*}
&|F_{0\mu} ||J[D_Z^k \phi]^\mu|+|(\Box_A-1)D_Z^k\phi|| D_0 D_Z^k\phi|\\
&\les  r^{-1-\ep}| D D_Z^k\phi|^2+r^{1+\ep}|(\Box_A-1)D_Z^k\phi|^2+r^{1+\ep}|F_{0\mu}|^2|D_Z^k\phi|^2.
\end{align*}
 The integral of the first term can be controlled by the fluxes on $\H_{u_1}^{-u_2}$ and $\Hb_{-u_2}^{u_1}$.  For the last term, due to $\zeta(\partial_t)=-1$, we apply (\ref{12.7.3.17}) to obtain $
|F_{0\mu}|^2\les (\dn+q_0^2) r^{-2}u_+^{-2}.
$
We can bound that
\begin{align*}
\iint_{\mathcal{D}_{u_1}^{-u_2}} r^{1+\ep} |F_{0\mu}|^2 |D_Z^k \phi|^2 \les  \iint_{\mathcal{D}_{u_1}^{-u_2}} (\dn+q_0^2) u_+^{-3+\ep} |D_Z^k \phi|^2.
\end{align*}
Thus we can obtain
\begin{equation}\label{1.27.2.18}
\begin{split}
E[D_Z^k\phi](\H_{u_1}^{-u_2})&+E[D_Z^k\phi](\Hb_{-u_2}^{u_1})\les (\mathcal{E}_{k, \ga_0}+\dn^2) (u_1)_+^{-\ga_0+2\zeta(Z^k)}\\
&+\int_{u_2}^{u_1}E[D_Z^k\phi](\H_u^{-u_2}) u_+^{-1-\ep}  du+\int_{-u_1}^{-u_2} v^{-1-\ep} E[D_Z^k\phi](\Hb_v^{u_1}) d v\\
&+\iint_{\mathcal{D}_{u_1}^{-u_2}} r^{1+\ep} |(\Box_A-1) D_Z^k \phi|^2.
\end{split}
\end{equation}
The last term will be estimated by using Lemma \ref{prop:Q:need:r:all}.  We consider (\ref{eq:BT:imp:energy:scal}) for the case $k=1$ and $Z^1=\pa$. In view of (\ref{1.27.2.18}), we have
\begin{align*}
E[D_Z\phi](\H_{u_1}^{-u_2})&+E[D_Z\phi](\Hb_{-u_2}^{u_1})\les (\mathcal{E}_{1, \ga_0}+\dn^2) (u_1)_+^{-\ga_0-2}\\
&+\int_{u_2}^{u_1}E[D\phi](\H_u^{-u_2})( u_+^{-2+2\ep} +u_+^{-1-\ep}) du+\int_{-u_1}^{-u_2} v^{-1-\ep} E[D_Z\phi](\Hb_v^{u_1}) d v.
\end{align*}
By taking all $Z\in \{\pa\}$ and using Lemma \ref{1.6.4.18}, we can obtain
\begin{equation}\label{1.27.1.18}
E[D\phi](\H_{u_1}^{-u_2})+E[D\phi](\Hb_{-u_2}^{u_1})\les (\mathcal{E}_{1, \ga_0}+\dn^2) (u_1)_+^{-\ga_0-2}.
\end{equation}
We can substitute (\ref{1.27.1.18}) to (\ref{eq:Q:need:r:all}) to control the estimate of $D\phi$.  This also completed the case $k=1$ of Proposition \ref{1.27.3.18}.

 Let us  consider the case $k=2$ and $Z^2=\pa Z$ with $Z\in \Ga$.
 Due to $\zeta(\pa Z)=-1+\zeta(Z)$ , by using (\ref{eq:Q:need:r:all}) and (\ref{1.27.2.18}) we can derive
\begin{equation}\label{1.27.4.18}
\begin{split}
&E[D D_Z\phi](\H_{u_1}^{-u_2})+E[DD_Z\phi](\Hb_{-u_2}^{u_1})\les (\mathcal{E}_{2, \ga_0}+\dn^2) (u_1)_+^{-\ga_0+2\zeta(\pa Z)}\\
&+\int_{u_2}^{u_1}u_+^{2\zeta(\pa Z)+2\ep} (u_+^2 |DD_{\pa} \phi|^2+u_+^{-2\zeta(Z)}|D D_Z\phi|^2) du\\
&+\int_{u_2}^{u_1}u_+^{-1-\ep} E[D D_Z \phi](\H_u^{-u_2})du+\int_{-u_1}^{-u_2} v^{-1-\ep} E[D D_Z\phi](\Hb_v^{u_1}) d v .
\end{split}
\end{equation}
By summing over $Z\in \{\pa\}$, applying Lemma \ref{1.6.4.18} gives (\ref{eq:BT:imp:energy:scal}) for all $Z^2=Z_1 Z_2$ with $Z_1, Z_2\in \{\pa\}$.
We then let $Z\in \{\Omega_{\mu \nu}\}$ in (\ref{1.27.4.18}). By substituting the estimate  (\ref{eq:BT:imp:energy:scal}) for $Z^2=\pa\pa$ into (\ref{1.27.4.18}), also by using Lemma \ref{1.6.4.18},  we  obtain the following energy flux estimate
\begin{equation*}
E[D D_Z\phi](\H_{u_1}^{-u_2})+E[D D_Z\phi](\Hb_{-u_2}^{u_1})\les (\mathcal{E}_{2, \ga_0}+\dn^2) (u_1)_+^{-\ga_0+2\zeta(\pa Z)}, \forall Z=\Omega_{ij}, \Omega_{0j}.
\end{equation*}
Thus we proved (\ref{eq:BT:imp:energy:scal}) for $Z^2=\pa Z,\, Z\in \Ga$. Substituting the result back to Lemma \ref{prop:Q:need:r:all} yields
\begin{align*}
\iint_{\mathcal{D}_{u_1}^{-u_2}} &r^{1+\ep}u_+^{1+\ep}|(\Box_A-1)D_Z^k\phi|^2 \les (\E_{2,\ga_0}+ \dn^2)(u_1)_+^{2\zeta(Z^k)+1-\ga_0}\\
&+q_0^2 (\E_{2,\ga_0}+\dn^2)\int_{u_2}^{u_1} u_+^{2\zeta(Z^k)+2\ep-\ga_0-2} du
\end{align*}
for any  $Z^k \in \Ga^k, k=2$. By direct integration, we can obtain Proposition \ref{1.27.3.18}.

 (\ref{eq:BT:imp:energy:scal}) with  the general $Z^k\in \Ga^k$ can be proved by applying (\ref{1.27.2.18})  with the help of (\ref{1.22.1.17}) and Lemma \ref{1.6.4.18}.

To prove (\ref{eq:BT:imp:pwe:scal}), we  apply  the energy identity \eqref{pWescaout} with $(\phi, \G)=(D_Z^k\phi, 0)$  to derive
\begin{equation}\label{12.7.7.17}
\begin{split}
\int_{\H_{u_1}^{-u_2}}&r^{-1}|D_L(rD_Z^k\phi)|^2+\int_{\Hb_{-u_2}^{u_1} }r(|\D D_Z^k\phi|^2+|D_Z^k\phi|^2)\\
\les &\mathcal{E}_{k, \ga_0} (u_1)_+^{1-\ga_0+2\zeta(Z^k)}+\iint_{\mathcal{D}_{u_1}^{-u_2}}|D_Z^k\phi|^2\\
&+\iint_{\mathcal{D}_{u_1}^{-u_2}} (r |F_{L\mu}|| J[D_Z^k \phi]^\mu|+|(\Box_A-1)D_Z^k\phi| |D_L(rD_Z^k\phi)|) .
\end{split}
\end{equation}
 The second term is estimated by using \eqref{eq:BT:imp:energy:scal} as follows
\begin{align*}
\iint_{\mathcal{D}_{u_1}^{-u_2}} |D_Z^k\phi|^2 \les (\mathcal{E}_{k, \ga_0} +\dn^2)(u_1)_+^{1-\ga_0+2\zeta(Z^k)},\, \,  k\le 2.
\end{align*}
Next we treat the nonlinear terms on the right hand side of (\ref{12.7.7.17}).  Note that
$
r^{2}J[D_Z^k\phi]=J[rD_Z^k\phi]$,
 and recall from Proposition \ref{prop:decay:pt:Max} that
$$
 |\rho|^2\les (\dn+q_0^2) r^{-2}u_+^{-2},\quad |\a|^2\les \dn r^{-2-\ga_0}u_+^{-1}.
$$
We then can bound that
\begin{align*}
&r |F_{L\mu}|| J[D_Z^k \phi]^\mu|+|(\Box_A-1)D_Z^k\phi| |D_L(rD_Z^k\phi)|\\
&\les r^{-1}u_+^{-1-\ep}|D_L(rD_Z^k\phi)|^2+  ru_+^{1+\ep}(|(\Box_A-1)D_Z^k\phi|^2+|\rho|^2|D_Z^k\phi|^2)\\
&\qquad +r^{-2-\ep}|\D(rD_Z^k\phi)|^2+r^{2+\ep}|\a|^2|D_Z^k\phi|^2\\
& \les r^{-1}u_+^{-1-\ep}|D_L(rD_Z^k\phi)|^2+ r^{-2-\ep}|\D(rD_Z^k\phi)|^2+ru_+^{1+\ep}|(\Box_A-1)D_Z^k\phi|^2+(\dn+q_0^2) u_+^{-2+\ep} |D_Z^k\phi|^2.
\end{align*}
 The integral of the the first two terms can be bounded by the weighted fluxes on $\H_{u_1}^{-u_2}$ and $\Hb_{-u_2}^{u_1}$. The integral of the last term can be bounded by using (\ref{eq:BT:imp:energy:scal}) as follows,
\begin{align*}
\iint_{\mathcal{D}_{u_1}^{-u_2}} (\dn+q_0^2) u_+^{-2+\ep} |D_Z^k\phi|^2 \les (\E_{k,\ga_0}+\dn^2) (u_1)_+^{2\zeta(Z^k)-1+\ep-\ga_0}.
\end{align*}
The commutator can be treated by using (\ref{1.22.1.17}). Combining all these estimates, by using Lemma \ref{1.6.4.18},  we have
$$
\int_{\H_{u_1}^{-u_2}}r^{-1} |D_L(rD_Z^k\phi)|^2+\int_{\Hb_{-u_2}^{u_1} }r(|\D D_Z^k\phi|^2+|D_Z^k\phi|^2) \les (\mathcal{E}_{k, \ga_0} +\dn^2) (u_1)_+^{1-\ga_0+2\zeta(Z^k)}.
$$
Note that $\int_{\H_{u_1}^{-u_2}} r^{-1} |D_Z^k \phi|^2 $ can be treated by  (\ref{eq:BT:imp:energy:scal}).
Thus we can obtain the $r$-weighted energy flux decay \eqref{eq:BT:imp:pwe:scal} for the scalar field.

\end{proof}

\subsection{Improving the bootstrap assumptions}
Let $C$ be the implicit constants in Proposition \ref{prop:BT:imp:Max} and Proposition \ref{prop:BT:imp:scal}. Without loss of generality we may assume $C\geq 2$. This ensures that $\dn\geq \mathcal{E}_{2, \ga_0}$. By our convention, the constant $C$ depends only on $\ep$, $\ga_0$ and $|q_0|$. Let $\mathcal{E}_{2, \ga_0}$, $\dn$ verify the following conditions:
\[
C\mathcal{E}_{2, \ga_0}= \f12 \dn,\quad C\dn\leq \f12,
\]
that is,
\[
\mathcal{E}_{2, \ga_0}\leq \frac{1}{4C^2}.
\]
Then we  have
\[
C(\mathcal{E}_{2, \ga_0}+\dn^2) \leq \dn.
\]
We thus have improved the bootstrap assumptions \eqref{eq:BT:energy}-\eqref{eq:BT:PWE:Max}.

Thus  in view of Proposition \ref{prop:decay:scal} and Proposition \ref{prop:decay:pt:Max},  we can obtain the pointwise estimates in Theorem \ref{m.thm}  part (1). In view of Proposition \ref{prop:BT:imp:Max} and Proposition \ref{prop:BT:imp:scal}, we can obtain the  set of estimates on energy fluxes in Theorem \ref{m.thm}   part (2).

\end{document}